\title{Bernoulli disjointness}
\author{Eli Glasner}
\address{Department of Mathematics\\
     Tel Aviv University\\
     Tel Aviv\\
     Israel}
\email{glasner@math.tau.ac.il}
\author{Todor Tsankov}
\address{
  Institut de Math{\'e}matiques de Jussieu--PRG \\
  Universit\'e Paris Diderot, case 7012 \\
  8, place Aur{\'e}lie Nemours \\
  75205 Paris \textsc{cedex} 13 \\
  France
  -- and --
  D{\'e}partement de math{\'e}matiques et applications \\
  {\'E}cole normale sup{\'e}rieure \\
  75005 Paris \\
  France}
\curraddr{Institut Camille Jordan \\
  Universit\'e Claude Bernard Lyon 1 \\
  Universit\'e de Lyon \\
  43, boulevard du 11 novembre 1918 \\
  69622 Villeurbanne \textsc{cedex} \\
  France}
\email{tsankov@math.univ-lyon1.fr}
\author{Benjamin Weiss}
\address {Institute of Mathematics\\
 Hebrew University of Jerusalem\\
 Jerusalem\\
 Israel}
\email{weiss@math.huji.ac.il}
\author{Andy Zucker}
\address{
  Institut de Math{\'e}matiques de Jussieu--PRG \\
  Universit\'e Paris Diderot, case 7012 \\
  8, place Aur{\'e}lie Nemours \\
  75205 Paris \textsc{cedex} 13 \\
  France}
\email{andrew.zucker@imj-prg.fr}
\setlist[enumerate,1]{label=(\roman*), font=\normalfont}
\newcommand{\fin}[1]{\mathrm{Fin}(#1)}
\newcommand{\fib}{\mathrm{fib}}
\newcommand{\fA}{\mathfrak{A}}
\newcommand{\fB}{\mathfrak{B}}
\DeclareMathOperator{\Vis}{Vis}
\date{January 2019} 
\subjclass[2010]{Primary 37B05; Secondary 37B10, 54H20}
\keywords{disjointness, minimal flows, Bernoulli flow, proximal flows, strongly irreducible subshifts}
\begin{document}

\begin{abstract}
 Generalizing a result of Furstenberg, we show that for every infinite discrete group $G$, the Bernoulli flow $2^G$ is disjoint from every minimal $G$-flow. From this, we deduce that the algebra generated by the minimal functions $\fA(G)$ is a proper subalgebra of $\ell^\infty(G)$ and that the enveloping semigroup of the universal minimal flow $M(G)$ is a proper quotient of the universal enveloping semigroup $\beta G$. When $G$ is countable, we also prove that for any metrizable, minimal $G$-flow, there exists a free, minimal flow disjoint from it and that there exist continuum many mutually disjoint minimal, free, metrizable $G$-flows. Finally, improving a result of Frisch, Tamuz, and Vahidi Ferdowsi and answering a question of theirs, we show that if $G$ is a countable icc group, then it admits a free, minimal, proximal flow.
\end{abstract}

\maketitle

\setcounter{tocdepth}{1} 
\tableofcontents

\section{Introduction}
\label{sec:introduction}

Let $G$ be an infinite discrete group, whose identity element we denote by either $e_G$ or just $e$. 
A \df{$G$-flow} (or a \df{$G$-dynamical system}) is a compact Hausdorff space $X$ equipped with an action of $G$ by homeomorphisms. We will often denote the fact that $G$ acts on $X$ by $G \actson X$. A \df{subflow} of $X$ is a closed, $G$-invariant subset. A $G$-flow $X$ is called \df{topologically transitive} if for any two non-empty open subsets $U, V \sub X$ there is a $g \in G$ with 
$gU \cap V \not=\emptyset$.
When $X$ is metrizable this is equivalent to the requirement that there is a \df{transitive point};
i.e. a point $x \in X$ whose orbit $Gx$ is dense.
By Baire's category theorem in a metric topologically transitive flow the set of transitive points is a dense $G_\delta$
subset of $X$.
The flow $X$ is called \df{minimal} if every orbit is dense or, equivalently, if $X$ has no proper subflows.

A central object in symbolic dynamics that one can construct for every discrete group $G$ is the \df{Bernoulli flow} $2^G := \{0,1\}^G$; the action of $G$ is given by $(g \cdot z)(h) = z(hg)$. Sometimes one also considers more general finite alphabets instead of $2 = \set{0, 1}$. The Bernoulli flow is often called the \df{Bernoulli shift} and its subflows are called \df{subshifts} or sometimes, \df{flows of finite type}.

The fundamental concept of disjointness of two dynamical systems, which is central to this paper, was introduced by Furstenberg in his seminal work \cite{Furstenberg1967}. Two $G$-flows $X$ and $Y$ are called \df{disjoint} (denoted by $X \perp Y$) if the only subflow of $X \times Y$ that projects onto $X$ and $Y$ is $X \times Y$ itself. For $X$ and $Y$ to be disjoint, at least one of them has to be minimal and if both of them are minimal, they are disjoint iff $X \times Y$ is minimal.

In \cite{Furstenberg1967}, Furstenberg showed, among many other beautiful results, that for the group of integers $\Z$, the Bernoulli flow $2^\Z$ is disjoint from every minimal flow and then applied this  to prove his famous Diophantine theorem: if $\Sigma$ is a non-lacunary semigroup of integers and $\alpha$ is an irrational, then $\Sigma\alpha$ is dense in the circle $\R/\Z$. The recent paper~\cite{Huang2018} characterizes the topologically transitive $\Z$-flows which are disjoint from all minimal flows.

Furstenberg also studied the smallest class of subshifts which contains all minimal subshifts and is closed under taking products, subflows and factors. Using the disjointness result mentioned above, he proved that this class does not contain the Bernoulli flow \cite{Furstenberg1967}*{Theorem~III.5} and conjectured \cite{Furstenberg1967}*{p.~41} that a similar result should hold if one starts with all minimal flows rather than the minimal subshifts. This was confirmed in \cite{Glasner1983} for $G = \Z$ and in this paper, we prove it for all discrete groups.

A more compact way to state that the Bernoulli flow $2^G$ is not a factor of a subflow of a product of minimal flows is to say that the closed algebra $\fA(G) \sub \ell^\infty(G)$ generated by the \df{minimal functions} is a proper subalgebra of $\ell^\infty(G)$. We explain this equivalence in Section~\ref{sec:min-functions}. 

We can state our main theorem as follows.
\vspace{1 mm}

\begin{theorem}
  \label{th:intro:disjoint-bernoulli}
  Let $G$ be an infinite discrete group. Then the following hold:
  \begin{enumerate}
  \item The Bernoulli flow $2^G$ is disjoint from every minimal $G$-flow.
    
  \item $\fA(G) \neq \ell^\infty(G)$.
  \end{enumerate}
\end{theorem}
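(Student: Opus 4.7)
The plan is to prove (i) directly; part (ii) follows from the equivalence in Section~\ref{sec:min-functions} between disjointness of $2^G$ from all minimal flows and non-triviality of $\ell^\infty(G) / \fA(G)$. For (i), fix a minimal $G$-flow $M$ and a closed $G$-invariant subset $Z \subseteq 2^G \times M$ with $\pi_1(Z) = 2^G$; minimality of $M$ automatically forces $\pi_2(Z) = M$. The goal is $Z = 2^G \times M$, equivalently that every orbit closure in $2^G \times M$ is a rectangle $\overline{Gz} \times M$.

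The strategy is to find a single point $(z^*, m^*) \in Z$ whose $G$-orbit is dense in $2^G \times M$: closedness and $G$-invariance of $Z$ would then force $Z = 2^G \times M$. Transitivity of $(z^*, m^*)$ amounts to the joint-recurrence condition that, for every cylinder $[\alpha] \subseteq 2^G$ on a finite $F \subseteq G$ and every open $W \subseteq M$, the intersection $\{g \in G : (gz^*)|_F = \alpha\} \cap \{g \in G : gm^* \in W\}$ is nonempty. The second factor is syndetic by minimality of $M$, so the task is to couple Bernoulli-side return sets to syndetic sets arising from the dynamics of $M$. For countable $G$ and metrizable $M$, I would attempt an iterative Baire-category construction inside $Z$: enumerate a countable base of pairs $([\alpha], W)$ and produce an approximating sequence $(z_n, m_n) \in Z$ that satisfies one further joint-recurrence condition at each stage while remaining in $Z$, letting a limit yield $(z^*, m^*) \in Z$. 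The strong irreducibility of the Bernoulli shift (trivially, patterns on disjoint supports glue) supplies the basic flexibility to freely modify candidate points on finite windows of $G$.

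The principal obstacle is that no fixed $z^* \in 2^G$ can satisfy a ``universal'' joint-recurrence property against all syndetic subsets of $G$---one sees this by considering cylinders $[\alpha]$ on supports $F$ with $F^{-1}F$ nontrivial, for which the complement of the return set is automatically syndetic---so the construction must be genuinely tailored to $M$, carried out inside $Z$, coupling modifications of $z$ to the corresponding motion of $m$ permitted by $Z$. Making this coupling work---that is, showing that for each target $([\alpha], W)$ the set of $(z,m) \in Z$ admitting $g$ with $gz \in [\alpha]$ and $gm \in W$ is dense in $Z$---is the technical heart of the argument; the uncountable case presumably demands a further reduction to countable subgroups or an ultrafilter/Ellis-semigroup argument in $\beta G$, which is where I expect the delicate work to lie.
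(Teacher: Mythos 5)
Your reduction is sound as far as it goes: since the transitive points of $2^G$ are dense, any joint $Z$ with full projections contains a point $(z_0,m_0)$ with $z_0$ transitive, and it suffices to show every such pair has dense orbit in $2^G\times M$. This is exactly how the paper sets things up (Proposition~\ref{p:SDOP-equiv}). But the proposal stops precisely where the theorem begins. The step you defer as ``the technical heart''---showing that for each cylinder $[\alpha]$ and each open $W\subseteq M$ one can steer $(z_0,m_0)$ into $[\alpha]\times W$---is the entire content, and the Baire-category scheme you sketch does not supply it: density of the relevant sets in $Z$ requires controlling \emph{which} $m$ sits over a given modification of $z$, and $Z$ gives you no such control a priori. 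What is needed is a purely combinatorial property of the minimal flow $M$ itself, which the paper isolates as the \emph{separated covering property}: for every finite $D\subseteq G$ and every nonempty open $U\subseteq M$ there is a set $S\subseteq G$ with the translates $Ds$, $s\in S$, pairwise disjoint and $S^{-1}U=M$. Given this, one defines a single consistent pattern $\beta$ on $DS$ by $\beta(ds)=\alpha(d)$, uses transitivity of $z_0$ to find $h$ with $hz_0\in N_\beta$, and then uses the covering $S^{-1}U=M$ to find $s\in S$ with $shm_0\in U$; the element $g=sh$ does the job. Your observation that no universal joint-recurrence point exists is correct, and the SCP is exactly the $M$-tailored substitute.

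Proving that \emph{every} minimal flow of \emph{every} group has the SCP is where the real work lies, and nothing in your outline points toward it. The paper first shows the SCP propagates through extensions and factors, so it suffices to exhibit one essentially free minimal flow with the SCP; it then splits on the FC~center of $G$: if (after quotienting by a finite normal subgroup) $G$ has an infinite maximally almost periodic normal subgroup, a co-induced equicontinuous action does the job, while in the icc case one invokes the Frisch--Tamuz--Vahidi~Ferdowsi construction of faithful minimal proximal flows, upgrades it to essentially free via strong irreducibility, and uses the disjointness of proximal flows from incontractible ones. The uncountable case is then obtained by an inverse-limit reduction to countable subgroups. None of these ingredients (nor substitutes for them) appear in the proposal, so as it stands there is a genuine gap rather than an alternative proof.
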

\vspace{1 mm}

When $G$ is countable, Theorem~\ref{th:intro:disjoint-bernoulli}, together with some techniques from the theory of subshifts and Baire category methods, allows us to produce a multitude of disjoint minimal flows for every group $G$. While for specific groups, it is often not difficult to produce disjoint flows (for example, distal flows are always disjoint from proximal flows, and for the group of integers the collection of circle rotations provides a continuum of pairwise disjoint 
minimal flows), it had remained elusive to do this for all groups.

Recall that a flow $G \actson X$ is called \df{essentially free} if for every $g \in G$, the closed set $\set{x \in X : g \cdot x = x}$ has empty interior. When $G$ is countable and the flow is minimal, one can use the Baire category theorem to show that this is equivalent to the existence of a \df{free point} (that is, a point $x \in X$ such that the map $G \ni g \mapsto g \cdot x$ is injective), in fact, a dense $G_\delta$ set of free points. 
A $G$-flow is called \df{free} if every point is free. For example, the Bernoulli flow $2^G$ is essentially free but it is not free.

\vspace{1 mm}

\begin{theorem}
  \label{th:intro:free-disjoint}
  Let $G$ be a countable, infinite group. Then the following hold:
  \begin{enumerate}
  \item \label{i:thi:free-disjoint-1} For every minimal, metrizable $G$-flow $X$, there exists a minimal, metrizable, and free $G$-flow $Y$ such that $X \perp Y$.
    
  \item \label{i:thi:free-disjoint-2} There exist continuum many mutually disjoint, free, metrizable, minimal $G$-flows.
  \end{enumerate}
\end{theorem}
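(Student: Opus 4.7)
My plan is to realize each desired free minimal flow $Y$ as a minimal subshift of some Bernoulli flow $k^G$ (for $k$ a finite alphabet), and to leverage Theorem~\ref{th:intro:disjoint-bernoulli} together with a Baire category argument to secure disjointness from $X$. For part~(i), I would first produce a Polish parameter space $\mathcal{P} \subseteq k^G$ of points $y$ whose orbit closures $\overline{Gy}$ are minimal and free. This should be done by a symbolic-dynamical construction generalizing the Toeplitz construction to an arbitrary countable group, using an exhaustion of $G$ by finite sets to build a hierarchy of nested patterns: minimality of $\overline{Gy}$ is arranged by syndeticity of return-times, and freeness by ensuring that sufficiently many distinguishing finite patterns appear along every orbit.

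The crux of part~(i) is to show that, for a fixed minimal metrizable $G$-flow $X$, the set $\{y \in \mathcal{P} : X \perp \overline{Gy}\}$ is comeager in $\mathcal{P}$. Since $X$ and $\overline{Gy}$ are both minimal, the disjointness $X \perp \overline{Gy}$ is equivalent to minimality of $X \times \overline{Gy}$, a condition that can be reformulated as a countable intersection of open conditions on $y$ (one for each pair of non-empty basic open sets in $X \times k^G$ meeting $X \times \overline{Gy}$). Density of each such open condition is where Theorem~\ref{th:intro:disjoint-bernoulli} enters: because $X \perp k^G$, for every $x \in X$ the orbit of $(x, z)$ in $X \times k^G$ is dense whenever $z$ has dense $G$-orbit, and one must refine this ``global'' density to a local statement valid after any prescribed finite-pattern constraint on $y$. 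The main obstacle will be carrying out the requisite perturbation of $y$ so as to simultaneously preserve membership in $\mathcal{P}$, since minimality and freeness are the constraints defining $\mathcal{P}$; getting the Bernoulli-disjointness and the Toeplitz-type combinatorics to cooperate is where the real work lies.

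For part~(ii), I plan to first obtain countably many pairwise disjoint flows by induction, maintaining the stronger invariant that the product $Y_1 \times \cdots \times Y_n$ is minimal. Applying part~(i) to $X = Y_1 \times \cdots \times Y_n$ then yields $Y_{n+1}$ free, minimal, and disjoint from this product, so that $Y_1 \times \cdots \times Y_{n+1}$ is again minimal. To jump from countably to continuum many flows, I would run a two-parameter version of the Baire category argument in $\mathcal{P} \times \mathcal{P}$, showing that $\{(y_1, y_2) \in \mathcal{P}^2 : \overline{G y_1} \perp \overline{G y_2}\}$ is comeager, and then apply a Kuratowski--Mycielski-type theorem to extract a perfect set $P \subseteq \mathcal{P}$ such that $\overline{G y} \perp \overline{G y'}$ for all distinct $y, y' \in P$.
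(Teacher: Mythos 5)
Your high-level strategy (Baire category in a space of subshifts, with Bernoulli disjointness supplying density and a Mycielski argument supplying the continuum) matches the paper's, but there are two genuine gaps. The first is exactly the step you flag as ``where the real work lies'': the density of the disjointness conditions. The paper does not run the category argument in a space of points whose orbit closures are minimal and free; it runs it in $\mathcal{S}(A^{\N})$, the Vietoris closure of the \emph{strongly irreducible} subshifts of $A^{\N\times G}$. In that space, density of ``disjoint from $X$'' is automatic, because every strongly irreducible subshift is disjoint from every minimal flow (Lemma~\ref{l:SDOP-implies-disj-str-irred} together with Theorem~\ref{th:main-BDJ}, via the SCP) and the strongly irreducible subshifts are dense by definition; density of minimality is a separate surgery argument (Proposition~\ref{p:S-minimal-comeager}). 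In your space $\mathcal{P}$ there is no analogous mechanism: Bernoulli disjointness is a statement about orbit closures of pairs $(x,z)$ in $X\times k^G$, not about which \emph{minimal} subshifts are disjoint from $X$, and perturbing a Toeplitz-type point to meet a prescribed disjointness condition while preserving minimality and freeness of its orbit closure is essentially as hard as the theorem itself. Without a concrete replacement for strong irreducibility, the density claim is unsupported and the argument risks circularity.

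The second gap is freeness. Generically one only gets \emph{essential} freeness in the symbolic setting (Proposition~\ref{p:ess-free-general}); strongly irreducible subshifts typically contain non-free points, and neither the paper nor your proposal shows that genuinely free minimal subshifts are dense in any space where disjointness from $X$ is also dense. The paper upgrades essential freeness to freeness by a different device: pass to the universal highly proximal extension $\tS_G(Y)$ (the Gleason cover), where Frol\'ik's theorem on extremally disconnected spaces forces fixed-point sets to be clopen and hence empty, then descend to a metrizable free factor (Proposition~\ref{p:EssFreeToFree}); highly proximal extensions preserve minimality and disjointness (Proposition~\ref{p:highly-proximal-basic}), so the disjointness obtained in the symbolic category survives. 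You should either incorporate this upgrade or supply the (nontrivial) genericity of freeness in your parameter space. Your outline of part (ii) is fine modulo the above --- though the countable induction is subsumed by the Kuratowski--Ulam/Mycielski step, which the paper applies directly to $n$-tuples of essentially free minimal subshifts.
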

\vspace{1 mm}

Note that the condition that $X$ be metrizable is essential in item \ref{i:thi:free-disjoint-1} above. The universal minimal flow $M(G)$ (which is never metrizable for infinite, discrete $G$) is not disjoint from any minimal flow. In particular, this shows that if $T$ is a topological group for which $M(T)$ is metrizable (and there is an abundance of those, see, for example, \cite{Glasner2002a} or \cite{Pestov2006}), then there is no hope for a result analogous to Theorem~\ref{th:intro:free-disjoint} to hold. However, the analogue of Corollary~\ref{c:intro:env-semigroup} below is known to be true for some such groups (see~\cite{Bartosova2019a}).

We apply Theorem~\ref{th:intro:free-disjoint} to characterize the underlying space of the universal minimal flow $M(G)$ whenever $G$ is a countable, infinite group, generalizing results of~\cite{Balcar1990} and~\cite{Turek1991}. The definition of the \emph{Gleason cover} ``Gl'' appears at the end of Section~\ref{sec:bern-disj-count}.
\vspace{1 mm}

\begin{cor}
	\label{cor:intro:MGSpace}
	Let $G$ be a countable, infinite group. Then $M(G)\cong \mathrm{Gl}(2^\mathfrak{c})$.
\end{cor}
\vspace{1 mm}

Another way to say that some dynamic behaviors cannot be captured by the minimal flows uses the theory of the enveloping semigroup. For every topological group, there is a universal enveloping semigroup; if $G$ is discrete, this is just $\beta G$, the Stone--\v{C}ech compactification of $G$. Every topological group $T$ also admits a \df{universal minimal flow} $M(T)$, a minimal flow that maps onto every other minimal $T$-flow; for a discrete $G$, one can take, for example, any minimal subflow of $\beta G$. Then one can ask whether the canonical map from the universal enveloping semigroup to the enveloping semigroup of $M(G)$ (see Section~\ref{sec:min-functions} for the definition) is an isomorphism. 
Often attributed to Ellis and sometimes called the ``Ellis problem'', this question appears in Auslander (\cite{Auslander1988}, p.~120) and de Vries (\cite{Vries1993}, p.~391), and a negative answer was conjectured by Pestov (\cite{Pestov1998}, p.~4163) for all non-precompact topological groups. For discrete groups, this question is equivalent to the question whether $\fA(G)$ coincides with  $\ell^\infty(G)$ (see Section~\ref{sec:min-functions}), and thus in the second item of Theorem~\ref{th:intro:disjoint-bernoulli} we confirm the conjecture in this case.
\vspace{1 mm}

\begin{cor}
  \label{c:intro:env-semigroup}
  Let $G$ be an infinite discrete group and let $M(G)$ be its universal minimal flow. Then the canonical map from $\beta G$ to the enveloping semigroup of $M(G)$ is not an isomorphism.
\end{cor}
\vspace{1 mm}

Next we discuss briefly our strategy to prove Theorem~\ref{th:intro:disjoint-bernoulli}. First we reduce Theorem~\ref{th:intro:disjoint-bernoulli} to the \emph{existence} of an essentially free, minimal flow with certain combinatorial properties (that we call the separated covering property, SCP in short). Then for most of the paper, we assume that $G$ is countable and depending on its algebraic structure, we use different ideas to produce this flow. Recall that a group $G$ is \df{maximally almost periodic (maxap)} if it admits an injective homomorphism to a compact group; $G$ is \df{icc} if every element of $G$ other than $e_G$ has an infinite conjugacy class. We show that, up to taking a quotient by a finite normal subgroup, every group either admits an infinite maxap normal subgroup or is icc. In the first case, we use the equicontinuous action of the maxap subgroup to produce a free, minimal $G$-flow with
the SCP. In the second, we use a recent breakthrough of Frisch, Tamuz, and Vahidi Ferdowsi \cite{Frisch2019}, who constructed, for every icc group $G$, a faithful, metrizable, proximal flow. We improve \emph{faithful} to \emph{essentially free} and then show that every proximal flow has the SCP. 
We then indicate how the theorem for uncountable groups follows from the countable case.

In Section~\ref{sec:from-essent-freen}, we develop a general method to construct free minimal flows from essentially free ones, preserving many important properties (notably, disjointness). Using it, we prove the following theorem, answering a question asked in \cite{Frisch2019}.
\vspace{1 mm}

\begin{theorem}
  \label{th:intro:free-proximal}
  Every countable icc group admits a free, metrizable, proximal flow.
\end{theorem}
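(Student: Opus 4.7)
The plan is to start from the flow constructed by Frisch, Tamuz, and Vahidi Ferdowsi \cite{Frisch2019} and upgrade it in two stages: first from \emph{faithful} to \emph{essentially free}, and then from \emph{essentially free} to \emph{free}. The second upgrade is supplied by the general construction developed in Section~\ref{sec:from-essent-freen}, which (as the introduction promises) preserves the main structural properties of a minimal flow. Thus the heart of the argument is producing an essentially free, metrizable, minimal, proximal $G$-flow.

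First I would invoke the FTV theorem to fix a faithful, metrizable, minimal, proximal $G$-flow $X_0$. Faithfulness guarantees that for every $g \neq e$ some point of $X_0$ is moved by $g$, but the fixed-point set $F_g := \set{x \in X_0 : gx = x}$ may still have nonempty interior. To improve faithful to essentially free I would exploit the icc hypothesis: for each $g \neq e$ the conjugacy class of $g$ is infinite, and $F_{hgh^{-1}} = h \cdot F_g$. A natural move is to pass to a well-chosen minimal subflow of a countable product (or countable coinduction) of copies of $X_0$, designed so that for every nontrivial $g$ the new fixed-point set is forced to be nowhere dense by the presence of infinitely many translates $h_n \cdot F_g$ appearing on different coordinates. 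Products and subflows of proximal flows are proximal (given a proximal pair in each factor, compose the two witnessing elements of $\beta G$), and countable products of metrizable flows are metrizable, so the resulting $X_1$ is essentially free, metrizable, minimal, and proximal.

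Second, I would feed $X_1$ into the free-ification construction of Section~\ref{sec:from-essent-freen}. By the preservation properties stated there, the output $Y$ is a free, metrizable, minimal $G$-flow; since proximality descends to factors and is compatible with the type of combinatorial extension used to eliminate the remaining fixed-point interiors, $Y$ is also proximal, which gives the desired flow.

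The main obstacle I expect is the first step. Passing from faithful to essentially free while preserving both minimality and proximality is delicate, because the most obvious routes to freeness (e.g.\ equicontinuous or Bernoulli-type extensions) destroy proximality. The icc hypothesis is precisely what provides enough conjugates of each fixed-point set, but assembling them inside a single minimal proximal flow so that their joint interior is empty will require a careful choice of countable product together with a minimality/Baire argument inside it. The free-ification step, by contrast, may be treated as a black box from Section~\ref{sec:from-essent-freen}.
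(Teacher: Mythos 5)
Your overall architecture --- faithful $\to$ essentially free $\to$ free, with the last upgrade delegated to Section~\ref{sec:from-essent-freen} --- matches the paper's, and your treatment of that last step is fine: Proposition~\ref{p:EssFreeToFree} yields a free, metrizable, highly proximal extension, and Proposition~\ref{p:highly-proximal-basic} guarantees it remains minimal and proximal. The gap is exactly where you flag it, in the passage from faithful to essentially free, and the construction you sketch does not close it. Knowing that $F_{hgh^{-1}} = hF_g$ and that $g$ has infinitely many conjugates gives infinitely many translates of $F_g$, but in a minimal flow this is no constraint: if $F_g$ has non-empty interior, then already $\bigcup_h h\,\mathrm{int}(F_g) = X_0$, so the translates may jointly cover everything and nothing forces any of them to be small. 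In a product of conjugate-twisted copies of $X_0$, the fixed-point set of $g$ is a product of translates of $F_g$, and after passing to an uncontrolled minimal subflow there is no reason its interior should be empty; the diagonal-type points, which are free by faithfulness, need not survive in the minimal subflow you extract. So the ``minimality/Baire argument inside the product'' that you defer is the entire content of the step, and it is not clear it can be made to work in this form.

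The paper does not manipulate the Frisch--Tamuz--Vahidi Ferdowsi flow directly. It uses Theorem~\ref{th:FTVF} in its genericity form: the minimal, proximal, faithful subshifts form a dense $G_\delta$ subset of the space $\cS(A)$ of limits of strongly irreducible subshifts. Separately, Proposition~\ref{p:ess-free} shows that for an icc group \emph{every} non-constant strongly irreducible subshift is already essentially free --- this is where icc is really used: given $g \neq e$ and a cylinder $N_\alpha$ with base $F$, one chooses a conjugate so that $h$, $hg$, and $F$ are pairwise far apart and uses strong irreducibility to plant a disagreement between the coordinates $h$ and $hg$ inside $N_\alpha$. Hence essential freeness is also a dense $G_\delta$ condition on $\cS(A)$ (Corollary~\ref{c:ess-free-minimal}), and intersecting the two comeager sets produces the essentially free, minimal, proximal, metrizable flow of Corollary~\ref{c:ExistsEssentiallyFree} that feeds into your second step. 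To complete your proof you should either carry out this Baire-category argument or supply a genuinely different construction for the first upgrade; as written, that step is missing.
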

In fact, one can produce even continuum many mutually disjoint such flows (cf. Remark~\ref{rem:disjoint-proximal}).

In Section~\ref{sec:min-functions}, we study the algebra $\mathfrak{A}(G)$ generated by the minimal functions and prove Corollary~\ref{c:intro:env-semigroup}. In Section~\ref{sec:small-sets}, generalizing the results of \cite{Glasner1983} to arbitrary countable groups, we characterize the \df{interpolation sets} for the algebra $\fA(G)$. Finally, in Section~\ref{sec:ManyAuts}, we use our methods to produce for every countable group minimal flows with large groups of automorphisms.

Throughout the paper, $G$ denotes an infinite, discrete group. It is also assumed to be countable in Sections \ref{sec:sparse-cover-prox}, \ref{sec:some-prop-clos}, \ref{sec:bern-disj-count}, \ref{sec:small-sets}, and \ref{sec:ManyAuts}.

\subsubsection*{Addendum}
After learning about our results and after this paper had been circulated, Anton Bernshteyn~\cite{Bernshteyn2019p} found a different proof of the fact that the Bernoulli flow is disjoint from minimal flows using the Lovász Local Lemma.

Theorem~\ref{th:CantorAuts} has been improved by the fourth author in \cite{Zucker2019p}: there it is shown that for any two countable groups $G$ and $H$ with $G$ infinite, there exists a minimal $G$-flow on the Cantor space such that $H$ embeds in its automorphism group.

\subsection*{Acknowledgements}
\label{sec:acknowledgements}
We are grateful to Dana Barto\v{s}ov\'{a} 
whose talk  ``On a problem of Ellis and Pestov's conjecture'' held in Prague in the summer of 2016 rekindled our interest in this problem as well as Omer Tamuz for explaining some of the arguments in \cite{Frisch2019}. We would also like to thank the anonymous reviewers for a careful reading of the paper. Research on this project was partially supported by the NSF grant no.\ DMS 1803489, the ANR grants AGRUME (ANR-17-CE40-0026) and GAMME (ANR-14-CE25-0004), and Institut Universitaire de France. 
\vspace{2 mm}


\section{Bernoulli disjointness and the separated covering property}
\label{sec:bern-disj-sparse}

In this section, we reduce the problem of showing that all minimal flows of a group $G$ are disjoint from the Bernoulli flow to the existence of an essentially free $G$-flow with a certain combinatorial property.

If $D \sub G$ is finite, we will say that two sets $E_1, E_2 \sub G$ are \df{$D$-apart} if $D E_1\cap D E_2 = \emptyset$. We will say that a subset $E \sub G$ is \df{$D$-separated} if all of its elements are $D$-apart, i.e., $D g\cap D h = \emptyset$ for all $g, h \in E, \ g\neq h$.

\vspace{1 mm}

\begin{defn}
  \label{df:sparse-covering}
  Let $X$ be a minimal $G$-flow.
We will say that $X$ has the \df{separated covering property}, or the \emph{SCP},  
if  for every finite $D \sub G$ and every open, non-empty $U \sub X$, there exists a $D$-separated $S \sub G$ such that $S^{-1} U = X$.

We remark that by the compactness of $X$, $S$ can be taken to be finite.  
\end{defn}
\vspace{1 mm}

If $X$ is a $G$-flow, $x \in X$, and $U \sub X$, we will denote the visiting times of $x$ to $U$ by
\begin{equation*}
  \Vis(x, U) = \set{g \in G : g \cdot x \in U}.  
\end{equation*}

Recall that a point $x \in X$ is called \df{minimal}
if $\cl{G \cdot x}$ is a minimal flow. 
A subset $S \sub G$ is called \df{syndetic} if there is a finite set $F \sub G$ with $FS =G$. If $F\subseteq G$ is a finite set for which $FS = G$, we sometimes say that $S$ is \emph{$F$-syndetic}.
The first statement of the following lemma goes back to Gottschalk and Hedlund \cite{Gottschalk1955}.

\begin{lemma}\label{l:syndetic}
\begin{enumerate}
\item
A point $x \in X$ of a $G$-flow is minimal iff 
$\Vis(x, U)$ is syndetic for every open neighborhood $U$ of $x$.
\item
A subset $S \sub G$ is syndetic iff the subflow $\overline{G\cdot 1_S}$ of $2^G$ 
does not contain the constant function zero.
\item
A maximal $F$-separated set $L$ is $F^{-1}F$-syndetic.
\end{enumerate}
\end{lemma}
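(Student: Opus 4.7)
The three items are essentially independent, and I would treat them in order, expecting (i) to require the most care.

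For (i), this is the classical Gottschalk--Hedlund criterion. For the forward direction, assume $\overline{G\cdot x}$ is minimal and let $U\ni x$ be open. Then $U\cap\overline{G\cdot x}$ is a nonempty relatively open subset of the minimal flow, so by minimality together with compactness there exist $g_1,\dots,g_n\in G$ with $\overline{G\cdot x}\subseteq\bigcup_i g_i U$. For every $h\in G$ some $g_i^{-1}h$ then lies in $\Vis(x,U)$, so $G=\{g_1,\dots,g_n\}\cdot\Vis(x,U)$. For the converse, suppose toward a contradiction that $Y\subsetneq\overline{G\cdot x}$ is a nonempty closed $G$-invariant subset not containing $x$; choose an open $U\ni x$ with $\overline{U}\cap Y=\emptyset$ and a finite $F$ with $F\cdot\Vis(x,U)=G$. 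Pick any $y\in Y$ and a net $g_i\cdot x\to y$, and factor $g_i=f_i s_i$ with $f_i\in F$, $s_i\in\Vis(x,U)$. Passing to a subnet on which $f_i$ is constantly some $f\in F$, we have $s_i\cdot x\to f^{-1}\cdot y\in Y$; but $s_i\cdot x\in U$ forces $f^{-1}\cdot y\in\overline{U}$, contradicting $\overline{U}\cap Y=\emptyset$.

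For (ii), I would first compute the action explicitly: $(g\cdot 1_S)(h)=1_S(hg)=1_{Sg^{-1}}(h)$. Thus the constant zero function lies in $\overline{G\cdot 1_S}$ exactly when there is a net $(g_i)$ such that for every $h\in G$, eventually $h\notin Sg_i^{-1}$. If $S$ is $F$-syndetic then $F\cap Sg^{-1}\neq\emptyset$ for every $g$, and the finiteness of $F$ rules out simultaneous pointwise vanishing on $F$, so no such net exists. Conversely, if $S$ is not syndetic then for each finite $F\subseteq G$ I pick $g_F\in G\setminus FS$, so that $F\cap Sg_F^{-1}=\emptyset$; the net $(g_F)$, indexed by the finite subsets of $G$ directed by inclusion, produces translates of $1_S$ converging pointwise to $0$.

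Finally, (iii) is a one-line maximality argument: if $g\in G\setminus F^{-1}FL$, then $Fg\cap F\ell=\emptyset$ for every $\ell\in L$, so $L\cup\{g\}$ remains $F$-separated, contradicting maximality of $L$. The only mild subtlety across the whole lemma is in the $(\Leftarrow)$ direction of (i), where one must shrink $U$ so that $\overline{U}$ (not merely $U$) misses $Y$ before applying the syndeticity hypothesis; everything else is straightforward bookkeeping.
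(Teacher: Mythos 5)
Your proof of item (iii) is exactly the paper's argument (the paper proves only that item, treating (i) and (ii) as standard facts going back to Gottschalk--Hedlund), and your proofs of (i) and (ii) are the correct standard ones, including the necessary care in shrinking $U$ so that $\overline{U}$ misses $Y$ in the converse of (i). The only blemish is a cosmetic left/right slip in (ii): with the paper's conventions $FS=G$ and $(g\cdot 1_S)(h)=1_S(hg)$, one has $g\cdot 1_S=1_{Sg^{-1}}$ and the relevant nonvanishing window is $F^{-1}\cap Sg^{-1}$ rather than $F\cap Sg^{-1}$ (equivalently, replace $F$ by $F\cup F^{-1}$ throughout), which does not affect the argument.
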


\begin{proof}
We prove only the last statement.
Let $g \in G$ be given. If $g \not \in F^{-1}FL$ then $Fg \cap F L = \emptyset$, whence
$L' = L\cup \{g\}$ is an $F$-separated set which properly contains $L$; a contradiction.
Thus $F^{-1}FL = G$. 
\end{proof}

In the next proposition, we will show that the SCP is exactly the dynamical property needed to prove that a minimal flow is disjoint from the Bernoulli flow. We need the following lemma.
\vspace{1 mm}

\begin{lemma}
	\label{l:BernoulliDensePoint}
	The Bernoulli flow $2^G$ has a free point whose orbit is dense.
\end{lemma}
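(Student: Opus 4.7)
The plan is to construct a point $z \in 2^G$ by transfinite recursion, simultaneously ensuring that its orbit meets every basic open set (giving dense orbit) and that $z$ fails to be right-invariant under any non-identity element (giving freeness). Recall that $(g \cdot z)(h) = z(hg)$, so $g \cdot z = z$ if and only if $z(hg) = z(h)$ for every $h \in G$.

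Let $\kappa = |G|$. Enumerate the basic clopen sets of the form $[F, w] := \{y \in 2^G : y|_F = w\}$, with $F \subseteq G$ finite and $w : F \to \{0,1\}$, as $([F_\alpha, w_\alpha])_{\alpha < \kappa}$; there are indeed $\kappa$ such sets since $G$ is infinite. Enumerate $G \setminus \{e\}$ as $(c_\alpha)_{\alpha < \kappa}$. Observe that $g \cdot z \in [F_\alpha, w_\alpha]$ if and only if $z(h g) = w_\alpha(h)$ for every $h \in F_\alpha$; thus, to make $G \cdot z$ dense it suffices to produce, for each $\alpha$, an element $g_\alpha \in G$ with $z(h g_\alpha) = w_\alpha(h)$ for $h \in F_\alpha$, and to make $z$ non-invariant under $c_\alpha$ it suffices to produce $h_\alpha \in G$ with $z(h_\alpha) \neq z(h_\alpha c_\alpha)$.

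I build $(g_\alpha)_{\alpha < \kappa}$ and $(h_\alpha)_{\alpha < \kappa}$ by transfinite recursion while maintaining the invariant that the sets $F_\beta g_\beta$ and $\{h_\beta, h_\beta c_\beta\}$ for $\beta < \alpha$ are pairwise disjoint subsets of $G$. At stage $\alpha$, the ``committed'' set $C_\alpha := \bigcup_{\beta < \alpha} \bigl( F_\beta g_\beta \cup \{h_\beta, h_\beta c_\beta\} \bigr)$ has cardinality at most $|\alpha| \cdot \aleph_0 < \kappa$. First, pick $g_\alpha \in G$ outside the forbidden set $F_\alpha^{-1} C_\alpha$, which still has cardinality $< \kappa$. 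Let $C'_\alpha := C_\alpha \cup F_\alpha g_\alpha$, and then pick $h_\alpha \in G$ outside $C'_\alpha \cup C'_\alpha c_\alpha^{-1}$, again of cardinality $< \kappa$. Finally define $z : G \to \{0,1\}$ by $z(f g_\alpha) := w_\alpha(f)$ for each $\alpha$ and $f \in F_\alpha$, $z(h_\alpha) := 0$ and $z(h_\alpha c_\alpha) := 1$ for each $\alpha$, and $z(x) := 0$ at every other $x$; the definition is consistent by disjointness. By construction $g_\alpha \cdot z \in [F_\alpha, w_\alpha]$, so $G \cdot z$ is dense; and $(c_\alpha \cdot z)(h_\alpha) = z(h_\alpha c_\alpha) = 1 \neq 0 = z(h_\alpha)$, so $c_\alpha \cdot z \neq z$ for every $\alpha$. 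Hence $z$ is free with dense orbit.

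The step requiring most care is the cardinal arithmetic in the uncountable case, which uses that a union of fewer than $\kappa$ finite sets has cardinality less than $\kappa$ for every infinite cardinal $\kappa$. Beyond that the construction is routine; for countable $G$ one can alternatively invoke Baire category, as the dense-orbit points form a comeager set in the Polish flow $2^G$ and so do the free points (each set $\{z : c \cdot z = z\}$ is closed with empty interior, and there are only countably many $c \in G \setminus \{e\}$), so their intersection is nonempty.
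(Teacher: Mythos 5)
Your proof is correct and follows essentially the same strategy as the paper's: a transfinite recursion of length $|G|$ that reserves pairwise disjoint finite blocks of $G$, some to realize each basic cylinder pattern under a suitable translate (density) and some to place a pair $h_\alpha, h_\alpha c_\alpha$ with differing values (freeness). The only differences are bookkeeping (you enumerate cylinder sets individually rather than handling all patterns on a given finite set at one stage, and you set $z=0$ off the reserved blocks rather than extending arbitrarily), and your closing cardinality remark correctly supplies the one fact the recursion needs.
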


\begin{proof}
Let $|G| = \kappa$, 
let $\{g_{\alpha +1} : \alpha < \kappa\}$ be an enumeration of $G \setminus \{e\}$,
and let $\mathcal{P}_f(G) = \{F_{\alpha +1}: \alpha < \kappa\}$ list the finite subsets of $G$. 
We will build $z\in 2^G$ free with dense orbit in $\kappa$-many stages, defining for each $\alpha \leq \kappa$ a function $z_\alpha \colon S_\alpha\to 2$ for some 
$S_\alpha\subseteq G$, with $S_\alpha\subseteq S_\beta$ for $\alpha < \beta$. 
When $\alpha < \kappa$, we will have $|S_\alpha| < \kappa$. 
Set $z_0 = \emptyset$, so also $S_0 = \emptyset$. 
At limit stages, set $z_\alpha = \bigcup_{\beta < \alpha} z_\beta$. 
Suppose $z_\alpha$ is defined for $\alpha < \kappa$. Letting $s_1, \ldots, s_n$ 
enumerate $2^{F_{\alpha +1}}$, find 
$h_{\alpha +1} \in G$ and
$g_{s_i}\in G$ with 
$\{F_{\alpha+1}  g_{s_1}, \ldots, F_{\alpha+1}  g_{s_n}, \{h_{\alpha +1}\}, \{h_{\alpha +1}g_{\alpha +1}\}, S_\alpha\}$ 
pairwise disjoint; this is possible because $|S_\alpha|< \kappa$. 

Set $S_{\alpha+1} = 
\bigcup\{F_{\alpha+1}  g_{s_1}, \ldots, F_{\alpha+1}  g_{s_n}, \{h_{\alpha +1}\}, \{h_{\alpha +1}g_{\alpha +1}\}, S_\alpha\}$. 
For $f\in F_{\alpha+1}$ and 
$i \leq n$, set $z_{\alpha+1}(fg_{s_i}) = s_i(f)$,
and $z_{\alpha+1}(h_{\alpha+1})=0,\  z_{\alpha+1}(h_{\alpha +1}g_{\alpha +1}) =1$. 
Continue until $z_\kappa$ is defined, then let $z \colon G\to \{0,1\}$ be any function extending $z_\kappa$. Since $z$ extends $z_{\alpha+1}$, we have that $g_{s_i}\cdot z|_{F_{\alpha+1}} = s_i$ and that $g_{\alpha+1}\cdot z(h_{\alpha+1})\neq z(h_{\alpha+1})$. It follows that $z$ is a free point with dense orbit as desired. 
\end{proof}
\vspace{1 mm}

\begin{remark}
When $G$ is countable and $X$ is metrizable, topologically transitive and essentially free, then 
both the sets $X_{\mathrm{tr}}$ and $X_{\mathrm{free}}$, of transitive and free points respectively,
are dense $G_\delta$ sets, hence so is $X_{\mathrm{tr}} \cap X_{\mathrm{free}}$.
In particular, this is true for the flow $X =2^G$.
\end{remark}
\vspace{1 mm}

For the next proposition, we introduce some notation. Given a finite $D\subseteq G$ and some $\alpha\in 2^D$, we let
 \begin{equation*}
N_\alpha = \set{z \in 2^G : z|_D = \alpha}
\end{equation*}
be the corresponding basic open neighborhood in $2^G$.
\vspace{1 mm}

\begin{prop}
  \label{p:SDOP-equiv}
  Let $X$ be a minimal $G$-flow. Then the following are equivalent:
  \begin{enumerate}
  \item 
For every finite $D\subseteq G$, there is a $D$-separated $S\subseteq G$ so that for every $x\in X$, $S\cdot x\subseteq X$ is dense.    
\item \label{i:SDOP-equiv:2} 
$X$ has the SCP.
  \item $X \perp 2^G$.
  \end{enumerate}
\end{prop}
\begin{proof}
  \begin{cycprf}
 \item[\impnext] This is clear.
    
  \item[\impnext] 
  
  To prove that $X \perp 2^G$, it suffices to show that for every $z \in 2^G$ with dense orbit and every $x \in X$, $\cl{G \cdot (x, z)} = X \times 2^G$. So fix $z_0\in 2^G$ with dense orbit and $x_0 \in X$. We also fix a finite $D \sub G$ and some $\alpha \in 2^D$. Letting $U \sub X$ be non-empty open, it suffices to find $g \in G$ such that $g \cdot z_0 \in N_\alpha$ and $g \cdot x_0 \in U$. By the hypothesis, there exists a finite $D$-separated set $S \sub G$ with $S^{-1}U = X$. As $S$ is $D$-separated, the sets $\set{Ds : s \in S}$ are pairwise disjoint, thus we can define $\beta \in 2^{DS}$ by $\beta(ds) = \alpha(d)$ for $d \in D, s \in S$. Let $h \in G$ be such that $h \cdot z_0 \in N_\beta$. In particular, this implies that $sh \cdot z_0 \in N_\alpha$ for all $s \in S$. Finally, choose $s \in S$ such that $sh \cdot x_0 \in U$. Now it is easy to check that $g = sh$ works.
  
\item[\impfirst] 
Fix $D\subseteq G$ finite and symmetric. Let $z_0 \in 2^G$ be a free point with a dense orbit. Let $V \ni z_0$ be open such that the sets $\set{dV : d \in D^2}$ are pairwise disjoint. This implies that $\Vis(z_0, V)$ is $D$-separated. For any $x \in X$, $\cl{G\cdot (x, z_0)}\subseteq X\times 2^G$ is a subflow with full projections; as $X \perp 2^G$, this implies that  $\cl{G\cdot (x, z_0)} = X\times 2^G$. It follows that $\Vis(z_0, V) \cdot (x, z_0) \subseteq X\times V$ is dense, so also $\Vis(z_0, V)\cdot x\subseteq X$ is dense.
  \end{cycprf}
\end{proof}
\vspace{1 mm}

\begin{cor}\label{cor:perp}
Let $X$ be a minimal $G$-flow and $Y$ a $G$-flow which has a free transitive point
(in particular, if $G$ is countable, one can take $Y$ which is minimal and essentially free).
If $X \perp Y$, then $X$ has the SCP.
\end{cor}

\begin{proof}
Apply the argument in the implication (iii) $\Rightarrow$ (i) of Proposition \ref{p:SDOP-equiv}.
\end{proof}
  
Notice that the SCP is inherited by factors: if $X$ and $Y$ are minimal, $\pi \colon X\to Y$ is a $G$-map, and $X$ has the SCP, then so does $Y$. The next proposition will show that for a fixed group $G$, if there is one essentially free minimal flow with the SCP, then any minimal $G$-flow has the SCP.
\vspace{1 mm}

When $f\colon X \to Y$ is a map and $A \subset X$, we sometimes write $f[A]$ for $\{f(x) : x \in A\}$. We are going to use the following standard fact: if $\pi \colon X \to Y$ is a $G$-map between minimal $G$-flows, then $\pi$ is \df{quasi-open}, i.e., $\pi[U]$ has non-empty interior for any non-empty, open $U \sub X$. To see this, let $V \sub \cl{V} \sub U$ with $V$ open, non-empty. Then by minimality, there is a finite $F \sub G$ such that $FV = X$, implying that $F \pi[\cl{V}] \supseteq F \pi[V] = Y$. This implies that the closed set $\pi[\cl{V}] \sub \pi[U]$ must have non-empty interior.

\begin{prop}
  \label{p:extensions}
  Let $\pi \colon X \to Y$ be a $G$-map between the minimal $G$-flows $X$ and $Y$ with $Y$ essentially free. Then if $Y$ has the SCP, so does $X$.
\end{prop}

\begin{proof}
  Let $D \sub G$ be given and let $V \sub X$ a non-empty open subset. As $\pi$ is quasi-open, $\Int{\pi[V]}$ is non-empty and we can replace $V$ by $\pi^{-1}(\Int{\pi[V]}) \cap V$. We then have (for the new $V$) that $\pi[V] = U \sub Y$ is non-empty and open. As $Y$ is essentially free, by shrinking $U$ and $V$ if necessary, we may further assume that the sets $\{d U : d \in D\}$ are pairwise disjoint.

  Pick $x_0 \in V$ and let $y_0 = \pi(x_0)$. By minimality, for each $x \in \pi^{-1}(y_0)$, there is an element $g_x \in G$ and an open $B_x \ni x$ such that $g_xB_x \sub V$. By compactness, there exist finitely many elements $x_1, \ldots, x_n \in \pi^{-1}(y_0)$, open sets $B_1, \ldots, B_n$, and elements $g_1, \ldots, g_n \in G$ such that for each $i$, we have that $x_i \in B_i$, $g_i B_i \sub V$, and $\bigcup_i B_i \supseteq \pi^{-1}(y_0)$.

  Note that the set $F = \{g_1, \ldots, g_n\}$ is $D$-separated.  Indeed, if $g_i\neq g_j$ and $d_i g_i = d_j g_j$ for some $d_i, d_j \in D$, then $d_i\neq d_j$ and $d_i g_i y_0 = d_j g_j y_0$. However, both $g_i y_0$ and $g_j y_0$ are in $U$ (as $g_ix_i, g_j x_j \in V$) and this contradicts the fact that $d_i U \cap d_j U = \emptyset$.

  Let $W \ni y_0$ be open and small enough so that $\pi^{-1}(W) \sub \bigcup_i B_i$; one can take $W = Y\setminus \pi[X\setminus \bigcup_i B_i]$. Use the SCP for $Y$ to find a $DF$-separated set $S$ such that $S^{-1}W = Y$. As $F^{-1}V \supseteq \bigcup_i B_i \supseteq \pi^{-1}(W)$, we now have that $S^{-1}F^{-1}V = X$. As the set $FS$ is $D$-separated, this completes the proof.
\end{proof}
\vspace{1 mm}

\begin{cor}
  \label{c:one-SDOP-enough}
If the group $G$ admits some minimal, essentially free flow with the SCP, then all minimal $G$-flows have the SCP.
\end{cor}

\begin{proof}
 Let $G \actson Y$ be a minimal essentially free $G$-flow with the SCP. 
 Let $X$ be any minimal $G$-flow. Let $Z \sub X \times Y$ be a minimal subflow.
 Then, as $Z$ extends $Y$, it has the SCP by Proposition \ref{p:extensions}.
 On the other hand $X$ is a factor of $Z$ and therefore it inherits the SCP.  
\end{proof}
\vspace{2 mm}


\section{Separated covering for maxap groups}
\label{sec:sparse-cover-maxap}

Recall that a $G$-flow $X$ is \emph{equicontinuous} if for every non-empty open $U\subseteq X\times X$ containing the diagonal, there is another open $V\subseteq X\times X$ containing the diagonal such that for any $(x, y)\in V$ and any $g\in G$, we have $(gx, gy)\in U$. 
A typical example of an equicontinuous action is when $G$ is a subgroup of a compact group $K$ and acts on $K$ by left multiplication. In the presence of equicontinuity, it is not difficult to show the separated covering property.
\vspace{1 mm}

\begin{lemma}
  \label{l:infinite-equic}
  Let $G \actson X$ be a minimal $G$-flow and suppose that there is an infinite subgroup $H \leq G$ such that the action $H \actson X$ is equicontinuous. Then $G \actson X$ has the SCP.
\end{lemma}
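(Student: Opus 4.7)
The plan is to leverage the equicontinuity of $H$ to perturb a finite cover of $X$ by $G$-translates of a small open set into a $D$-separated one. Fix a finite $D \sub G$ and a non-empty open $U \sub X$, and pick $x_0 \in U$.

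First, I would establish an equicontinuity lemma: there exist open neighborhoods $V, U_1 \ni x_0$ with $V \sub U$ and $\overline{U_1} \sub U$, such that for every $h \in H$ and every $x \in V$, the condition $h \cdot x_0 \in U_1$ forces $h \cdot x \in U$. This falls out of applying the equicontinuity of $H$ to the open neighborhood $W := (X \times U) \cup ((X \setminus \overline{U_1}) \times X)$ of the diagonal in $X \times X$: one obtains an open diagonal neighborhood $V_0$ with $(h \cdot a, h \cdot b) \in W$ for all $h \in H$ and $(a,b) \in V_0$, and then $V := U \cap \{x : (x_0, x) \in V_0\}$ works. By minimality of $G \actson X$ and compactness, I can then find a finite $F \sub G$ with $F^{-1} V = X$.

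The step I expect to demand the most care is showing that $T := \{h \in H : h \cdot x_0 \in U_1\}$ is infinite. My approach is to pass to the compact topological group $K := \overline{H} \sub \mathrm{Homeo}(X)$ (closure in the topology of uniform convergence, compact by Arzel{\`a}--Ascoli thanks to equicontinuity), inside which $H$ is dense. Either the orbit $K \cdot x_0 = \overline{H \cdot x_0}$ is infinite, in which case (being a homogeneous $K$-space for the compact group $K$) it has no isolated points, so the non-empty open set $U_1 \cap K \cdot x_0$ is infinite, pulls back to an infinite open set in $K$, and meets the dense subgroup $H$ in an infinite subset of $T$; or $K \cdot x_0$ is finite, in which case $H \cdot x_0$ is finite, $\mathrm{Stab}_H(x_0)$ has finite index in the infinite group $H$ and is thus infinite, and $\mathrm{Stab}_H(x_0) \sub T$.

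The last step is a routine combinatorial spreading: enumerate $F$ and inductively pick $h_f \in T$ for each $f \in F$ so that $\{h_{f'} f' : f' \text{ chosen so far}\}$ remains $D$-separated. At each stage, the set of $h \in H$ making $D h f$ intersect some previously placed $D h_{f'} f'$ has size at most $|D|^2 |F|$, while $T$ is infinite, so a valid choice always exists. Setting $S := \{h_f f : f \in F\}$, for any $x \in X$ one picks $f \in F$ with $f \cdot x \in V$, and the equicontinuity lemma (using $h_f \in T$, i.e., $h_f \cdot x_0 \in U_1$) yields $h_f f \cdot x \in U$. Thus $S^{-1} U = X$ and $X$ has the SCP.
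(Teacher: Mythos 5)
Your proof is correct and follows essentially the same route as the paper's: use equicontinuity at $x_0$ to produce a neighbourhood $V$ so that returns of $x_0$ to a small set push $V$ into $U$, cover $X$ by finitely many translates of $V$ via minimality of the $G$-action, and then perturb each translate by an element of the infinite return set to achieve $D$-separation. The only difference is in how the infinitude of the return set is justified: the paper invokes recurrence (equicontinuous actions are pointwise minimal, so the visiting times are syndetic in the infinite group $H$), whereas you argue via the enveloping compact group together with a perfectness/stabilizer dichotomy --- both are valid.
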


\begin{proof}
  Let $U \sub X$ be non-empty, open and let a finite $D \sub G$ be given. Since the action $H\actson X$ is equicontinuous, it is pointwise minimal, i.e.\ $\overline{H\cdot x}$ is a minimal $H$-flow for each $x\in X$. Fix $x_0 \in U$. Equicontinuity of the $H$-action allows us to find an open $V \ni x_0$, $V \sub U$ such that for all $h \in H$, if $h \cdot x_0 \in V$, then $hV \sub U$. Now recurrence implies that the set $R = \set{h \in H : h^{-1}V \sub U}$ is infinite. 
  Let $F \sub G$ be finite with $FV = X$. Write $F = \set{f_0, \ldots, f_{n-1}}$ and find $h_0, \ldots, h_{n-1} \in R$ such that the set $\set{f_0h_0, \ldots, f_{n-1}h_{n-1}}^{-1}$ is $D$-separated. Finally, $\bigcup_{i<n} f_ih_i U \supseteq FV = X$.
\end{proof}
\vspace{1 mm}

Recall that a group $G$ is called \df{maximally almost periodic (maxap)} if admits an injective homomorphism into a compact group (or, equivalently, a free equicontinuous flow). For example, residually finite groups and abelian groups are maxap. The main result of this section is the following.
\vspace{1 mm}

\begin{prop}

  \label{p:normal-maxap}
  Suppose that $G$ admits an infinite, normal subgroup $H$ which is maxap. Then $G$ admits a free, minimal flow with the SCP.
\end{prop}
\begin{proof}
  Let $H \actson K$ be a free, equicontinuous flow and let $G/H \actson Y$ be any free flow. Let $s \colon G/H \to G$ be a section for the quotient map $\pi \colon G \to G/H$ (i.e., such that $\pi \circ s = \id_{G/H}$) with $s(H) = e_G$ and define the cocycle $\rho \colon G \times G/H \to G$ 
 by $\rho(g, w) = s(gw)^{-1} g s(w)$. Define the \df{co-induced action} $G \actson K^{G/H}$ by:
  \begin{equation*}
    (g \cdot x)(w) = \rho(g^{-1}, w)^{-1} \cdot x(g^{-1}w).
  \end{equation*}
  It is easy to check that the restriction of this action to $H$ is equicontinuous as the action on each coordinate is the composition of the original action $H \actson K$ with an automorphism of $H$. Finally consider the product flow $G \actson Y \times K^{G/H}$ ($G$ acts on $Y$ via $\pi$) and let $Z$ be any minimal subflow. The flow $Z$ is free and satisfies the hypothesis of Lemma~\ref{l:infinite-equic}.
\end{proof}
\vspace{2 mm}


\section{Separated covering for proximal flows}
\label{sec:sparse-cover-prox}

Next we turn our attention to proximal flows, which are diametrically opposed to the equicontinuous flows considered in the previous section. We recall the definition below; the interested reader can consult \cite{Glasner1976} for more details.
\begin{defn}
  \label{defn:prox}
\begin{enumerate}
\item
Two points $x_1, x_2$ in a flow $G \actson X$ are called \df{proximal} if there are $y\in X$ and a net $g_i$ from $G$ with $g_ix_1\to y$ and $g_ix_2\to y$. Notice that if $x_1, x_2\in X$ are proximal and $X$ is minimal, then for any $y\in X$, we can find a net $g_i$ in $G$ as above. The flow $X$ is called \df{proximal} if all pairs of points in $X$ are proximal. 
When $X$ is proximal and minimal, then for any finite collection $x_1, \ldots, x_n\in X$ and any $y\in X$, there is a net $g_i$ from $G$ with $g_ix_k\to y$ for each $k \leq n$. 
An extension $\pi \colon X \to Y$ is \df{proximal} if all points $x_1, x_2$ with $\pi(x_1) = \pi(x_2)$ are proximal. A proximal extension of a proximal flow is always proximal.

\item
A $G$-flow $X$ is \df{strongly proximal} or a \df{boundary} 
if for every probability measure $\mu$ on $X$
there is a net $g_i \in G$ and a point $z \in X$ with
$\lim g_i \cdot \mu = \delta_z$. Clearly every strongly proximal flow is proximal.

\item
Every topological group $G$ admits a unique universal minimal proximal flow denoted $\Pi(G)$,
and  a unique universal minimal strongly proximal flow denoted $\Pi_s(G)$.
The latter is also called the \df{Furstenberg boundary} of $G$ and is sometimes denoted by $\partial_FG$.
\end{enumerate}
\end{defn}

For the remainder of this section, we assume that $G$ is countable. The groups for which $\Pi_s(G)$ is trivial are exactly the amenable groups (see \cite{Glasner1976}*{III.3.1. Theorem}). The groups for which $\Pi(G)$ is trivial are called \df{strongly amenable} and it was proved in \cite{Frisch2019} that those are exactly the groups with no icc quotients. Kalantar and Kennedy~\cite{Kalantar2017} showed that the action $G \actson \Pi_s(G)$ is free iff $G$ is \df{$C^*$-simple}, i.e., if the reduced $C^*$-algebra of $G$ is simple. Extending the results of \cite{Frisch2019}, we show below (cf. Section~\ref{sec:from-essent-freen}) that the action $G \actson \Pi(G)$ is free iff $G$ is icc.

Combining some known disjointness results and our techniques from Section~\ref{sec:bern-disj-sparse}, we can see that strongly proximal flows have the SCP. Recall first that minimal, strongly proximal flows are disjoint from minimal flows that admit an invariant measure (see \cite[Theorem III.6.1]{Glasner1976}). Next, by \cite{Weiss2012}, every group $G$ admits an essentially free minimal flow with an invariant measure. Finally, Corollary~\ref{cor:perp} implies that $\Pi_s(G)$ has the SCP. Thus, by Corollary~\ref{c:one-SDOP-enough}, if the action $G \actson \Pi_s(G)$ is free (i.e., if $G$ is $C^*$-simple), then all minimal actions of $G$ have the SCP. We do not provide more details because in Corollary~\ref{c:proximal-SDOP} we prove a more general result.

Recall that a point $x \in X$ is called \df{minimal} if $\cl{G \cdot x}$ is a minimal flow. 
We have the following basic fact.
\vspace{1 mm}

\begin{lemma}
  \label{l:alm-per-dense}
  Let $A$ be a finite alphabet. Then the minimal points in the Bernoulli flow $A^G$ are dense.
\end{lemma}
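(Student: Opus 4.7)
The plan is to exhibit, for every basic open set $N_\alpha\subseteq A^G$ with $D\subseteq G$ finite and $\alpha\in A^D$, a minimal point inside $N_\alpha$. First I would construct a well-chosen seed $z_0 \in N_\alpha$ whose return times to $N_\alpha$ form a syndetic subset of $G$; then I would pass to a minimal subflow of $\overline{G\cdot z_0}$ and use a compactness/pigeonhole argument to place a minimal point inside $N_\alpha$.

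For the construction, apply Zorn's lemma to obtain a maximal $D$-separated set $S\subseteq G$ with $e\in S$. By Lemma~\ref{l:syndetic}(iii), $S$ is $(D^{-1}D)$-syndetic, i.e., $(D^{-1}D)\cdot S = G$. Define $z_0 \in A^G$ by $z_0(ds) = \alpha(d)$ for $d\in D$ and $s\in S$ (well-defined by $D$-separation, since the sets $Ds$ are pairwise disjoint) and $z_0(g) = a_0$ for an arbitrary fixed $a_0\in A$ when $g\notin DS$. Since $e\in S$, $z_0|_D = \alpha$, hence $z_0\in N_\alpha$. A direct computation $(s\cdot z_0)(h) = z_0(hs) = \alpha(h)$ for $h \in D$ shows $s\cdot z_0 \in N_\alpha$ for every $s\in S$, so $S\subseteq \Vis(z_0, N_\alpha)$, which is therefore syndetic.

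Now let $X_0 = \overline{G\cdot z_0}$ and choose, by Zorn, a minimal subflow $M\subseteq X_0$; the goal is to show $M\cap N_\alpha \neq \emptyset$. Given $y\in M$, write $y = \lim g_n\cdot z_0$ along a net. Factor $g_n = f_n s_n$ with $f_n \in D^{-1}D$ and $s_n\in S$; by finiteness of $D^{-1}D$ and passage to a subnet, we may assume $f_n\equiv f$ for some fixed $f$. Then $g_n\cdot z_0 = f\cdot (s_n\cdot z_0) \in f\cdot N_\alpha$; since $N_\alpha$ (hence $f\cdot N_\alpha$) is clopen in $A^G$, the limit satisfies $y\in f\cdot N_\alpha$, so $f^{-1}\cdot y\in M\cap N_\alpha$ by $G$-invariance of $M$, and this is the desired minimal point.

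The main conceptual step is the construction of $z_0$: a careless extension of $\alpha$ can easily produce an orbit closure whose minimal subflows avoid $N_\alpha$ entirely (for instance, collapsing to a constant point, as happens if one naively extends $\alpha$ by a constant). The maximal $D$-separated tiling $S$ is exactly what forces $\Vis(z_0, N_\alpha)$ to be syndetic, which then drives the routine subnet argument in the final step.
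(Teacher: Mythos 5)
Your proof is correct and follows essentially the same route as the paper's: both build the seed point by filling in the pattern $\alpha$ along a maximal $D$-separated set (so that $\alpha$ occurs syndetically) and then conclude that every minimal subflow of the orbit closure must meet $N_\alpha$. Your final subnet/clopenness argument simply makes explicit a step the paper leaves to the reader.
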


\begin{proof}
  Let $F \sub G$ be finite and let $\alpha \in A^F$ be a function. 
  We will show that the open set $N_\alpha = \set{z \in A^G : z|_F = \alpha}$ contains a minimal point. 
  Let $C \sub G$ be a maximal $F$-separated subset of $G$. 
  Let $a_0 \in A$. Define $z \in A^G$ by
  \begin{equation*}
    z(g) =
    \begin{cases}
      \alpha(f) & \text{ if } g = fc \text{ with } f \in F, c \in C, \\
      a_0        & \text{ if } g \notin FC.
    \end{cases}
  \end{equation*}
By Lemma \ref{l:syndetic}, $C$ is syndetic and moreover $\alpha$ appears syndetically in 
every element of $\cl{G \cdot z}$.  
Hence any minimal subset of $\cl{G \cdot z}$ intersects $N_\alpha$.
\end{proof}
\vspace{1 mm}

The following definition comes from \cite{Glasner1975} (there it is phrased in a different but equivalent manner).
\vspace{1 mm}

\begin{defn}
  \label{df:incontractible}
  A $G$-flow $X$ is called \df{incontractible} if for every $n \in \N$, the minimal points in $X^n$ are dense.
\end{defn}
\vspace{1 mm}

The proposition below essentially follows from \cite{Glasner1975}*{Theorem~4.2}. There one imposes the additional condition that $X$ is minimal, which is not necessary. Here we present an elementary proof.
\vspace{1 mm}

\begin{prop}
  \label{p:proximal-disj-incontr}
  Let $X$ be an incontractible $G$-flow and let $Y$ be a minimal, proximal $G$-flow. Then $X \perp Y$.
\end{prop}

\begin{proof}
  Let $W \sub X \times Y$ be a subflow with full projections. We will show that there exists $y \in Y$ such that $X \times \set{y} \sub W$. As $Y$ is minimal, this will show that $W = X \times Y$, as required.

  First we will show that for every open cover $U_1, \ldots, U_k$ of $X$, there exists a point $y_0 \in Y$ such that $(U_j \times \set{y_0}) \cap W \neq \emptyset$ for all $j$. Let $(x_1, \ldots, x_k) \in U_1 \times \cdots \times U_k$ be minimal  and let $y_j \in Y$ be such that $(x_j, y_j) \in W$. As $Y$ is proximal, there exists a net $(g_i)$ of elements of $G$ and $z \in Y$ such that $\lim_i g_iy_j = z$ for all $j \leq k$. By passing to a subnet, we may assume that $g_ix_j$ converges for all $j$ and we put $x_j' = \lim_i g_ix_j$. As $(x_1, \ldots, x_k)$ is minimal, there exists a net $(h_i)$ of elements of $G$ such that $\lim_i h_i x_j' = x_j$ for all $j$. By passing to a subnet, we may assume that $h_i z$ converges and we put $y_0 = \lim_i h_i z$. It is now clear that $(x_j, y_0) \in W$ for all $j$.

  Now for an open cover $\cO$, let $y_\cO$ be as above and let $y$ be a limit point of the net $(y_\cO : \cO \text{ open cover of } X)$ ordered by refinement. Then for every non-empty, open $U \sub X$ and every open neighborhood $V \ni y$, $W \cap (U \times V) \neq \emptyset$. Thus $X \times \set{y} \sub W$.
\end{proof}

Conversely, it can be shown that if  $X$ is a $G$-flow such that
$X \perp Y$ for every minimal proximal flow $Y$, and the set of 
minimal points is dense in $X$, then $X$ is incontractible.

\vspace{1 mm}

\begin{cor}
  \label{c:proximal-SDOP}
  The Bernoulli shift is disjoint from all minimal, proximal flows. In particular, all proximal flows have the SCP.
\end{cor}

\begin{proof}
  By Lemma~\ref{l:alm-per-dense}, minimal points are dense in $(A^G)^n = (A^n)^G$; thus $A^G$ is incontractible and we can apply Proposition~\ref{p:proximal-disj-incontr}.
\end{proof}
\vspace{2 mm}

A natural question one can ask is whether $M(G)$ can be proximal. We will see in Section~\ref{sec:ManyAuts} that for countable discrete groups this can never be the case.
\vspace{2 mm}


\section{Some basic facts about strongly irreducible subshifts}
\label{sec:some-prop-clos}

Our next goal is to show that a large class of countable groups admit essentially free proximal flows, so that we can apply the results of the previous section. Our results and techniques here are inspired by the recent paper of Frisch, Tamuz, and Vahidi Ferdowsi \cite{Frisch2019}, who showed that icc groups admit \emph{faithful}, minimal, proximal flows. In this, as well as in the next section, we assume that $G$ is countable.

Let $A$ be a finite alphabet with $|A|\geq 2$ and recall that $G \actson A^G$ by $(g \cdot z)(h) = z(hg)$. A \emph{subshift} is a subflow of $A^G$. The collection of subshifts forms a closed subspace of $K(A^G)$, the hyperspace of compact subsets of $A^G$ equipped with the Vietoris topology. To describe this topology, we introduce some notation. If $F\subseteq G$ is finite and $\alpha\in A^F$, set $N_\alpha = \{z\in A^G : z|_F = \alpha\}$. Given $K\in K(A^G)$ and a finite $F\subseteq G$, let
\begin{equation*}
  S_F(K) = \set{\alpha \in A^F : K \cap N_\alpha \neq \emptyset}.
\end{equation*}
Then a basis of neighborhoods at $K$ for the Vietoris topology is given by
\begin{equation*}
  \set[\big]{\set{K' \in K(A^G) : S_F(K') = S_F(K)} : F \sub G \text{ finite}}.
\end{equation*}

It will also be helpful to consider more general shifts whose alphabet is a Cantor space rather than a finite set. If $A$ is finite, we have an action $G\actson A^{\N\times G}$ via $(g\cdot z)(n, h) = z(n, hg)$. The definitions of subshift and Vietoris topology in this context are similar. In fact, we often view $A^G$ as a subshift of $A^{\N\times G}$ via the inclusion $i \colon A^G\to A^{\N\times G}$, where we set $i(z)(n, g) = z(g)$.

A subshift $Z \subseteq A^{\N\times G}$ is called \df{strongly irreducible} if for any $n \in \N$, there is a finite $D\subseteq G$ such that for any finite $E_1, E_2\subseteq G$ which are $D$-apart and any $z_1, z_2\in Z$, there is $x\in Z$ with 
$x|_{[n]\times E_i} = z_i|_{[n]\times E_i}$. 
(Here and below, we use the notation $[n] = \set{0, 1, \ldots, n-1}$.) We will say that $D$ \df{witnesses} the strong irreducibility of $Z$ for $n$. For a subshift $Z\subseteq A^G$, we say $Z$ is \df{strongly irreducible} iff $i[Z]$ is. In the case of a subshift $Z \sub A^G$, we will also say that $Z$ is \df{$D$-irreducible} instead of strongly irreducible with witness $D$. For example, both $A^{\N\times G}$ and $A^G$ are strongly irreducible. In many ways, strongly irreducible subshifts behave like the full Bernoulli shift but they offer more flexibility.

If $F \sub G$ is finite and $n \in \N$, an \df{$(n, F)$-pattern} is just a function $t \colon [n] \times F \to A$. If $z \in A^{\N \times G}$, we say that the pattern $t$ \df{appears in $z$} if there exists $g \in G$ such that $(g \cdot z)|_{[n] \times F} = t$. Similarly, if $V\subseteq G$ is finite, we say that $t\in A^{[n]\times F}$ appears in $u\in A^{[n]\times V}$ if there is $g\in G$ with $Fg\subseteq V$ and $u(k, fg) = t(k, f)$ for each $f\in F$ and $k\in [n]$.
\vspace{2 mm}

\begin{lemma}
	\label{l:strirred closure props}
	\begin{enumerate}
	\item 
	If $Y\subseteq A^{\N\times G}$ and $Z\subseteq B^{\N\times G}$ are strongly irreducible, then so is $Y\times Z\subseteq (A\times B)^{\N\times G}$. 
	\item 
	If $Y\subseteq A^{\N\times G}$ is strongly irreducible and $\phi \colon Y\to B^{\N\times G}$ is a $G$-map, then $Z := \phi[Y]$ is also strongly irreducible.
	\end{enumerate}
\end{lemma}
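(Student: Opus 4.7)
The plan for part (1) is to identify $(A \times B)^{\N \times G}$ naturally with $A^{\N \times G} \times B^{\N \times G}$, under which $Y \times Z$ is a subshift carrying the diagonal $G$-action. Given $n \in \N$, I would let $D_Y$ and $D_Z$ witness the strong irreducibility of $Y$ and $Z$ respectively at $n$, and then show that $D := D_Y \cup D_Z$ works for $Y \times Z$ at $n$: any $E_1, E_2 \sub G$ that are $D$-apart are simultaneously $D_Y$-apart and $D_Z$-apart, so an independent application of the two hypotheses---one on each factor---produces a point $(y, z) \in Y \times Z$ satisfying the required agreement. This part is essentially bookkeeping.

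For part (2), the key technical ingredient will be a Curtis--Hedlund--Lyndon-style structural description of the $G$-map $\phi \colon Y \to B^{\N \times G}$. For each $k \in \N$, the coordinate function $\phi_k \colon Y \to B$ defined by $\phi_k(y) = \phi(y)(k, e_G)$ is continuous into a finite discrete set, so each preimage $\phi_k^{-1}(b)$ is clopen in the compact subshift $Y$. Using the product topology of $A^{\N \times G}$ and compactness, I would extract a finite $F_k \sub \N \times G$ and a function $\psi_k \colon A^{F_k} \to B$ such that $\phi_k(y) = \psi_k(y|_{F_k})$ for every $y \in Y$. Combined with the $G$-equivariance of $\phi$, this yields the ``sliding block code'' identity
\[
  \phi(y)(k, g) = \psi_k\bigl((g \cdot y)|_{F_k}\bigr)
\]
for all $k \in \N$, $g \in G$, and $y \in Y$.

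To deduce strong irreducibility of $Z := \phi[Y]$ at a given $n \in \N$, I would set $F := \bigcup_{k < n} F_k$, let $F_G \sub G$ be its $G$-projection, and choose $N \in \N$ with $F \sub [N] \times F_G$. Letting $D_Y$ witness strong irreducibility of $Y$ at $N$, I then expect $D := D_Y F_G$ to witness it for $Z$ at $n$: if $E_1, E_2 \sub G$ are $D$-apart, then $F_G E_1$ and $F_G E_2$ are $D_Y$-apart, so given $z_i = \phi(y_i) \in Z$, strong irreducibility of $Y$ produces $y \in Y$ with $y|_{[N] \times F_G E_i} = y_i|_{[N] \times F_G E_i}$. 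Since for $g \in E_i$ and $k < n$ the translate $F_k \cdot g$ lies in $[N] \times F_G E_i$, the block-code identity forces $\phi(y)(k, g) = \phi(y_i)(k, g) = z_i(k, g)$, completing the argument.

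The one step requiring genuine care is the Curtis--Hedlund--Lyndon analysis of $\phi$ in this $\N \times G$ setting, where $G$ acts only on the second coordinate and the ``block'' $F_k$ is allowed to spread nontrivially in both directions; once the block-code representation is in hand, combining the separation parameters is routine.
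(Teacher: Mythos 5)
Your proposal is correct and follows essentially the same route as the paper: both parts rest on the observation that, by continuity, compactness, and equivariance, $\phi(y)|_{[n]\times\{g\}}$ depends only on $y|_{[N]\times Fg}$ for some finite window $[N]\times F$, after which a witness $D$ for $Y$ at $N$ yields the witness $DF$ for $\phi[Y]$ at $n$. The extra Curtis--Hedlund--Lyndon detail you supply is exactly what the paper compresses into one sentence.
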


\begin{proof}
	The first item is clear. For the second, let $\phi \colon Y\to B^{\N\times G}$ be given, and fix $n \in \N$. We can find $N \in \N$ and finite symmetric $F\subseteq G$ so that for every $y\in Y$, $\phi(y)|_{[n]\times \{e_G\}}$ depends only on $y|_{[N]\times F}$. It follows that for every $g\in G$,  $\phi(y)|_{[n]\times \{g\}}$ depends only on $y|_{[N]\times Fg}$. If $D\subseteq G$ witnesses that $Y$ is strongly irreducible for $N$, then $DF$ will witness that $\phi[Y]$ is strongly irreducible for $n$. 
\end{proof}
\vspace{1 mm}

Sometimes, we want to choose explicit witnesses to irreducibility. Let $Z\subseteq A^{\N\times G}$ be strongly irreducible, $n \in \N$, and $D\subseteq G$ witness that $Z$ is strongly irreducible for $n$. Then if $E_1, E_2\subseteq G$ are finite and $D$-apart, $\alpha_i\in S_{(n,E_i)}(Z)$, and $F\subseteq G$ is finite with $E_i\subseteq F$, we let $\mathrm{Conf}_Z(F, \alpha_1, \alpha_2)$
be some fixed element of $S_{(n, F)}(Z)$ whose restriction to $[n]\times E_i$ is $\alpha_i$.


Following \cite{Frisch2019}, define $\mathcal{S}(A)\subseteq K(A^{G})$ to be the closure of the strongly irreducible subshifts with the finitely many constant configurations
removed. We define $\mathcal{S}(A^\N)\subseteq K(A^{\N\times G})$ analogously, removing the configurations which do not depend on the $G$ coordinate. Then $\mathcal{S}(A)\subseteq K(A^{G})$ is a compact space, and $\mathcal{S}(A^\N)\subseteq K(A^{\N\times G})$ is locally compact. These spaces are particularly well suited for Baire category arguments, a fact heavily exploited in \cite{Frisch2019}.
\vspace{1 mm}

\begin{lemma}
  \label{l:spaced-str-irred}
  For every finite $D$, there exists a strongly irreducible subshift $Y \sub 2^G$ such that for every $y \in Y$, the set $\set{g \in G : y(g) = 1}$ is $D$-separated and non-empty.
\end{lemma}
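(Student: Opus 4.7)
The plan is to take
\[
F \ := \ D^{-1} D \cup \{e\},
\]
a finite symmetric set containing $e$, and to define
\[
Y \ := \ \bigl\{\, y \in 2^G \ :\ \operatorname{supp}(y) \text{ is a maximal $D$-separated subset of } G\,\bigr\}.
\]
Unpacking, $Y$ is carved out in $2^G$ by two local, shift-invariant, closed conditions: (i) no two distinct $g,h \in \operatorname{supp}(y)$ satisfy $gh^{-1} \in F$ (so the support is $D$-separated), and (ii) for every $g \in G$, the set $Fg$ meets $\operatorname{supp}(y)$ (so the support is locally maximal). Zorn's lemma yields a maximal $D$-separated set $L \sub G$, whence $1_L \in Y$ and $Y \ne \emptyset$; condition (i) gives $D$-separation, while (ii) applied at $g = e$ forces $\operatorname{supp}(y) \cap F \ne \emptyset$, so every $y \in Y$ has non-empty support (in fact $F$-syndetic, by Lemma~\ref{l:syndetic}).

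What remains is to show $Y$ is strongly irreducible, with witness $F^3$. Fix finite $E_1, E_2 \sub G$ with $F^3 E_1 \cap F^3 E_2 = \emptyset$, and pick $z_1, z_2 \in Y$; I will produce $y \in Y$ with $y|_{E_i} = z_i|_{E_i}$. Set
\[
S^0_i \ := \ \operatorname{supp}(z_i) \cap F^2 E_i, \qquad S^0 \ := \ S^0_1 \cup S^0_2.
\]
Then $S^0$ is $D$-separated: within each $S^0_i$ this is inherited from $\operatorname{supp}(z_i)$, and if $g \in S^0_1$, $h \in S^0_2$ satisfied $gh^{-1} \in F$, then $h \in Fg \sub F^3 E_1$ while simultaneously $h \in F^2 E_2 \sub F^3 E_2$, contradicting $F^3$-apartness. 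Starting from $S^0$, greedily extend: enumerate $G \setminus (F E_1 \cup F E_2)$ and, in order, adjoin each element to the running set whenever $D$-separation is preserved; let $S$ denote the final set and put $y := 1_S$.

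I claim $y \in Y$ and $y|_{E_i} = z_i|_{E_i}$. By construction $S$ is $D$-separated. For maximality, take $g \notin S$: if $g \in G \setminus (F E_1 \cup F E_2)$, greediness supplies some $h \in S \cap (Fg \setminus \{g\})$; if instead $g \in FE_i$, then $z_i(g) = 0$ (else $g \in S^0_i \sub S$), and since $z_i \in Y$ there is $h \in \operatorname{supp}(z_i) \cap (Fg \setminus \{g\})$; because $Fg \sub F^2 E_i$, this $h$ already lies in $S^0_i \sub S$. Hence $S$ is maximal $D$-separated and $y \in Y$. Finally, the $F^3$-apart hypothesis yields $E_i \cap F^2 E_{3-i} = \emptyset$, so for $g \in E_i \sub FE_i$ the greedy stage never touches $g$, giving $g \in S$ iff $g \in S^0_i$ iff $z_i(g) = 1$, as required.

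The main subtlety is the joint choice of buffer radii: the frozen zone $FE_i$ must be large enough that maximality at every $g \in FE_i$ is already witnessed inside the committed set $\operatorname{supp}(z_i) \cap F^2 E_i$ (which forces the commitment on the $F^2$-neighborhood), while $F^3$-apartness is exactly what rules out $D$-conflicts between the two frozen pieces. With the nested radii $F \subset F^2 \subset F^3$ as above, these constraints fit together and the construction goes through.
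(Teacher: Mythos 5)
Your proposal is correct and takes essentially the same route as the paper: both use the subshift of indicator functions of maximal $D$-separated sets and prove strong irreducibility by freezing the support of each $z_i$ on a slightly enlarged neighborhood of $E_i$ (which already witnesses maximality there) and then extending to a maximal $D$-separated set away from those neighborhoods. The only differences are cosmetic — you encode $D$-separation via the symmetric set $D^{-1}D\cup\{e\}$ rather than symmetrizing $D$, and you constrain the extension to avoid the frozen zones where the paper instead extends arbitrarily and verifies afterwards that nothing new lands there.
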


\begin{proof}
	By enlarging $D$ if necessary, we may assume $D$ to be symmetric. Let $Y$ be set of all $y \in 2^G$ such that $\{g\in G: y(g) = 1\}$ is a maximal $D$-separated set. Then $Y$ is a subshift such that for all $y \in Y$, the set $\set{g \in G : y(g) = 1}$ is $D$-separated and non-empty. We will show that $Y$ is $D^3$-irreducible. Let $E_1, E_2\subseteq G$ be finite and $D^3$-apart, and let $y_1, y_2\in Y$. First enlarge each $E_i$ to a set $E_i'\subseteq D^2E_i$ so that 
    \begin{enumerate}
  	\item 
  	If $g\in E_i'\setminus E_i$ we have $y_i(g) = 1$.
  	\item \label{i:l:spaced-str-irred:2}
  	For any $h\in E_i$ with $y_i(h) = 0$, there is $g\in D^2h\cap E_i'$ with $y_i(g) = 1$.
    \end{enumerate}
	One can do this by noting that any maximal $D$-separated set is $D^2$-syndetic by Lemma~\ref{l:syndetic}. Therefore for any $h$ as in item \ref{i:l:spaced-str-irred:2}, we can find $g\in D^2h$ with $y_i(g) = 1$ and add it to $E_i'$.
	Write $F_i = \{g\in E_i': y_i(g) = 1\}$. Notice that $F_1\cup F_2$ is a $D$-separated set and extend it to some maximal $D$-separated set $S$. Then $1_S\in Y$ and $(1_S)|_{E_i'} = y_i|_{E_i'}$. Indeed, it is clear that $F_i \sub E_i' \cap S$. For the reverse inclusion, note that if $h \in E_i' \sminus F_i$, then by the properties of $E_i'$, we have that $F_i \cap D^2h \neq \emptyset$; as $S$ is $D$-separated and $F_i \sub S$, this implies that $h \notin S$.
\end{proof}
\vspace{0 mm}

Given a subshift $Z\subseteq A^{\N\times G}$, a finite $F\subseteq G$, and $n \in \N$, we say that $Z$ is \emph{$(n, F)$-minimal} if for every $z\in Z$, every $t\in S_{(n, F)}(Z)$ appears in $z$. Notice that this occurs iff for some finite $V\subseteq G$, every $t\in S_{(n,F)}(Z)$ appears in every $u\in S_{(n,V)}(Z)$.

\begin{prop}
  \label{p:S-minimal-comeager}
  The set $\set{Z \in \cS(A^\N) : Z \text{ is minimal}}$ is dense $G_\delta$ in $\cS(A^\N)$. Similarly for $\cS(A)$. 
\end{prop}

\begin{proof}
  We only provide the proof for $\cS(A^\N)$, the other case being similar.
  
%
  
 Notice that a subshift $Z\subseteq A^{\N\times G}$ is minimal iff $Z$ is $(n,F)$-minimal for each $n \in \N$ and $F\subseteq G$ finite. By the remark before the statement of the proposition, being $(n,F)$-minimal is an open condition, so being minimal is $G_\delta$. To prove that it is dense in $\cS$, by the Baire category theorem, it suffices to show that for every strongly irreducible $Z'$, every $n \in \N$, and every finite $F \sub G$,  there exists a strongly irreducible $Z$ such that for every $z \in Z$, the set of $(n,F)$-patterns that appear in $z$ is exactly $S_{(n,F)}(Z')$.

  To do this, we employ the techniques of \cite{Frisch2019}. Fix $Z'$, $n$, and $F$. Let $V \supseteq F$ be finite, symmetric, and contain the identity. Suppose $V$ is large enough so that we can find $u \in Z'$ such that $u|_{[n]\times V}$ contains all $(n,F)$-patterns that appear in $Z'$. Suppose moreover that $V$ witnesses the strong irreducibility of $Z'$ for $n$. Let $Y \sub 2^G$ be given by Lemma~\ref{l:spaced-str-irred} with $D = V^5$.

  Next we define a continuous, equivariant map $\phi \colon Z' \times Y \to A^{\N\times G}$ as follows. Let $(z', y) \in Z' \times Y$ be given and denote $z = \phi(z', y)$. Let $g\in G$.
  
  \begin{itemize}
  \item If $m\geq n$, then define $z(m, g) = 0$ for each $g\in G$.
  \vspace{1 mm}
  
  \item If $g = kh$, where $y(h) = 1$ and $k\in V$, set $z(m,g) = u(m,k)$ for each $m< n$.
  \vspace{1 mm}
  
  \item If there is no $k \in V^3$ such that $y(kg) = 1$, define $z(m, g) = z'(m, g)$ for each $m< n$.
  \vspace{1 mm}
  
  \item If $g = kh$, where $y(h) = 1$ and $k \in V^3 \sminus V$, let
    \begin{equation*}
    z(m, g) = \mathrm{Conf}_{Z'}(V^5, (h\cdot z')|_{[n]\times (V^5\setminus V^3)}, u|_{[n]\times V})(m, k)  
    \end{equation*}
    for each $m< n$. The function $\mathrm{Conf}$ is defined after the proof of Lemma \ref{l:strirred closure props}.
  \end{itemize}

  Finally, let $Z = \phi(Z' \times Y)$. We will show now that for $z = \phi(z', y)$, the set of $(n, F)$-patterns that appear in $z$ is exactly $S_{(n, F)}(Z')$. 
  
  To see that every desired pattern does appear, find some $h\in G$ with $y(h) = 1$. Then $(h\cdot z)|_{[n]\times V} = u|_{[n]\times V}$, and every pattern from $S_{(n, F)}(Z')$ appears in $u|_{[n]\times V}$ by our choice of $u$. 
  
  To see that no new patterns appear, let $g\in G$. If $Fg\cap V^3h = \emptyset$ for every $h\in G$ with $y(h) = 1$, then $(g\cdot z)|_{[n]\times F} = (g\cdot z')|_{[n]\times F}\in S_{(n, F)}(Z')$. If there is $h\in G$ with $y(h) = 1$ and $Fg\cap V^3h\neq \emptyset$, then $Fg\subseteq V^5h$. We then have for each $f\in F$ and $m< n$ that
  \begin{equation*}
  (g\cdot z)(m, f) = \mathrm{Conf}_{Z'}(V^5, (h\cdot z')|_{[n]\times (V^5\setminus V^3)}, u|_{[n]\times V})(m, fgh^{-1}).
  \end{equation*} 
  So by the definition of $\mathrm{Conf}_{Z'}$, we have $(g\cdot z)|_{[n]\times F}\in S_{(n, F)}(Z')$ as desired.
\end{proof}
\vspace{1 mm}

Recall that a group is called \emph{icc} if the conjugacy class of every non-identity element is infinite. The following is the main result of \cite{Frisch2019}.
\vspace{1 mm}

\begin{theorem}[Frisch--Tamuz--Vahidi Ferdowsi]
  \label{th:FTVF}
  Let $G$ be an icc group and let $A$ be a finite alphabet. Then the set of minimal, proximal, faithful subshifts in $\cS(A)$ is dense $G_\delta$ in $\cS(A)$.
\end{theorem}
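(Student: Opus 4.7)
The plan is to show that in $\cS(A)$, the minimal subshifts, the proximal subshifts, and the faithful subshifts each form a dense $G_\delta$ set, so that their intersection is dense $G_\delta$ by Baire's theorem. Minimality has been settled in Proposition~\ref{p:S-minimal-comeager}, leaving proximality and faithfulness.

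For faithfulness, fix $g \in G \setminus \{e\}$. The set $U_g$ of $Z \in \cS(A)$ for which $g$ acts non-trivially is characterized by the existence of a finite $F \sub G$ and a pattern $\alpha \in S_F(Z)$ together with some $h \in F$ satisfying $hg \in F$ and $\alpha(h) \neq \alpha(hg)$; this is a Vietoris-open condition, and faithfulness equals $\bigcap_{g \neq e} U_g$. For density, given a strongly irreducible $Z'$ and a finite $E \sub G \setminus \{e\}$, I would construct a strongly irreducible $Z$ close to $Z'$ lying in each $U_g$ for $g \in E$ by using Lemma~\ref{l:spaced-str-irred} to obtain a sufficiently sparse scaffolding and then stamping, as in Proposition~\ref{p:S-minimal-comeager}, short explicit patterns witnessing non-invariance under each $g \in E$ at well-separated positions.

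For proximality, observe first that a minimal $Z$ is proximal iff for every finite $V, F \sub G$ and every pair $\alpha_1, \alpha_2 \in S_V(Z)$, there exist $z \in Z$ and $g_1, g_2 \in G$ such that $(g_i \cdot z)|_V = \alpha_i$ for $i = 1, 2$ and $(g_1 \cdot z)|_F = (g_2 \cdot z)|_F$. Such a configuration is witnessed by a single pattern on a large enough window, so each such condition is Vietoris-open; intersecting over countably many $(V, \alpha_1, \alpha_2, F)$ makes proximality $G_\delta$ conditionally on minimality. For density, given a strongly irreducible $Z'$ and test data $(V, \alpha_1, \alpha_2, F)$, I would build a nearby strongly irreducible $Z$ in the spirit of Proposition~\ref{p:S-minimal-comeager}: pick a reference configuration $u$ on a large window $V^*$ that exhibits both $\alpha_1$ and $\alpha_2$ together with a gluing making their $F$-extensions coincide; apply Lemma~\ref{l:spaced-str-irred} with $D = (V^*)^5$ to obtain a scaffolding; stamp $u$ at the marked positions; and interpolate elsewhere via the $\mathrm{Conf}_{Z'}$-operator, exactly as in Proposition~\ref{p:S-minimal-comeager}. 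The icc hypothesis enters in the construction of the reference $u$: arranging two occurrences of $\alpha_1$ and $\alpha_2$ whose $F$-extensions can be forced to agree requires using translates whose relative positions can be chosen in an infinite set, which is supplied by the infinite conjugacy classes of $G$.

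The main obstacle is the density step for proximality: one must simultaneously remain Vietoris-close to $Z'$, preserve strong irreducibility of the perturbation, and force any prescribed pair of patterns to become proximal in the limit; only the last of these truly invokes the icc property. Once these three dense $G_\delta$ sets---minimal, proximal, faithful---are in place, Baire's theorem yields that the minimal, proximal, faithful subshifts form a dense $G_\delta$ in $\cS(A)$, which is the desired conclusion.
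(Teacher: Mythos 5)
First, a point of comparison: the paper does not prove Theorem~\ref{th:FTVF} at all --- it is quoted as the main result of \cite{Frisch2019}, and the surrounding sections build on it as a black box. So your attempt is not being measured against an argument in this paper but against the (genuinely difficult) argument of Frisch, Tamuz and Vahidi Ferdowsi.

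Your decomposition into three dense $G_\delta$ sets is the right frame, and the minimality and faithfulness parts are essentially already available in the paper (Proposition~\ref{p:S-minimal-comeager}; and Proposition~\ref{p:ess-free} gives much more than faithfulness for icc groups, since every non-constant strongly irreducible subshift is already essentially free). The genuine gap is the density of proximality, which you correctly identify as ``the main obstacle'' but then do not close. Two problems. First, your proposed open characterization of proximality --- a single $z \in Z$ with two translates $g_1 \cdot z, g_2 \cdot z$ realizing $\alpha_1, \alpha_2$ on $V$ and agreeing on $F$ --- is not equivalent to proximality in either direction: proximality is a statement about arbitrary \emph{pairs} $(z_1, z_2) \in Z^2$, and knowing that $(z_1, z_2)$ lies in the closure of the set of pairs agreeing on some translate of $F$ does not place it in that set. (The correct way to see that proximality is $G_\delta$ is the projection trick of Remark~\ref{rem:disjoint-proximal}.) Second, and more seriously, the density construction does not work as sketched: stamping a fixed reference pattern $u$ at the positions of a maximal separated set obtained from Lemma~\ref{l:spaced-str-irred} guarantees only that $u$ occurs syndetically in every configuration of the perturbed subshift $Z$. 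Two configurations $z_1, z_2 \in Z$ carry \emph{different} separated sets, so their stamped occurrences of $u$ need not line up, and there is no reason for $z_1$ and $z_2$ to agree on any common window $Fg$. Forcing agreement across all pairs of configurations simultaneously is precisely the content of the Frisch--Tamuz--Vahidi Ferdowsi construction, and it is there --- not in placing two prescribed patterns inside one configuration, which strong irreducibility already accomplishes --- that the icc hypothesis is genuinely needed, via a combinatorial lemma about infinite conjugacy classes. As it stands, the central step of the theorem is missing from your proposal.
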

\vspace{1 mm}

The authors of \cite{Frisch2019} ask whether every icc group admits a free proximal flow. We answer this question in Section~\ref{sec:from-essent-freen} but as a preliminary step, here we show that one can easily get essential freeness.
\vspace{1 mm}

\begin{prop}
  \label{p:ess-free}
  If $G$ is icc or torsion free, then every non-constant strongly irreducible $Z\subseteq A^{G}$ is essentially free.
\end{prop}

\begin{proof}
  To see that a subshift $Z$ is essentially free, it suffices to show that for every $g \in G \sminus \set{e_G}$, finite $F \sub G$, and every $\alpha \in A^F$ for which $N_\alpha \cap Z \neq \emptyset$, there exists $z \in N_\alpha \cap Z$ and $h \in G$ such that $z(hg) \neq z(h)$. Suppose now that $Z$ is $D$-irreducible for some finite $D \sub G$.

  Suppose first that $G$ is icc. Then the conjugacy class of $g$ is infinite and therefore there exists $h \in G$ such that $h$, $hg$, and $F$ are $D$-apart. Let $z_1 \in Z \cap N_\alpha$, let $z_2, z_3 \in Z$ be such that $z_2(h) \neq z_3(hg)$ (those exist because $Z$ is non-constant). By $D$-irreducibility, there is $z \in Z$ such that $z|_F = z_1|_F$ (so that $z \in N_\alpha$), $z(h) = z_2(h)$, and $z(hg) = z_3(hg)$ (so that $z(h) \neq z(hg)$).

  Next suppose that $G$ is torsion-free. Then the order of $g$ is infinite and there exists $n \in \N$ and $h \in G$ such that $h$, $hg^n$, and $F$ are $D$-apart. Then, as above, there exists $z \in N_\alpha$ such that $z(h) \neq z(hg^n)$. Finally, there must exist $k < n$ such that $z(hg^k) \neq z(hg^{k+1})$, finishing the proof.
\end{proof}
\vspace{1 mm}

\begin{remark}
  \label{rem:ess-free}
  Note that it is not true that strongly irreducible subshifts are essentially free for all groups $G$. For example, if $G$ has a finite normal subgroup $N$, any strongly irreducible subshift of $G/N$ is also one for $G$ and it is not faithful. We do not know whether this is the only obstruction.
\end{remark}
\vspace{1 mm}

\begin{cor}
	\label{c:ess-free-minimal}
	Let $G$ be icc or torsion-free. Then $\set{Z \in \cS(A) : Z \text{ is essentially free}}$ is dense $G_\delta$ in $\cS(A)$. 
\end{cor}

\begin{proof}
  It follows from Proposition~\ref{p:ess-free} that the collection of essentially free subshifts is dense in $\cS$. That it is $G_\delta$ follows from the description in the first paragraph in the proof of this proposition.
\end{proof}
\vspace{1 mm}

We obtain the analogous statement for every group by considering shifts in $\cS(A^\N)$.
\vspace{1 mm}

\begin{prop}
	\label{p:ess-free-general}
	The set $\{Z\in \mathcal{S}(A^\N): Z \text{ is essentially free}\}$ is dense $G_\delta$ in $\mathcal{S}(A^\N)$. 
\end{prop}

\begin{proof}
	It remains to show that the set is dense. Suppose $Z'$ is strongly irreducible, let $n \in \N$, and let $F\subseteq G$ be finite. Define $\phi \colon Z'\times A^{\N\times G}\to A^{\N\times G}$ by 
	
	\begin{equation*}
      \phi(z', y)(k, g) =  
      \begin{cases}
        z'(k, g)  &\text{ if } k < n,\\
        y(k-n, g) &\text{ if } k\geq n.
      \end{cases}
	\end{equation*}
	
	Then $Z = \phi[Z'\times A^{\N\times G}]$ is strongly irreducible, essentially free, and we have $S_{(n, F)}(Z) = S_{(n, F)}(Z')$.
\end{proof}
\vspace{1 mm}

Combining Theorem~\ref{th:FTVF} with Proposition~\ref{p:ess-free}, we obtain the following.
\vspace{1 mm}

\begin{cor}
	\label{c:ExistsEssentiallyFree}
	Let $G$ be a countable icc group. 
	Then there exists a metrizable, essentially free, minimal, proximal $G$-flow $X$.
  \end{cor}

We will use Corollary~\ref{c:ExistsEssentiallyFree} to prove Theorem~\ref{th:intro:free-proximal} in Section~\ref{sec:from-essent-freen}.

\vspace{2 mm}


\section{Disjointness results}
\label{sec:bern-disj-count}

First we prove a slight strengthening of Proposition~\ref{p:SDOP-equiv}.
\vspace{1 mm}

\begin{lemma}
  \label{l:SDOP-implies-disj-str-irred}
  Let $X$ be a minimal $G$-flow with the SCP. Then $X$ is disjoint from any strongly irreducible subshift of $A^{\N \times G}$.
\end{lemma}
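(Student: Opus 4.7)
The plan is to adapt the implication (ii) $\Rightarrow$ (iii) from Proposition~\ref{p:SDOP-equiv}, replacing the Bernoulli flow $2^G$ by the strongly irreducible subshift $Z$; strong irreducibility will play the role that unrestricted choice of values played in $2^G$.

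Two preliminary observations I would establish first. One, strong irreducibility extends by induction from two sets to finitely many: if $D$ witnesses strong irreducibility of $Z$ for $n$ and $E_1,\dots,E_k\sub G$ are pairwise $D$-apart finite sets with prescriptions $z_1,\dots,z_k\in Z$, then some $x\in Z$ satisfies $x|_{[n]\times E_i} = z_i|_{[n]\times E_i}$ for every $i$ (at each step, glue the solution for the previous $k-1$ sets with $z_k$, using that $E_1\cup\dots\cup E_{k-1}$ is $D$-apart from $E_k$). Two, $Z$ is topologically transitive---given two non-empty basic opens in $Z$ with pattern supports $F_1, F_2$, pick $g\in G$ so that $F_1 g$ and $F_2$ are $D$-apart (possible because $G$ is infinite and the obstructing set is finite) and use strong irreducibility to realize both patterns simultaneously. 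Since $G$ is countable, $Z$ is compact metrizable, so the Baire category theorem gives a dense $G_\delta$ set of transitive points in $Z$.

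Now let $Y\sub X\times Z$ be a subflow with full projections. Since $\pi_Z\colon Y\to Z$ is surjective, I can pick a transitive point $z_0\in Z$ together with some $x_0\in X$ with $(x_0,z_0)\in Y$. I then aim to show $\cl{G\cdot (x_0,z_0)} = X\times Z$, which forces $Y = X\times Z$ and proves the lemma. Unraveling the basic opens of $Z$, it suffices to check that for every non-empty open $U\sub X$, every finite $F\sub G$, every $n\in\N$, and every $\alpha\in S_{(n,F)}(Z)$, there exists $g\in G$ with $g\cdot x_0\in U$ and $g\cdot z_0\in N_\alpha$.

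To produce such a $g$, let $D$ witness strong irreducibility of $Z$ for $n$. Applying the SCP of $X$ to the finite set $DF$ and the open set $U$ yields a finite $DF$-separated set $S\sub G$ with $S^{-1}U = X$. Being $DF$-separated ensures that the sets $\{Fs : s\in S\}$ are pairwise $D$-apart, so the multi-set version of strong irreducibility produces $z\in Z$ with $z(k,fs) = \alpha(k,f)$ for all $s\in S$, $f\in F$, $k<n$. Since $z_0$ has dense orbit in $Z$, there is $h\in G$ such that $h\cdot z_0$ agrees with $z$ on $[n]\times FS$; a direct computation from $(g'\cdot z_0)(k,g'') = z_0(k,g''g')$ then gives $(sh\cdot z_0)(k,f) = (h\cdot z_0)(k,fs) = \alpha(k,f)$, whence $sh\cdot z_0\in N_\alpha$ for every $s\in S$. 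Finally, $h\cdot x_0\in X = S^{-1}U$ produces some $s\in S$ with $sh\cdot x_0\in U$, and $g := sh$ does the job. The main point---rather than a deep obstacle---is recognizing that one must feed the enlarged constant $DF$ (and not merely $D$) to the SCP, so that the $D$-apartness demanded by strong irreducibility is available when extending $\alpha$ simultaneously along all translates $Fs$.
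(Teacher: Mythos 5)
Your proof is correct, and its core mechanism is exactly the paper's: apply the SCP to the enlarged set $DF$ to get a $DF$-separated $S$ with $S^{-1}U=X$, observe that the translates $Fs$, $s\in S$, are then pairwise $D$-apart, and use (the finitary iteration of) strong irreducibility to plant the pattern $\alpha$ along all of them. The only structural difference is in the wrapping: you route the argument through a transitive point $z_0$ of the subshift and the density of its orbit, which obliges you to first prove that strongly irreducible subshifts are topologically transitive and to invoke metrizability and Baire to get a transitive point. The paper avoids this detour entirely: it takes the point $y$ produced by strong irreducibility (with $s\cdot y\in N_\alpha$ for all $s\in S$), lifts it to some $(x,y)$ in the subflow using the full-projection hypothesis, and then picks $s\in S$ with $sx\in U$. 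Both arguments are valid; the paper's is slightly leaner and does not need countability of $G$ for this step, while yours more closely mirrors the (ii)$\Rightarrow$(iii) implication of Proposition~\ref{p:SDOP-equiv}.
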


\begin{proof}
  Let $Y\subseteq A^{\N\times G}$ be a strongly irreducible subshift. Let $U\subseteq X$ be non-empty open. Let $n \in \N$, $F\subseteq G$ be finite, and $N_\alpha\cap Y\neq \emptyset$ for some $\alpha\in 2^{[n]\times F}$. Let $Z\subseteq X\times Y$ be a subflow with full projections. We need to show that $Z\cap (U\times N_\alpha) \neq \emptyset$ in order to conclude that $Z = X\times Y$.
    
  Suppose $D\subseteq G$ witnesses the strong irreducibility of $Y$ for $n$. Let $S\subseteq G$ be $DF$-separated such that $S^{-1}U = X$. Use the strong irreducibility of $Y$ to find $y\in Y$ with $g\cdot y|_{[n]\times F} = \alpha$ for every $g\in S$. As $Z\subseteq X\times Y$ has full projections, find $x\in X$ with $(x, y)\in Z$, then find $g\in S$ with $gx\in U$. It follows that $(gx, gy)\in Z\cap (U\times N_\alpha)$ as desired.
\end{proof}
\vspace{1 mm}

We can now prove our first main result.
\vspace{1 mm}

\begin{theorem}
  \label{th:main-BDJ}
  Let $G$ be a countable, infinite group. Then every minimal $G$-flow has the SCP and is thus disjoint from every strongly irreducible subshift. In particular every minimal $G$-flow is disjoint from the Bernoulli flow $2^G$.
\end{theorem}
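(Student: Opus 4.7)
The plan is to invoke Corollary~\ref{c:one-SDOP-enough} to reduce the theorem to producing a single essentially free, minimal $G$-flow with the SCP. Once such a flow is at hand, every minimal $G$-flow inherits the SCP, and Lemma~\ref{l:SDOP-implies-disj-str-irred} then yields disjointness from every strongly irreducible subshift, in particular from $2^G$ itself (which is trivially strongly irreducible, since arbitrary patterns on disjoint finite subsets of $G$ can be glued inside $2^G$).

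To produce the desired flow, I would rely on an algebraic dichotomy: every countable infinite group $G$ admits a (possibly trivial) finite normal subgroup $N$ such that the quotient $G/N$ either (a) contains an infinite maxap normal subgroup, or (b) is icc. One proves this by examining the FC-center of $G$ and its torsion part. In Case~(a), Proposition~\ref{p:normal-maxap} applied to $G/N$ yields a free, minimal $(G/N)$-flow with the SCP; in Case~(b), Corollary~\ref{c:ExistsEssentiallyFree} yields an essentially free, minimal, proximal $(G/N)$-flow, which has the SCP by Corollary~\ref{c:proximal-SDOP}. Pulling back the action along the quotient $G \to G/N$ gives a minimal $G$-flow $\bar Y$ with the SCP, since any subset of $G/N$ that is separated with respect to the image of a finite $D \sub G$ lifts bijectively to a $D$-separated subset of $G$.

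If $N$ is trivial, $\bar Y$ is already essentially free as a $G$-flow and the argument concludes. Otherwise $N$ acts trivially on $\bar Y$, so $\bar Y$ fails to be essentially free as a $G$-flow, and one must lift essential freeness while preserving the SCP. My approach would be to take a minimal subflow $W \sub \bar Y \times Z$ for some essentially free minimal $G$-flow $Z$ (such $Z$ exists, e.g., by a theorem of Weiss, or as an essentially free minimal subshift of $2^G$). Since $W$ projects onto the minimal flow $Z$, the quasi-openness of $W \to Z$ (as in the proof of Proposition~\ref{p:extensions}) forces the preimage of the dense $G_\delta$ set of free points in $Z$ to be dense in $W$, so $W$ is essentially free. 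Verifying that $W$ also inherits the SCP is the main technical hurdle, since Proposition~\ref{p:extensions} does not apply directly to $\bar Y$; I would adapt its proof by refining the finite cover of the fiber of $W \to \bar Y$ so that the translating elements lie in distinct left $N$-cosets, and then invoking the SCP of $\bar Y$ with the image of $D$ in $G/N$ playing the role of $D$.
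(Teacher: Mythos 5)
Your overall strategy matches the paper's: reduce via Corollary~\ref{c:one-SDOP-enough} and Lemma~\ref{l:SDOP-implies-disj-str-irred} to producing one essentially free minimal flow with the SCP, and obtain it from the FC-center dichotomy (infinite normal maxap subgroup handled by Proposition~\ref{p:normal-maxap}; icc handled by Corollary~\ref{c:ExistsEssentiallyFree} together with Corollary~\ref{c:proximal-SDOP}). A small imprecision: the finite normal subgroup one quotients by in the residually finite case is the \emph{center} of the FC-center, not its torsion part. Where you genuinely diverge is in the treatment of the finite normal subgroup $N$: the paper proves a dedicated transfer lemma (Lemma~\ref{l:finite-index}) for the canonical map $M(G)\to M(G/N)$, exploiting that this map is open, $|N|$-to-one, and satisfies $\pi^{-1}(\pi[U])=NU$; you instead pull the flow back to $G$, multiply by an essentially free flow $Z$, and rerun Proposition~\ref{p:extensions} over the base $\bar Y$, which is not essentially free but on which $N$ acts trivially. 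Your lifting of $\theta(D)$-separated sets to $D$-separated sets, and your argument that $W$ is essentially free, are both correct.

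The one real gap is the sentence about ``refining the finite cover of the fiber of $W\to\bar Y$ so that the translating elements lie in distinct left $N$-cosets.'' Refining the cover does not by itself achieve this: the elements $g_i$ are constrained by $g_iB_i\subseteq V$, and nothing forces them into distinct cosets. What does work is to \emph{re-choose} them: for each fiber point $x_i$, the set $R_i=\{g\in G: g\cdot x_i\in V\}$ is syndetic by minimality of $W$, hence infinite, so one may pick $g_i\in R_i$ avoiding the finitely many cosets $Ng_j$, $j<i$, and only then shrink $B_i$ so that $g_iB_i\subseteq V$. This is exactly what is needed: after enlarging $D$ to be $N$-invariant and shrinking $U\subseteq\bar Y$ so that $d_1U\cap d_2U\neq\emptyset$ forces $d_1N=d_2N$ (disjointness for $d$'s in the \emph{same} coset is impossible since $N$ acts trivially on $\bar Y$), an equality $d_ig_i=d_jg_j$ with $g_iy_0,g_jy_0\in U$ yields only $g_i\in Ng_j$; with the cosets distinct this is a contradiction, so $F=\{g_1,\dots,g_n\}$ is $D$-separated, and the remainder of the proof of Proposition~\ref{p:extensions} goes through by invoking the SCP of $\bar Y$ for the set $DF$ (not for ``the image of $D$ in $G/N$''). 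With this repair your argument is complete; as written, the key coset-separation claim is asserted rather than proved.
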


We recall that every group $G$ admits a unique up to isomorphism universal minimal flow $M(G)$, which maps onto every minimal $G$-flow. One way to construct $M(G)$ is to consider the universal ambit $\beta G$
 (the Stone--\v{C}ech compactification of $G$); then every minimal subflow of $\beta G$ is isomorphic to $M(G)$. In particular, if $G$ is infinite, $M(G)$ is not metrizable.

We start with a lemma.
\vspace{1 mm}

\begin{lemma}
  \label{l:finite-index}
  Suppose that $N \lhd G$ is a finite, normal subgroup and $H = G/N$. Then if $M(H)$ has  the SCP, so does $M(G)$.
\end{lemma}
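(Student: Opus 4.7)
Proof plan: The approach is to work with the quotient map $q \colon M(G) \to M(G)/N$, which is a continuous $G$-equivariant map whose fibers are exactly the $N$-orbits in $M(G)$ (each of cardinality at most $|N|$, since $N$ is finite). Because $N$ acts trivially on $M(G)/N$, this quotient is a minimal $H$-flow, and by universality of $M(H)$ it is a factor of $M(H)$, so it inherits the SCP as an $H$-flow from the hypothesis. A direct lifting through a section $\sigma \colon H \to G$ of $\pi$ (given a finite $E \subseteq G$, apply the $H$-SCP on $M(G)/N$ to $\pi(E)$, obtain a $\pi(E)$-separated $T \subseteq H$, and set $S = \sigma(T)$, which is $E$-separated in $G$ and satisfies $S^{-1} V = M(G)/N$) then promotes this to SCP for $M(G)/N$ viewed as a $G$-flow.

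Now fix a finite $D \subseteq G$ and a non-empty open $U \subseteq M(G)$. Pick $x_0 \in U$, let $y_0 = q(x_0)$, and enumerate $q^{-1}(y_0) = \{n_1 x_0, \ldots, n_k x_0\}$ with the points distinct and $n_i \in N$. By minimality of $M(G)$, each set $A_i := \{g \in G : g \cdot n_i x_0 \in U\}$ is syndetic in $G$, hence infinite, so one may inductively pick $g_i \in A_i \setminus \bigcup_{j<i} D^{-1} D g_j$ and thereby ensure that $F := \{g_1, \ldots, g_k\}$ is a $D$-separated subset of $G$. By continuity, shrink to pairwise disjoint open sets $B_i \ni n_i x_0$ with $g_i B_i \subseteq U$; then $\bigcup_i B_i \supseteq q^{-1}(y_0)$ and $\bigcup_i B_i \subseteq F^{-1} U$. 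Set $W := M(G)/N \setminus q(M(G) \setminus \bigcup_i B_i)$, an open neighborhood of $y_0$ in $M(G)/N$ with $q^{-1}(W) \subseteq F^{-1} U$, and apply the SCP of $M(G)/N$ as a $G$-flow to $(DF, W)$ to obtain a $DF$-separated $S_0 \subseteq G$ with $S_0^{-1} W = M(G)/N$.

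I claim $S := F S_0$ is $D$-separated in $G$ and $S^{-1} U = M(G)$, which completes the proof. Covering: given $x \in M(G)$, pick $s \in S_0$ with $q(sx) \in W$; then $sx \in q^{-1}(W) \subseteq F^{-1} U$, so some $g \in F$ satisfies $gsx \in U$, whence $x \in (gs)^{-1} U$. Separation: given distinct $g_1 s_1, g_2 s_2 \in F S_0$, either $s_1 = s_2$ (forcing $g_1 \neq g_2$ in $F$, and then $g_1 g_2^{-1} \notin D^{-1} D$ by $D$-separation of $F$), or $s_1 \neq s_2$ (so $s_1 s_2^{-1} \notin F^{-1} D^{-1} D F$ by $DF$-separation of $S_0$, from which $g_1 s_1 s_2^{-1} g_2^{-1} \notin D^{-1} D$ follows, since otherwise $s_1 s_2^{-1} \in g_1^{-1} D^{-1} D g_2 \subseteq F^{-1} D^{-1} D F$). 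The subtle point I anticipate as the main obstacle is arranging $F$ to be $D$-separated in $G$ (not merely separated after passing to $H$, which is all one gets from $M(H)$ directly); the syndeticity of each $A_i$ in $G$, rather than merely of $\pi(A_i)$ in $H$, is precisely what provides the slack to dodge the finite forbidden region at each inductive step.
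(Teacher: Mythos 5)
Your proof is correct and follows essentially the same route as the paper's: both pass to a quotient flow carrying the SCP (you use the orbit space $M(G)/N$ as a factor of $M(H)$, the paper the induced map $M(G)\to M(H)$ coming from $\beta G \to \beta H$), use syndeticity of visiting times to choose a finite $D$-separated set $F$ moving the whole finite fiber over $q(x_0)$ into $U$, pull back a $DF$-separated cover of a shrunken neighborhood downstairs, and conclude that $FS_0$ is $D$-separated with $(FS_0)^{-1}U = M(G)$. This is the paper's argument (modelled, as in your write-up, on the proof of Proposition~\ref{p:extensions}) in only slightly different clothing.
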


\begin{proof}
  Let $\theta \colon G \to H$ denote the quotient map and note that by the universal property of $\beta G$, it extends to a map $\beta G \to \beta H$, which in turn restricts to a map $\pi \colon M(G) \to M(H)$. It is not difficult to check that $\pi$ is continuous, open, and $|N|$-to-$1$, and for every open set $U \sub M(G)$, $\pi^{-1}(\pi[U]) = NU$. 
 
  Let $U \sub M(G)$ be open and let $x_0 \in U$. Let $D \sub G$ be finite and $N$-invariant. For every $a \in N$, choose $f_a \in G$ such that $f_a  \cdot a \cdot x_0 \in U$. Moreover, choose the $f_a$ in a way that the set $F = \set{f_a : a \in N}$ is $D$-separated. Let $V$ be a neighborhood of $x_0$ such that $f_a\cdot a\cdot V \sub U$ for all $a \in N$. Using the SCP for $M(H)$, let $S' \sub H$ be a $\theta(DF)$-separated subset of $H$ such that $S'^{-1}\pi[V] = M(H)$. Let $S \sub G$ be such that $\theta|_S \colon S \to S'$ is a bijection. Then $S$ is $DF$-separated and $S^{-1}NV = M(G)$. As $F$ is $D$-separated and $S$ is $DF$-separated, we obtain that $FS$ is $D$-separated. We finally note that
  \begin{equation*}
    (FS)^{-1}U = S^{-1}F^{-1}U \supseteq S^{-1}NV = M(G). \qedhere
  \end{equation*}
\end{proof}
\vspace{1 mm}

\begin{proof}[Proof of Theorem~\ref{th:main-BDJ}]
  We will show that $M(G)$ has the SCP. (This is sufficient by Lemma~\ref{l:SDOP-implies-disj-str-irred}.) Let $F$ be the \df{FC center} of $G$ consisting of all elements of $G$ with finite conjugacy classes. Note that $F$ is a characteristic subgroup of $G$. We will consider several cases.

  First, if $F$ is finite, then $G/F$ is icc and by Corollary~\ref{c:ExistsEssentiallyFree} and Corollary~\ref{c:proximal-SDOP}, $G/F$ admits an essentially free, minimal flow with the SCP. Thus by Proposition~\ref{p:extensions}, $M(G/F)$ has the SCP and by Lemma~\ref{l:finite-index}, so does $M(G)$.

  Suppose now that $F$ is infinite and let $Z$ be the center of $F$. Note that, as $Z$ is a characteristic subgroup of $F$, it is normal in $G$. We distinguish again two cases. If $Z$ is infinite, then we are done by Proposition~\ref{p:normal-maxap}. Finally, suppose that $Z$ is finite and let $F' = F/Z$ and $G' = G/Z$. We will check that $F'$ is residually finite. Let $\set{C_i : i \in \N}$ enumerate the non-identity conjugacy classes of $F$ and note that each $C_i$ is finite. Define $\phi \colon F \to \prod_i \Sym(C_i)$ by $\phi(g) \cdot (x_i)_{i \in I} = (g x_i g^{-1})_{i \in I}$ and note that $\ker \phi = Z$. Thus $\phi$ factors to an embedding of $F'$ into the profinite group $\prod_i \Sym(C_i)$. As $F'$ is infinite and normal in $G'$, Proposition~\ref{p:normal-maxap} implies that $M(G')$ has the SCP and thus, by Lemma~\ref{l:finite-index}, so does $M(G)$.
\end{proof}
\vspace{1 mm}

Next, we show how to produce minimal flows disjoint from any given \emph{metrizable} minimal flow. We will use the following method to show that a given set is $G_\delta$. If $X$ and $Y$ are compact Hausdorff spaces and $Z \sub X \times Y$ is a $G_\delta$ set, then the set $B \sub X$ defined by
\begin{equation*}
x \in B \iff \forall y \in Y \ (x, y) \in Z
\end{equation*}
is also $G_\delta$. To see this, note that $(X\times Y)\setminus Z$ is $K_\sigma$, so letting $\pi_X \colon X\times Y\to X$ be the projection, the set $\pi_X[(X\times Y)\setminus Z]$ is $K_\sigma$, and 
$B = X\setminus \pi_X[(X\times Y)\setminus Z]$.

Recall that if $X$ is a compact space, we denote by $K(X)$ the hyperspace of closed, non-empty subsets of $X$ equipped with the Vietoris topology. For basic facts about this topology, we refer the reader to \cite{Kechris1995}*{4.F}.
\vspace{1 mm}


\begin{prop}
  \label{p:disjointness}
  Let $G$ be a countable, infinite group and let $X$ be a metrizable minimal $G$-flow. Then the set of subshifts disjoint from $X$ is $G_\delta$ in $K(A^{\N\times G})$.
\end{prop}

\begin{proof}
  Let $Z \sub A^{\N\times G}$ be a subshift. By definition, $Z$ is disjoint from $X$ iff
  \begin{equation*}
    \forall K \in K(X \times A^{\N\times G}) \quad K \text{ is not $G$-invariant} \Or \pi_2(K) \nsupseteq Z \Or K \supseteq X \times Z.
  \end{equation*}
  Here $\pi_2$ denotes the projection to the second coordinate. The first two conditions after the quantifier are open and the third is closed. As $X$ is metrizable, so is $K(X \times A^{\N\times G})$, and closed sets in the latter space are $G_\delta$. Thus the whole condition after the quantifier is $G_\delta$ and we can conclude by applying the remark preceding the proposition.
\end{proof}
\vspace{1 mm}

\begin{cor}
  \label{c:disjointness}
  For any countable, infinite group $G$ and any metrizable minimal $G$-flow $X$, the set
  \begin{equation*}
    \set{Z \in \cS(A^\N) : Z \text{ is minimal and essentially free and } Z \perp X}
  \end{equation*}
  is dense $G_\delta$ in $\cS(A^\N)$.
\end{cor}

\begin{proof}
  By Proposition~\ref{p:S-minimal-comeager}, the minimal subshifts form a dense $G_\delta$ subset of $\cS(A^\N)$; by Proposition~\ref{p:ess-free-general}, so do the essentially free ones. By Proposition~\ref{p:disjointness}, being disjoint from $X$ is a $G_\delta$ condition and by Theorem~\ref{th:main-BDJ}, it is dense in $\cS(A^\N)$. By the Baire category theorem, the intersection of those three sets is dense $G_\delta$.
\end{proof}
\vspace{1 mm}

We also have the following corollary, generalizing a result of \cite{Frisch2017} (where it was proved for amenable groups).
\vspace{1 mm}

\begin{cor}
	\label{c:str-irr-not-Gdelta}
	Let $G$ be a countable, infinite group. Then the set of strongly irreducible subshifts of $G$ is not $G_\delta$.
  \end{cor}
  \begin{proof}
    Suppose that the set of strongly irreducible subshifts is $G_\delta$. Then it is comeager in $\cS(A)$ and by Proposition~\ref{p:S-minimal-comeager} and the Baire category theorem, it follows that there exists a subshift which is both minimal and strongly irreducible. This contradicts Theorem~\ref{th:main-BDJ}.
  \end{proof}
  
\vspace{2 mm}


A collection $\set{X_i : i \in I}$ of minimal $G$-flows is called \df{mutually disjoint} if the product $\prod_{i \in I} X_i$ is minimal. Note that a collection is mutually disjoint iff every finite subcollection is. For the next corollary, we will need the fact that the spaces $\cS(A)$ and $\cS(A^\N)$ are perfect.
\vspace{1 mm}

\begin{lemma}
	\label{l:SA-perfect}
	The spaces $\cS(A)$ and $\cS(A^\N)$ do not have isolated points.
\end{lemma}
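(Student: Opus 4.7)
The plan is to show that $\cS(A)$ and $\cS(A^\N)$ have no isolated points by demonstrating that every $Z$ in these spaces admits a distinct element of the same space in every basic neighborhood. Since non-constant strongly irreducible subshifts are, by definition, dense in both spaces, any $Z$ that is not itself a non-constant strongly irreducible subshift is automatically an accumulation point of the dense set, so it suffices to treat the case where $Z$ itself is non-constant and strongly irreducible.

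For $\cS(A^\N)$, I would exploit the extra $\N$-coordinate to perturb $Z$ at high levels. Fix non-constant strongly irreducible $Z$ and a basic neighborhood specified by some $n \in \N$ and finite $F \subseteq G$, and define $\phi\colon Z \times A^{\N \times G} \to A^{\N \times G}$ by $\phi(z, w)(k, g) = z(k, g)$ for $k < n$ and $\phi(z, w)(k, g) = w(k-n, g)$ for $k \geq n$, exactly as in the proof of Proposition~\ref{p:ess-free-general}. Then $Z' := \phi(Z \times A^{\N \times G})$ is strongly irreducible, contains $Z$, and satisfies $S_{[n] \times F}(Z') = S_{[n] \times F}(Z)$. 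The inclusion $Z \subseteq Z'$ is strict except when $Z$ decomposes as the direct product of its restriction to levels $< n$ with the full shift on levels $\geq n$; in that exceptional decoupled case, one instead replaces the second factor by any proper strongly irreducible $W \subsetneq A^{[n, \infty) \times G}$ depending on the $G$-coordinate (for instance, a diagonal embedding of a non-trivial SI subshift of $A^G$ into every level $\geq n$), obtaining a distinct element of $\cS(A^\N)$ in the same neighborhood.

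For $\cS(A)$, I would apply the construction from the proof of Proposition~\ref{p:S-minimal-comeager}, which takes a non-constant strongly irreducible $Z \subseteq A^G$ and a finite $F$ and produces a minimal non-constant strongly irreducible $Z^*$ with $S_F(Z^*) = S_F(Z)$. If $Z$ fails to be minimal, then $Z^* \neq Z$ and we are done. Otherwise $Z$ is itself minimal and strongly irreducible, and I would vary the parameters of the construction --- the seed $u \in Z$ determining the pattern inserted at each marker and the witness scale of the auxiliary subshift $Y$ supplied by Lemma~\ref{l:spaced-str-irred} --- to produce additional minimal strongly irreducible subshifts sharing the same $F$-patterns.

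The principal obstacle is this final step for $\cS(A)$: verifying that the varied construction produces a subshift genuinely distinct from a fixed minimal strongly irreducible $Z$. I would address this by tracking the density of marker appearances (approximately the reciprocal of the witness size of $Y$) and comparing it with the natural frequency of the seed pattern $u|_V$ within $Z$; for parameter choices where these densities are incompatible, the resulting subshift cannot coincide with $Z$, completing the proof.
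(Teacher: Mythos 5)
Your argument for $\cS(A^\N)$ is correct and takes a genuinely different route from the paper's: stacking $Z$ over the full shift (or, in the decoupled case, over a proper strongly irreducible subshift) on the levels $\geq n$ produces a strongly irreducible subshift distinct from $Z$ with the same $(n,F)$-patterns, and this needs no input from the disjointness results. The paper instead runs a single argument for both spaces.

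For $\cS(A)$, however, there is a genuine gap, and it is precisely the idea the paper's proof turns on: \emph{a non-constant strongly irreducible subshift can never be minimal}. By Theorem~\ref{th:main-BDJ} (via Lemma~\ref{l:SDOP-implies-disj-str-irred}), every minimal flow is disjoint from every strongly irreducible subshift; if $Z$ were both, then $Z \perp Z$, which forces $Z$ to be a single fixed point, and these are excluded from $\cS(A)$. Hence the branch ``otherwise $Z$ is itself minimal and strongly irreducible'' that you attempt to handle is vacuous. (Note also that the construction in Proposition~\ref{p:S-minimal-comeager} does not output a minimal subshift, only an $(n,F)$-minimal one --- it could not do otherwise, for the reason just given.) Once you know $Z$ is not minimal, choose $n' \geq n$ and $F' \supseteq F$ such that some $z \in Z$ misses an $(n',F')$-pattern of $Z$ (you may need to enlarge the given window, since $Z$ could happen to be $(n,F)$-minimal for the particular $(n,F)$ defining the neighborhood); the construction of Proposition~\ref{p:S-minimal-comeager} applied at $(n',F')$ then yields $Z'$ in the same basic neighborhood in which every point realizes all $(n',F')$-patterns, so $Z' \neq Z$. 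Your proposed fallback --- comparing marker densities with ``the natural frequency of the seed pattern within $Z$'' --- is not a proof: such frequencies need not exist or be well defined absent unique ergodicity, and nothing in the sketch shows the perturbed subshift actually differs from $Z$. With the non-minimality observation your argument collapses to the paper's; without it, the case you cannot close is exactly the one you flag as the principal obstacle.
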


\begin{proof}
	We give the argument only for $\cS(A^\N)$, the other case being similar. Given a strongly irreducible $Z\subseteq A^{\N\times G}$, we know by Lemma~\ref{l:SDOP-implies-disj-str-irred} and Theorem \ref{th:main-BDJ} that $Z$ cannot be minimal. Therefore there are $n\in \N$ and $F\subseteq G$ finite such that for some $z\in Z$, the set of $(n, F)$-patterns appearing in $z$ is a strict subset of $S_{(n,F)}(Z)$. However, the proof of Proposition \ref{p:S-minimal-comeager} shows that there is some strongly irreducible $Z'$ with $S_{(n,F)}(Z') = S_{(n,F)}(Z)$ and so that every $(n,F)$-pattern in $S_{(n,F)}(Z')$ appears in every $z'\in Z$. In particular, $Z\neq Z'$, so $\cS(A^\N)$ has no isolated points. 
\end{proof}
\vspace{1 mm}

\begin{cor}
  \label{c:continuum-many-disjoint}
  For every countable, infinite group $G$, there exist continuum many mutually disjoint, essentially free, minimal, metrizable $G$-flows.
\end{cor}
\begin{proof}
  Let $\Xi$ denote the Polish space of essentially free, minimal subshifts in $\cS(A^\N)$ and let
  \begin{equation*}
    R_n = \set{(Z_1, \ldots, Z_n) \in \Xi^n : Z_1, \ldots, Z_n \text{ are mutually disjoint}}.
  \end{equation*}
  By Corollary~\ref{c:disjointness}, we have that
  \begin{equation*}
    \forall (Z_1, Z_2, \ldots, Z_{n-1}) \in R_{n-1} \ \forall^* Z \in \Xi \quad Z \perp Z_1 \times \cdots \times Z_{n-1}
  \end{equation*}
  (here $\forall^*$ denotes the category quantifier ``for comeagerly many''). Now the Kuratowski--Ulam theorem (see, for example, \cite{Kechris1995}*{8.41}) and an easy induction imply that $R_n$ is comeager in $\Xi^n$ for every $n$. Finally, Lemma~\ref{l:SA-perfect} and Mycielski's theorem (see, for example, \cite{Kechris1995}*{19.1}) give us a Cantor set $C \sub \Xi$ of mutually disjoint subshifts.
\end{proof}

\begin{remark}
  \label{rem:disjoint-proximal}
  The corollary above is quite flexible and by varying $\Xi$, we can add more comeager properties if desired. For example, for icc groups, this allows to construct continuum many disjoint, essentially free (or, using the method of Section~\ref{sec:from-essent-freen}, even free), minimal, \emph{proximal} flows. To see that proximality is a dense condition in $\cS(A^\N)$, note that $\bigcup_n \cS(A^n)$ is dense in $\cS(A^\N)$, so we can apply Theorem~\ref{th:FTVF}. (To view $\cS(A^n)$ as a subset of $\cS(A^\N)$, just put some fixed letter $a_0$ in the unused slots.) Proximality is also $G_\delta$, as a subshift $X \in \cS(A^\N)$ is proximal iff
  \begin{equation*}
    \forall x, y \in A^{\N \times G} \ \forall \eps > 0 \quad x, y \in X \implies \exists g \in G\ d(g \cdot x, g \cdot y) < \eps,
  \end{equation*}
  where $d$ is some fixed metric on $A^{\N \times G}$.
\end{remark}

\vspace{1 mm}

We can use the previous corollary to solve an open problem regarding the homeomorphism type of the space $M(G)$ given a countable group $G$. Recall that if $X$ is a topological space, the \emph{$\pi$-weight} of $X$, denoted $\pi w(X)$, is the least cardinal $\kappa$ for which there exists a family $\{U_i: i< \kappa\}$ of open subsets of $X$ such that for every open $V$, there is $i< \kappa$ with $U_i\subseteq V$. Balcar and B\l aszczyk show in~\cite{Balcar1990} that the space $M(G)$ is homeomorphic to the \emph{Gleason cover} of the space $2^{\pi w(M(G))}$. Here the \emph{Gleason cover} of a compact space $X$, denoted $\mathrm{Gl}(X)$, is just the Stone space of the regular open algebra of $X$ (we will revisit this construction in Section~\ref{sec:from-essent-freen}). For a countable infinite group $G$, the largest possible value of $\pi w(M(G))$ is $\mathfrak{c}$, the cardinality of the continuum; to show that this maximum is attained, it suffices to find some minimal flow $X$ with $\pi w(X) = \mathfrak{c}$. Balcar and B\l aszczyk provide such a flow for $G = \Z$, and Turek~\cite{Turek1991} gives a construction for any countable abelian $G$. 

Now let $G$ be any countable infinite group. Corollary~\ref{c:continuum-many-disjoint} provides a family $\{X_i: i< \mathfrak{c}\}$ of essentially free, minimal, metrizable flows with $X = \prod_i X_i$ minimal. In particular, each $X_i$ has a nontrivial underlying space. It follows that $\pi w(X) = \mathfrak{c}$. This proves Corollary~\ref{cor:intro:MGSpace}, which we restate below. 
\vspace{1 mm}

\begin{cor}
	\label{c:MGSpace}
	Let $G$ be a countable, infinite group. Then $M(G) \cong \mathrm{Gl}(2^\mathfrak{c})$.
\end{cor}


\section{Uncountable groups}
\label{sec:uncountable-groups}

In this section, we indicate how to remove the countability assumption on $G$ in Theorem~\ref{th:main-BDJ}.
\vspace{1 mm} 

\begin{theorem}
	\label{th:UncountableGroups}
	Let $G$ be a discrete group. Then every minimal flow has the SCP and is disjoint from the Bernoulli flow $2^G$.
\end{theorem}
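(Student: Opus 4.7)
The plan is to mirror the proof of Theorem~\ref{th:main-BDJ}, verifying that each ingredient generalizes to uncountable $G$. The structural decomposition via the FC-center $F \trianglelefteq G$ works verbatim, splitting into the same three cases: (i) $F$ is finite, so $G/F$ is icc; (ii) $F$ has infinite center $Z$, which is an infinite normal maxap subgroup of $G$; or (iii) $F$ has finite center $Z$, so that $F/Z$ is infinite, residually finite, and normal in $G/Z$, while $Z$ is finite normal in $G$ (the embedding of $F/Z$ into a product of symmetric groups used in the countable proof works for an index set of arbitrary cardinality). The auxiliary tools Proposition~\ref{p:normal-maxap}, Lemma~\ref{l:finite-index}, Proposition~\ref{p:extensions}, and Corollary~\ref{c:one-SDOP-enough} are all proved without any countability assumption; the only input that might appear to need countability, namely the free $G/H$-flow used by Proposition~\ref{p:normal-maxap}, is supplied by Lemma~\ref{l:BernoulliDensePoint}, whose proof is already written for arbitrary cardinality. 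Hence cases (ii) and (iii) transfer directly, as does case (i) whenever $G/F$ is countable. The only remaining situation is when $G/F$ is uncountable icc.

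In this remaining case, by Lemma~\ref{l:finite-index} we may replace $G$ by $G/F$ and assume $G$ itself is uncountable icc. The countable proof relied on Corollary~\ref{c:ExistsEssentiallyFree} and hence on the Frisch--Tamuz--Vahidi Ferdowsi theorem (Theorem~\ref{th:FTVF}), which is established only for countable groups. To bypass this, I would proceed by co-induction from a countable subgroup: pick any countable subgroup $H \leq G$, apply Theorem~\ref{th:main-BDJ} to $H$ together with the construction of Section~\ref{sec:from-essent-freen} to obtain an essentially free minimal $H$-flow $Y$ with the $H$-SCP, form the co-induced $G$-flow $Z := \mathrm{Ind}_H^G Y$ via a section $s \colon G/H \to G$, and let $Z' \subseteq Z$ be any $G$-minimal subflow. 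One then verifies that $Z'$ is essentially free and has the $G$-SCP. By Corollary~\ref{c:one-SDOP-enough}, this yields the SCP for every minimal $G$-flow, and disjointness from $2^G$ then follows from Lemma~\ref{l:SDOP-implies-disj-str-irred}.

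The main obstacle is verifying the $G$-SCP of the co-induced flow $Z'$: a $D$-separated $H$-cover of $Y$ need not translate into a $D$-separated $G$-cover of $Z'$, since $D$-separation in $G$ also sees the section elements $s(G/H)$. As in the proof of Proposition~\ref{p:normal-maxap}, this is handled by enlarging the finite separation parameter used when invoking $H$-SCP, so as to absorb the cocycle $\rho \colon G \times G/H \to G$ defined by $s$; the $G$-separation of a cover of $Z'$ built from an $H$-SCP cover of $Y$ together with the section image can then be verified by direct computation, and essential freeness of $Z'$ follows from essential freeness of $Y$ coupled with the icc hypothesis on $G$ (which forces $g_0$-invariant elements of $Z'$ to have empty interior for every $g_0 \neq e$).
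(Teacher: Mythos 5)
Your handling of cases (ii) and (iii), and of case (i) when $G/F$ is countable, is sound: the FC-center analysis, Lemma~\ref{l:finite-index}, Proposition~\ref{p:normal-maxap}, Proposition~\ref{p:extensions} and Corollary~\ref{c:one-SDOP-enough} indeed carry no countability hypotheses. The gap is in the one case you cannot dispatch this way, namely uncountable icc $G$, and it is genuine. Co-induction from a \emph{non-normal} countable subgroup $H \leq G$ does not behave as you need. In Proposition~\ref{p:normal-maxap} normality of $H$ is essential: it guarantees that the restriction of the co-induced action to $H$ fixes each coordinate $w \in G/H$ (acting there by the original action twisted by an automorphism), which is what lets equicontinuity, and hence the covering property, pass to $K^{G/H}$. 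For non-normal $H$, an element $h \in H$ sends the coordinate $w = gH$ to $h^{-1}gH \neq gH$, so the restriction of $\mathrm{Ind}_H^G Y$ to $H$ is not a product of copies of the $H$-action on $Y$, and the $H$-SCP of $Y$ gives no handle on it. More fundamentally, a basic open subset $U$ of the minimal subflow $Z'$ constrains finitely many coordinates $w_1,\dots,w_k$, and to cover $Z'$ by translates $s^{-1}U$ you must draw the elements $s$ from essentially all of $G$ (to reach arbitrary coordinates), not just from $H$ and the section image; nothing in your sketch makes such a set $D$-separated, and ``enlarging the separation parameter to absorb the cocycle'' is not an argument, since $\rho(\cdot,w)$ takes infinitely many values as $w$ ranges over $G/H$. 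The essential freeness of $Z'$ is likewise asserted rather than proved.

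The paper's proof is entirely different and sidesteps the FC-center analysis for uncountable groups. It uses a technique of Ellis: starting from a minimal, free $G$-flow $X$ (e.g.\ $M(G)$), one writes $X = \varprojlim X/R(L,\rho)$ as an inverse limit of metrizable, minimal, free $L$-flows over countable subgroups $L \leq G$, where $R(L,\rho) = \{(x,y) : \rho(lx,ly) = 0 \text{ for all } l \in L\}$; the two countable closure steps (making the quotient minimal, due to Ellis, and making it free, as in Proposition~\ref{p:EssFreeToFree}) are alternated countably many times. Given a finite $D \subseteq G$ and open $U \subseteq X$, one may assume $U$ is pulled back from such a quotient with $D \subseteq L$, and Theorem~\ref{th:main-BDJ} applied to the countable group $L$ produces the required finite $D$-separated cover, which lifts to $X$; Corollary~\ref{c:one-SDOP-enough} then finishes. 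If you wish to repair your route rather than adopt this one, some reduction to countable subgroups of this kind (the paper also notes a L\"owenheim--Skolem alternative) appears unavoidable in the icc case; co-induction alone will not do it.
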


\begin{proof}
	We use a technique introduced by Ellis in~\cite{Ellis1978}. Let $X$ be a minimal, free $G$-flow. If $K\subseteq G$ is a countable subgroup and $\rho$ is a continuous pseudometric on $X$, then 
	$$R(K, \rho) := \{(x, y)\in X^2: \rho(kx, ky) = 0 \text{ for all } k\in K\}$$
	is a closed equivalence relation, and the quotient $X/R(K, \rho)$ is a metrizable $K$-flow. It was proved in \cite{Ellis1978}*{Proposition~1.6} that for any fixed $\rho$ and any countable subgroup $K\subseteq G$, there is a countable subgroup $L$ with $K\subseteq L\subseteq G$ such that $X/R(L, \rho)$ is minimal. Also, it follows from the proof of \cite{MatteBon2020}*{Proposition~2.9} (see also Proposition~\ref{p:EssFreeToFree} below) that for any fixed $K$ and any $\rho_1$, there is a continuous pseudometric $\rho_2$ such that $R(K, \rho_2) \sub R(K, \rho_1)$ with $X/R(K, \rho_2)$ a free $K$-flow. Alternating these steps countably many times, we see that for every countable subgroup $K\subseteq G$ and every continuous pseudometric $\rho_1$ on $X$, there are a countable supergroup $L\supseteq K$ contained in $G$ and a finer continuous pseudometric $\rho_2$ with $R(\rho_2, L) \sub R(\rho_1, K)$ such that $X/R(L, \rho_2)$ is minimal and free. 
	
	Whenever $K\subseteq L$ are countable subgroups of $G$ and $\rho_1$ and $\rho_2$ are continuous pseudometrics on $X$ with $R(\rho_2, L) \sub R(\rho_1, K)$, there is a natural $K$-map $\pi(L, \rho_2, K, \rho_1) \colon X/R(L, \rho_2)\to X/R(K,\rho_1)$. We can now write 
	$$X = \varprojlim X/R(L, \rho)$$
	where the inverse limit is taken over all pairs $(L, \rho)$ with $X/R(L, \rho)$ a minimal free $L$-flow. For each pair $(L, \rho)$ appearing in the inverse limit, we also have a natural $L$-map $\pi(L, \rho) \colon X\to X/R(L, \rho)$.
	
	Now let $D\subseteq G$ be finite and $U\subseteq X$ be non-empty open. We may assume that $U = \{x\in X: \pi(L, \rho)(x)\in V\}$ for some non-empty open $V\subseteq X/R(L, \rho)$ with $D\subseteq L$. Use Theorem~\ref{th:main-BDJ} to find a finite $D$-separated $S\subseteq L$ with $S^{-1}V = X/R(L, \rho)$. It follows that $S^{-1}U = X$ as desired. 
  \end{proof}

  \begin{remark}
    \label{rem:elem-substructure}
    The reduction to the countable case can also be carried out by an elementary substructure argument using the Löwenheim--Skolem theorem and the fact that minimality, freeness, and the SCP can be expressed by first-order sentences in appropriate structures.
  \end{remark}
  
\vspace{1 mm}


\section{From essential freeness to freeness}
\label{sec:from-essent-freen}

In this section, we discuss a general method to produce free flows out of essentially free ones, keeping many important properties. This will allow us to improve several of the results of the previous sections.

If $Y$ and $X$ are $G$-flows, $\phi \colon Y\to X$ is a surjective $G$-map, and $B \sub Y$, we define the \df{fiber image of B} by
\begin{equation*}
  \phi_{\fib}(B):= \{x\in X: \phi^{-1}(\{x\})\subseteq B\}.
\end{equation*}
Note that if $B$ is open, $\phi_\fib(B)$ is also open. We say that $\phi$ is \emph{highly proximal} if $X$ is minimal and $\phi_{\fib}(B)$ is non-empty for every non-empty open $B\subseteq Y$. Equivalently, $\phi$ is highly proximal iff for every $x \in X$ and $U \sub Y$ open, there exists $g \in G$ such that $g \cdot \phi^{-1}(\set{x}) \sub U$.

The following are some basic properties of highly proximal extensions.
\vspace{1 mm}

\begin{prop}
  \label{p:highly-proximal-basic}
  The following statements hold:
  \begin{enumerate}
  \item \label{i:p:hpb:1} Let $\phi \colon Y \to X$ be a highly proximal extension. Then $Y$ is minimal and $\phi$ is a proximal extension. In particular, if $X$ is proximal, then $Y$ is also proximal.

  \item \label{i:p:hpb:2} Let $\phi \colon Y \to X$ be a highly proximal extension with $X$ strongly proximal. Then $Y$ is strongly proximal.
    
  \item \label{i:p:hpb:3} Let $X_1, X_2, \ldots, X_n$ be minimal $G$-flows which are mutually disjoint and let $X_i' \to X_i$ be highly proximal extensions. Then $X_1', \ldots, X_n'$ are also mutually disjoint.
  \end{enumerate}
\end{prop}

\begin{proof}
  \ref{i:p:hpb:1} Let $y_1, y_2 \in Y$ with $\phi(y_1) = \phi(y_2) = x$. Let $B\subseteq Y$ be non-empty open. Then $\phi_{\fib}(B)$ is also non-empty open, so by minimality, we can find $g\in G$ with $gx\in \phi_{\fib}(B)$. In particular, we have $gy_1, gy_2\in B$, showing that $y_1$ and $y_2$ are proximal and that $Y$ is minimal.

  \ref{i:p:hpb:2} It is enough to show that for every open $U \sub Y$, every probability measure $\mu$ on $Y$, and every $\eps > 0$, there exists $g \in G$ such that $(g_*\mu)(U) > 1 - \eps$. Let $\nu = \phi_*\mu$ and $V = \phi_\fib(U)$. As $X$ is strongly proximal and minimal and $V \neq \emptyset$, there exists $g \in G$ such that $(g_* \nu)(V) > 1 - \eps$. As $\phi^{-1}(V) \sub U$, we are done.

  \ref{i:p:hpb:3} It is easy to check that the extension $\prod_i X_i' \to \prod_i X_i$ is highly proximal. By hypothesis, $\prod_i X_i$ is minimal; \ref{i:p:hpb:1} implies that so is $\prod_i X_i'$.
\end{proof}
\vspace{1 mm}

Auslander and Glasner~\cite{Auslander1977} prove the existence and uniqueness of a \emph{universal highly proximal extension} for minimal flows. Given a minimal flow $X$, there is a $G$-flow $\tS_G(X)$ and a highly proximal $G$-map $\pi_X \colon \tS_G(X)\to X$ through which every other highly proximal map to $X$ factors. An explicit construction of $\tS_G(X)$ is provided in \cite{Zucker2018}. When $G$ is discrete, $\tS_G(X)$ is just the Stone space of the Boolean algebra of regular open sets in $X$ and the map $\pi_X \colon \tS_G(X) \to X$ sends $p\in \tS_G(X)$ to the unique $x\in X$ such that every regular open neighborhood of $x$ is a member of $p$. Notice that since the regular open algebra of $X$ is a complete Boolean algebra, the space $\tS_G(X)$ is extremally disconnected (and thus never metrizable if $X$ is infinite). We will need the following fact about extremally disconnected spaces.
\vspace{1 mm}

\begin{fact}[Frolík~\cite{Frolik1971}]
  \label{ExtremallyDisc}
  Let $Z$ be a compact extremally disconnected space, and let $f \colon Z\to Z$ be a homeomorphism. Then the set of fixed points of $f$ is clopen.
\end{fact}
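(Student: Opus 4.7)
The fixed-point set $F = \{z \in Z : f(z) = z\}$ is automatically closed, being the equalizer of the two continuous maps $f, \mathrm{id}_Z \colon Z \to Z$ into the Hausdorff space $Z$. The content of the statement is therefore that $F$ is also open; equivalently, that the open set $A := Z \setminus F$ is closed.

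The first step is a key lemma: for every $z \in A$, there is a clopen neighborhood $U \ni z$ with $U \cap f(U) = \emptyset$. I would prove this by first using Hausdorffness to separate $z$ from $f(z)$ by disjoint open sets $V \ni z$ and $V' \ni f(z)$, and then setting $W := V \cap f^{-1}(V')$, which is an open neighborhood of $z$ satisfying $W \cap f(W) \subseteq V \cap V' = \emptyset$. At this point I would invoke the characteristic property of extremal disconnectedness: disjoint open sets have disjoint clopen closures. Thus $\overline{W} \cap \overline{f(W)} = \emptyset$, and since $f$ is a homeomorphism, $\overline{f(W)} = f(\overline{W})$, so $U := \overline{W}$ is the desired clopen neighborhood of $z$.

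To conclude, I would invoke a three-colouring argument in the spirit of Kat\v{e}tov: since $f$ restricts to a fixed-point-free homeomorphism of the extremally disconnected space $A$, the cover of $A$ by clopen-in-$A$ sets $U$ with $U \cap f(U) = \emptyset$ can be refined to a covering $B_1, B_2, B_3$ by three such sets. Taking closures in $Z$, the disjoint-closures property again gives $\overline{B_i} \cap f(\overline{B_i}) = \emptyset$, so no point of $\overline{B_i}$ is a fixed point of $f$; hence $\overline{B_i} \subseteq A$. This forces each $B_i$ to already coincide with its closure in $Z$, so each $B_i$ is clopen in $Z$. Therefore $A = B_1 \cup B_2 \cup B_3$ is clopen, and $F$ is clopen as well.

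The main obstacle is the three-colouring step. The cleanest route is to quote the Kat\v{e}tov--de Groot theorem bounding the chromatic number of a continuous fixed-point-free self-map. Alternatively, one can argue directly via Zorn's lemma inside the complete Boolean algebra of clopens of $Z$: choose a maximal pairwise disjoint family of clopens $U \subseteq A$ satisfying $U \cap f(U) = \emptyset$, observe that by maximality and extremal disconnectedness its union is dense in $A$, and then use the ambient Boolean algebra structure to partition the family into three colour classes whose unions furnish the required clopens $B_i$.
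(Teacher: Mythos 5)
The paper does not prove this statement; it quotes it from Frol\'{i}k's paper as a known fact, so there is no internal argument to compare against. Your overall strategy is the standard one, and your first two steps are correct and complete: $F$ is closed as the equalizer of two continuous maps, and for each $z \notin F$ one obtains a clopen neighbourhood $U$ with $U \cap f(U) = \emptyset$ exactly as you describe, using that disjoint open sets in an extremally disconnected space have disjoint clopen closures and that $\overline{f(W)} = f(\overline{W})$.

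The gap is in the concluding colouring step, and neither of your two escape routes closes it as written. The citation route is shaky because $A = Z \setminus F$ is only an open subspace of $Z$ --- locally compact, but in general not normal or paracompact (think of open subspaces of $\beta\N$) --- so the topological colouring theorems for fixed-point-free homeomorphisms (Kat\v{e}tov's three-set lemma is purely set-theoretic; the clopen versions due to van Douwen and others assume compactness or paracompactness) do not apply to $A$ off the shelf. The Zorn route, as you sketch it, asks to split a maximal pairwise disjoint family $\{U_i\}$ of clopens with $U_i \cap f(U_i) = \emptyset$ into three colour classes whose unions are still moved off themselves; but $f(U_i)$ may meet $U_j$ for $i \neq j$, so this amounts to $3$-colouring the graph with an edge $\{i,j\}$ whenever $f(U_i) \cap U_j \neq \emptyset$, and nothing in your construction bounds the chromatic number of that graph. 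The standard fix is to run Zorn on single sets rather than on families: take $U$ maximal among open sets with $U \cap f(U) = \emptyset$ (unions of chains again have this property, and maximality together with extremal disconnectedness forces $U = \overline{U}$, so $U$ is clopen). If some $z$ lay outside $U \cup f(U) \cup f^{-1}(U) \cup F$, your key lemma would give a clopen $W \ni z$ with $W \cap f(W) = \emptyset$; replacing $W$ by $W \setminus \bigl(U \cup f(U) \cup f^{-1}(U)\bigr)$, one checks directly that $(U \cup W) \cap f(U \cup W) = \emptyset$, contradicting maximality. Since each of $U$, $f(U)$, $f^{-1}(U)$ contains no fixed point, this yields $A = U \cup f(U) \cup f^{-1}(U)$, which is clopen --- and is precisely the three-colouring you were after, obtained for free rather than imported.
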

\vspace{1 mm}

The following is our main tool for producing free flows.
\vspace{1 mm}

\begin{prop}
  \label{p:EssFreeToFree}
  Let $X$ be a minimal, essentially free $G$-flow. Then $\tS_G(X)$ is free. Moreover, when $G$ is countable, $\tS_G(X)$ admits a metrizable factor which is also free.
\end{prop}

\begin{proof}
  The map $\pi_X \colon \tS_G(X)\to X$, being a morphism of minimal flows, is quasi-open (see the discussion before Proposition~\ref{p:extensions}). It follows that $\tS_G(X)$ must also be essentially free. By Fact~\ref{ExtremallyDisc}, this can only happen if $\tS_G(X)$ is free.

  For the second assertion, proceed as follows. For each non-identity $g\in G$, use the freeness of $\tS_G(X)$ and compactness to find a finite clopen partition $P_g$ of $S_G(X)$ so that for each $A \in P_g$, we have $gA\cap A = \emptyset$. Then use $\{P_g: g\in G\}$ to generate a $G$-invariant subalgebra of the clopen algebra of $\tS_G(X)$. As this subalgebra is countable, the corresponding factor will be metrizable and free.  
\end{proof}
\vspace{1 mm}

Now we can prove the theorems from the introduction that require free flows.
\vspace{1 mm}

\begin{proof}[Proof of Theorem~\ref{th:intro:free-proximal}]
  Let $G$ be icc. By Corollary~\ref{c:ExistsEssentiallyFree}, there exists an essentially free, minimal, proximal flow $G \actson X$. By Proposition~\ref{p:EssFreeToFree}, there exists a metrizable, free, highly proximal extension $X' \to X$. By Proposition~\ref{p:highly-proximal-basic}, $X'$ is minimal and proximal.
\end{proof}
\vspace{1 mm}

\begin{proof}[Proof of Theorem~\ref{th:intro:free-disjoint}]
  \ref{i:thi:free-disjoint-1} By Corollary~\ref{c:disjointness}, there exists an essentially free, minimal, metrizable $Y \perp X$. By Proposition~\ref{p:EssFreeToFree}, there is a metrizable, free, highly proximal extension $Y' \to Y$. By Proposition~\ref{p:highly-proximal-basic}, $Y'$ is minimal and disjoint from $X$.
  
  \ref{i:thi:free-disjoint-2} By Corollary~\ref{c:continuum-many-disjoint}, there exists a family $\set{X_i : i \in 2^{\aleph_0}}$ of essentially free, minimal, metrizable, mutually disjoint $G$-flows. By Proposition~\ref{p:EssFreeToFree}, there exist free, metrizable, highly proximal extensions $X_i' \to X_i$. By Proposition~\ref{p:highly-proximal-basic}, the $X_i'$ are minimal and mutually disjoint.
\end{proof}

We end this section with an application to the universal proximal and strongly proximal flows. The result about $\Pi_s(G)$ was proved in \cite{Breuillard2017} by a different method. 
\begin{prop}
  \label{p:ext-disconnected}
  Let $G$ be a discrete group. Then $\Pi(G)$ and $\Pi_s(G)$ are extremally disconnected.
\end{prop}
\begin{proof}
  It follows from Proposition~\ref{p:highly-proximal-basic} and the universality of $\Pi(G)$ and $\Pi_s(G)$ that  $\tS_G(\Pi(G)) = \Pi(G)$ and $\tS_G(\Pi_s(G)) = \Pi_s(G)$. As universal highly proximal extensions are always extremally disconnected, we have the result.
\end{proof}


\section{The algebra of minimal functions}
\label{sec:min-functions}

Recall that a function $\phi \in \ell^\infty(G)$ is called \df{minimal} if there exists a minimal flow $X$ and a point $x_0 \in X$ such that
\begin{equation}
  \label{eq:gen-fA}
  \phi(g) = f(g \cdot x_0), \quad \text{ for some } f \in C(X).
\end{equation}
Note that in the above definition, instead of considering all possible minimal $G$-flows $X$, we can restrict ourselves to the universal minimal flow $M(G)$. We denote by $\fA(G)$ the closed subalgebra of $\ell^\infty(G)$ generated by all minimal functions.

Next, we explain how to view the algebra $\fA(G)$ as the algebra of functions on a certain compactification of $G$. Denote by $E = E(M(G))$ the \df{enveloping semigroup} of the minimal flow $G \actson M(G)$, i.e., the closure of $G$ in the compact space $M(G)^{M(G)}$ equipped with the product topology. $E$ is a semigroup with the operation of composition of maps and it is \df{right topological}, i.e., for all $q \in E$, the map $E \ni p \mapsto pq \in E$ is continuous. $E$ also acts naturally on $M(G)$ by evaluation. There is a canonical map $G \to E$ given by the action $G \actson M(G)$, which is injective because the action is free (see \cite{Auslander1988}*{Chapter~8}). Thus we can identify $G$ with a subset of $E$. This gives us the following alternative representation of $\fA$.
\vspace{1 mm}

\begin{prop}
  \label{p:env-semigroup-M}
  $\fA(G) = \set{f|_G : f \in C(E)}$.
\end{prop}

\begin{proof}
  For simplicity of notation, we identify $C(E)$ with a subalgebra of $\ell^\infty(G)$ by the inclusion $f \mapsto f|_G$. Suppose first that $\phi \in \ell^\infty(G)$ is as given by \eqref{eq:gen-fA}. Then there exists $x \in M(G)$ and $f \in C(M(G))$ such that $\phi(g) = f(gx)$. By the definition of the topology of $E$, $\phi$ extends to a continuous function on $E$ by the formula $\phi(p) = f(px)$ for $p \in E$. As $C(E)$ is a closed subalgebra of $\ell^\infty(G)$, this gives us that $\fA(G) \sub C(E)$. The reverse inclusion follows from the Stone--Weierstrass theorem: if $p_1 \neq p_2 \in E$, there exists $x \in M(G)$ and $f \in C(M(G))$ such that $f(p_1x) \neq f(p_2x)$ and thus the functions of the form \eqref{eq:gen-fA} separate points in $E$.
\end{proof}
\vspace{1 mm}

We let $\Omega = 2^G$ and identify it with the power set of $G$. We will consider it as a Boolean ring, with addition being the operation $\oplus$ of symmetric difference and multiplication that of intersection. Note that the shift action $G \actson \Omega$ is by ring automorphisms. We continue to call $G$-invariant, closed subsets of $\Omega$ \df{subshifts}. If $X$ and $Y$ are subshifts, we let $X \oplus Y = \set{x \oplus y : x \in X, y \in Y}$ and $XY = \set{xy : x \in X, y \in Y}$. Note that $X \oplus Y$ and $XY$ are also subshifts.

In what follows, it will be more convenient to work with subrings of $\Omega$ rather than subalgebras of $\ell^\infty(G)$. We can identify $\Omega$ with the set of $\{0, 1\}$-valued functions
in $\ell^\infty(G)$ by viewing elements of $\Omega$ as characteristic functions. This gives a natural functorial correspondence between subrings of $\Omega$ and closed subalgebras of $\ell^\infty(G)$ generated by projections, such as $\fA$ (see Lemma~\ref{l:alm-periodic-points-generate}).

Following Furstenberg~\cite{Furstenberg1967}, we will say that a subshift $X \sub \Omega$ is \df{restricted} if for every proper subshift $Y \subsetneq \Omega$, we have that $X \oplus Y \neq \Omega$. Note that if $X \sub Y$ are subshifts and $Y$ is restricted, then so is $X$. It is also clear that if $X$ and $Y$ are restricted, then so is $X \oplus Y$. A point $z \in \Omega$ is \df{restricted} if the subshift $\cl{G \cdot z}$ is restricted. The arguments of \cite{Furstenberg1967} give us the following.
\vspace{1 mm}

\begin{prop}
  \label{p:Furstenberg}
  Let $G$ be an infinite discrete group and let $\Omega$ be as above. Then the following hold:
  \begin{enumerate}
  \item Every minimal subshift is restricted.
  \item If $X$ is a minimal subshift and $Y$ is restricted, then $XY$ is restricted.
  \end{enumerate}
\end{prop}

\begin{proof}
  These two facts are proved in Theorem~III.1 and Proposition~III.2 of \cite{Furstenberg1967}, respectively. While Furstenberg only states these results for $G = \Z$, the only thing about $\Z$ that is used in the proofs is that $\Omega$ is disjoint from all of its minimal subshifts, which holds for all $G$ as we proved in Theorem~\ref{th:main-BDJ}.
\end{proof}
\vspace{1 mm}

Recall that a point $u \in \Omega$ is called \df{minimal} if $\cl{G \cdot u}$ is a minimal subshift. Let $\fB \sub \Omega$ denote the ring generated by  all minimal points. In view of the correspondence alluded to above, we have the following.
\vspace{1 mm}

\begin{lemma}
  \label{l:alm-periodic-points-generate}
  $\fA$ is the closed algebra of $\ell^\infty(G)$ generated by $\fB$ and $\fB = \fA \cap \Omega$. In particular, to prove that $\fA \subsetneq \ell^\infty(G)$, it suffices to show that $\fB \subsetneq \Omega$.
\end{lemma}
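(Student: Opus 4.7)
The plan is to prove the two inclusions of $\fB = \fA \cap \Omega$ separately; the final assertion of the lemma then follows at once. For $\fB \subseteq \fA \cap \Omega$, the essential observation is that any minimal point $u \in \Omega$ is itself a minimal function on $G$: with $X = \overline{G \cdot u}$ and $f(z) = z(e)$, we have $u(g) = (g \cdot u)(e) = f(g \cdot u)$. Since $\fA$ is a closed subalgebra of $\ell^\infty(G)$ and the Boolean operations correspond to polynomials via $1_{A \cap B} = 1_A 1_B$ and $1_{A \oplus B} = 1_A + 1_B - 2 \cdot 1_A 1_B$, the whole ring $\fB$ generated by minimal points lies in $\fA$.

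For the harder inclusion $\fA \cap \Omega \subseteq \fB$, I would first establish that $\fA$ equals the closed $\R$-linear span of $\fB$ inside $\ell^\infty(G)$. The key input is that $M(G)$ is extremally disconnected: realizing $M(G)$ as a minimal left ideal $L$ of $\beta G$, right multiplication by an idempotent of $L$ is a continuous retraction $\beta G \to L$, and since $\beta G$ is extremally disconnected for discrete $G$, so is $L \cong M(G)$. Given any minimal function $\phi(g) = f(g \cdot x_0)$, since every minimal $G$-flow is a factor of $M(G)$, I may assume $x_0 \in M(G)$ and $f \in C(M(G))$. Zero-dimensionality lets me approximate $f$ uniformly by simple functions $\sum_i c_i 1_{B_i}$ with $B_i \subseteq M(G)$ clopen. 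For each such clopen $B$, the map $M(G) \to \Omega$ sending $x \mapsto 1_{\Vis(x, B)}$ is continuous and $G$-equivariant with minimal image, so $g \mapsto 1_B(g \cdot x_0)$ is a minimal point of $\Omega$ and lies in $\fB$. Hence $\phi \in \overline{\mathrm{span}_\R(\fB)}$.

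With this established, let $1_A \in \fA$ and choose $h = \sum_i c_i 1_{C_i}$ with $C_i \in \fB$ and $\|1_A - h\|_\infty < 1/3$. Since $1_G$ is a (constant) minimal point, $\fB$ contains $G$ itself and is thus a Boolean algebra, so the finitely many $C_i$ can be refined into a disjoint partition $\{D_j\}$ still lying in $\fB$; rewriting $h = \sum_j d_j 1_{D_j}$, the estimate forces $A = \bigcup\{D_j : d_j \geq 1/2\}$, a finite union of elements of $\fB$, hence $A \in \fB$. The ``in particular'' is immediate: $\fA = \ell^\infty(G)$ would force $\Omega = \fA \cap \Omega = \fB$, so the contrapositive $\fB \subsetneq \Omega \Rightarrow \fA \subsetneq \ell^\infty(G)$ holds. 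The principal obstacle is the use of extremal disconnectedness of $M(G)$ to reduce continuous functions on $M(G)$ to indicators of clopens---this is what bridges the abstract minimal-function side with the explicit Boolean-ring structure of $\Omega$.
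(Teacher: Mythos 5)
Your proof is correct, and its first two steps coincide with the paper's: minimal points of $\Omega$ are minimal functions (so $\fB\sub\fA$), and zero-dimensionality of $M(G)$ lets you approximate any $f\in C(M(G))$ by simple functions over clopen sets, whose associated visiting-time points $1_{\Vis(x_0,B)}$ are minimal, so $\fA$ is generated by $\fB$. Two remarks on where you diverge. First, for zero-dimensionality the paper simply observes that $M(G)$ sits inside $\beta G$ as a closed subspace; your detour through extremal disconnectedness (via the idempotent retraction $p\mapsto pu$ onto a minimal left ideal) is valid but more than is needed here. Second, and more substantively, for the inclusion $\fA\cap\Omega\sub\fB$ the paper identifies the Gelfand space of $\fA$ with the enveloping semigroup $E(M(G))$ (Proposition~\ref{p:env-semigroup-M}) and then invokes Stone duality: $\fB$ separates points of the zero-dimensional space $E$, hence equals its full clopen algebra, which is the set of projections in $\fA$. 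You instead give a direct $\tfrac13$-approximation argument: approximate $1_A$ within $1/3$ by a $\fB$-linear combination, pass to the atoms $D_j$ of the finite Boolean subalgebra of $\fB$ they generate (legitimate, since $1_G\in\fB$ makes $\fB$ a Boolean subalgebra of $\Omega$), and read off $A=\bigcup\{D_j: d_j\ge 1/2\}$. This is essentially an elementary, self-contained proof of the Boolean Stone--Weierstrass fact the paper gets from duality; it buys independence from Proposition~\ref{p:env-semigroup-M} at the cost of a slightly longer computation. Both arguments are sound.
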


\begin{proof}
  Let first $u \in \Omega$ be a minimal point. Let $f_e \in C(\Omega)$ be defined by $f_e(x) = x(e_G)$. Then putting $Z = \cl{G \cdot u}$, we see that $u(g) = f_e(g \cdot u)$ and thus $u \in \fA$. So we conclude that $\fB \sub \fA$. Thus, denoting by $\fA'$ the closed algebra generated by $\fB$, we have $\fA' \sub \fA$.

 To prove the reverse inclusion, recall that $M(G)$, the universal minimal flow of $G$, can be represented as a subset of $\beta G$. In particular, clopen sets separate points in $M(G)$ and thus their characteristic functions generate a dense subalgebra of $C(M(G))$. Let $U \sub M(G)$ be clopen, let $x_0 \in M(G)$ and let $u \in \Omega$ be defined by $u(g) = \chi_U(g \cdot x_0)$ as in \eqref{eq:gen-fA}. It suffices to show that every such $u$ is an element of $\fB$.  
 Define the $G$-map $\pi \colon M(G) \to \Omega$ by $\pi(x)(g) = \chi_U(g \cdot x)$ and note that $u = \pi(x_0)$. Thus the flow $\cl{G \cdot u} = \pi(M(G))$ is minimal and $u \in \fB$.

 It only remains to show that $\fA \cap \Omega \sub \fB$. Observe that $\fA \cap \Omega$ is exactly the set of projections in $\fA$ and it forms a subring of $\Omega$. (Note that for $p, q \in \Omega$, $p \oplus q = p + q - 2pq$; here $+$ and $-$ refer to the operations in the algebra $\ell^\infty(G)$.) It follows from Proposition~\ref{p:env-semigroup-M} that the Gelfand space of the algebra $\fA$ is $E$. The fact that $\fA$ is generated by $\fB$ means that the elements of $\fB$ separate the points of $E$ and $E$ is zero-dimensional; by Stone duality, this implies that $\fB$ corresponds to the ring of clopen subsets of $E$, i.e., the ring of projections in $\fA$.
\end{proof}
\vspace{1 mm}

This leads us to the main theorem of this section.
\vspace{1 mm}

\begin{theorem}
  \label{th:Furst-conjecture}
  Let $G$ be an infinite discrete group. Then $\fA(G) \subsetneq \ell^\infty(G)$.
\end{theorem}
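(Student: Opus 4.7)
My plan is to follow the strategy Furstenberg used for $\Z$, now made available for arbitrary $G$ by the Bernoulli disjointness theorem (Theorem~\ref{th:main-BDJ}) via Proposition~\ref{p:Furstenberg}. By Lemma~\ref{l:alm-periodic-points-generate}, it suffices to exhibit some $z\in\Omega$ which is not in the Boolean subring $\fB$ generated by the minimal points. The idea is to show that every element of $\fB$ lies in a \emph{restricted} subshift, and then to produce some $z\in\Omega$ whose orbit closure is all of $\Omega$, which will be a contradiction since $\Omega$ itself is visibly not restricted: for any non-empty proper subshift $Y\subsetneq \Omega$ (e.g.\ $Y=\{G\}$), one has $\Omega\oplus Y=\Omega$.

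First, I would record a small stability lemma: if $X_1,X_2\sub\Omega$ are restricted subshifts, then so is $X_1\oplus X_2$. Indeed, if $(X_1\oplus X_2)\oplus Z=\Omega$ for a subshift $Z\subsetneq\Omega$, then rewrite this as $X_1\oplus(X_2\oplus Z)=\Omega$; restrictedness of $X_1$ forces $X_2\oplus Z=\Omega$, and then restrictedness of $X_2$ forces $Z=\Omega$, a contradiction. Combining this with Proposition~\ref{p:Furstenberg}, I would observe that for any finite list $X_1,\dots,X_n$ of minimal subshifts, every subshift of the form
\[
\bigoplus_{S\subseteq\{1,\dots,n\}} c_S\,\prod_{i\in S} X_i\qquad (c_S\in\{0,1\})
\]
is restricted: each non-zero summand with $S\neq\emptyset$ is an iterated product of minimal subshifts, hence restricted by induction on $|S|$ using Proposition~\ref{p:Furstenberg}(ii), the constant summand (when $S=\emptyset$, $c_S=1$) is the minimal subshift $\{G\}$, and symmetric differences of restricted subshifts are restricted by the lemma above.

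Next, any $b\in\fB$ is a ring polynomial in finitely many minimal points $u_1,\dots,u_n$; using the Boolean identities in $\Omega$, I can put $b$ in the normal form $b=\bigoplus_S c_S\prod_{i\in S}u_i$. Setting $X_i=\cl{G\cdot u_i}$, each of which is a minimal subshift, we then have $b\in\bigoplus_S c_S\prod_{i\in S}X_i$, and this ambient subshift is restricted by the previous paragraph. Hence $\cl{G\cdot b}$ is a subshift of a restricted subshift, and so is itself restricted.

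Finally, by Lemma~\ref{l:BernoulliDensePoint} there exists $z\in 2^G=\Omega$ with dense orbit, so $\cl{G\cdot z}=\Omega$. Since $\Omega$ is not restricted (as noted in the first paragraph), $z$ cannot belong to $\fB$, and therefore $\fB\subsetneq\Omega$. By Lemma~\ref{l:alm-periodic-points-generate} this yields $\fA(G)\subsetneq\ell^\infty(G)$, proving the theorem. The only nontrivial ingredient is Proposition~\ref{p:Furstenberg}, which is where Theorem~\ref{th:main-BDJ} (Bernoulli disjointness) is really used; everything else is a bookkeeping argument on the Boolean ring $\Omega$.
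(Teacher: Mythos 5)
Your proposal is correct and follows essentially the same route as the paper: every element of $\fB$ is a sum of products of minimal points and hence lies in a subshift of the form $\bigoplus_S c_S\prod_{i\in S}\cl{G\cdot u_i}$, which is restricted by Proposition~\ref{p:Furstenberg} together with the closure of restrictedness under $\oplus$ and subshifts, while a point with dense orbit cannot be restricted. Your write-up merely makes explicit the induction and the Boolean normal form that the paper leaves implicit.
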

\begin{proof}
  Let $R \sub \Omega$ denote the set of all restricted points. Note that, as no point whose orbit is dense in $\Omega$ belongs to $R$, $R$ is meager. Following \cite{Furstenberg1967}, we see that $\fB \sub R$ (which is enough by Lemma~\ref{l:alm-periodic-points-generate}). Indeed, every point in $\fB$ is of the form $u = \sum_{i=1}^n \prod_{j=1}^{k_i} z_{ij}$ with $z_{ij}$ minimal. Setting $Z_{ij} = \cl{G \cdot z_{ij}}$, we observe that $u \in \sum_{i=1}^n \prod_{j=1}^{k_i} Z_{ij}$ and the latter set is restricted by Proposition~\ref{p:Furstenberg}.
\end{proof}
\vspace{1 mm}

Passing to the duals and combining with Proposition~\ref{p:env-semigroup-M}, the inclusion $\fA(G) \sub \ell^\infty(G)$ yields a surjective map $\beta G \to E(M(G))$. Theorem~\ref{th:Furst-conjecture} then translates to the following.
\vspace{1 mm}

\begin{cor}
  \label{c:env-semigroup-different}
  Let $G$ be an infinite discrete group. Then the natural map $\beta G \to E(M(G))$ is not injective.
\end{cor}


\vspace{2 mm}

We end the section with a question inspired by our techniques. Given a countable discrete group $G$ and $S\subseteq G$, let us say that $S$ is a \df{dense orbit set} if for any minimal $G$-flow $X$ and any $x\in X$, the set $S\cdot x\subseteq X$ is dense. For instance, it follows from Theorem~\ref{th:main-BDJ} that if $Y\subseteq A^{\N\times G}$ is strongly irreducible, $U\subseteq Y$ is non-empty open, and $y\in Y$ has dense orbit, then $\Vis(y, U)\subseteq G$ is a dense orbit set.

\begin{question}
  \label{que:DenseOrbit}
  Characterize the dense orbit sets in countable, discrete groups.
\end{question}

\vspace{2 mm}


\section{The ideal of small sets}\label{sec:small-sets}

In \cite{Glasner1983}, the authors showed that the inclusion $\fA(\Z) \sub \ell^\infty(\Z)$ is proper (again using Furstenberg's work) by proving a more precise result characterizing the \df{interpolation sets} for the algebra $\fA(\Z)$. Given a norm closed, translation invariant subalgebra $\mathcal{A}$ of $\ell^\infty(G)$ containing the constant functions, we say that a subset $A \sub G$ is an 
\df{$\mathcal{A}$-interpolation set} if every bounded real-valued function on 
$A$ can be extended to a function in $\mathcal{A}$. We write $\mathcal{I}_{\mathcal{A}}= \mathcal{I}_{\mathcal{A}}(G)$ for the collection of all $\mathcal{A}$-interpolation sets.

\begin{defn}\label{small}
A subset $B \sub G$ is called \df{small} if the unique minimal subsystem 
in the orbit closure $\overline{G \cdot 1_B} \sub \Omega = \{0,1\}^G$ is
the singleton ${\bf 0}=1_\emptyset$. Equivalently, $B\subseteq G$ is small iff for every finite $F\subseteq G$, the set $\{g\in G: Fg\cap B = \emptyset\}$ is syndetic.  
\end{defn}

The small sets form an \df{ideal} on $G$, that is, the collection of small sets is closed under taking subsets and finite unions. (See, for example, \cite{Bergelson1998} for a proof. In the terminology of \cite{Bergelson1998}, the small sets are precisely the sets which are not \emph{piecewise syndetic}.)

In \cite{Glasner1983}, the authors characterize the interpolation sets for the algebra $\fA(\Z)$ of minimal functions on $\Z$ as precisely the small sets. Below, we generalize this theorem to arbitrary countable groups.

We find it more convenient to use Boolean subalgebras of $\Omega = 2^G$ rather than closed subalgebras of $\ell^\infty(G)$. If $X \sub 2^G$ and $B \sub G$, we will say that $X$ \df{shatters} $B$ if for all $C \sub B$, there exists $x \in X$ such that $x|_B = \chi_C$. It follows from the previous section that $B$ is an $\fA$-interpolation set iff $\fB$ shatters $B$. One way to see this is the following. Recall that the algebra $\fA$ is naturally isomorphic to $C(E)$ where $E = E(M(G))$. Let $B \sub G$ and consider the algebra homomorphisms $C(E) \hookrightarrow \ell^\infty(G) \twoheadrightarrow \ell^\infty(B)$ (the second one being restriction). $B$ is an $\fA$-interpolation set exactly when the composition above is surjective. Dually, this holds iff the composition $\beta B \hookrightarrow \beta G \twoheadrightarrow E$ is injective. Dualizing again (using Stone duality this time), this holds iff the map $\fB \hookrightarrow \Omega \twoheadrightarrow 2^B$ is surjective, i.e., iff $\fB$ shatters $B$.

\begin{theorem}
  \label{th:Interpolation}
  Let $G$ be a countable, infinite, discrete group and let $B \sub G$. Then $B$ is shattered by $\fB$ iff $B$ is small. Moreover, if $B$ is small, there exists a minimal subshift $X \sub \Omega$ which shatters $B$.
\end{theorem}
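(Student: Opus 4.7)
The theorem is an equivalence supplemented by the stronger \emph{moreover} clause; I address each direction in turn.

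\emph{Forward direction ($\fB$ shatters $B \Rightarrow B$ is small).} I argue by contrapositive. Suppose $B$ is not small. Then $\overline{G \cdot 1_B} \sub \Omega$ contains a non-zero minimal point $u = 1_S$ with $S \sub G$ syndetic, and $u \in \fB$ since any minimal point does. After translation, assume $e_G \in S$. Choose a net $(h_\alpha)$ in $G$ with $h_\alpha \cdot 1_B \to u$; then $b_\alpha := h_\alpha$ lies in $B$ eventually, and $Bh_\alpha^{-1} \to S$ pointwise. Assuming shattering, for each $\alpha$ pick $z_\alpha \in \fB$ with $z_\alpha|_B = \chi_{\set{b_\alpha}}|_B$; the translates $v_\alpha := h_\alpha \cdot z_\alpha \in \fB$ satisfy $v_\alpha(e_G) = 1$ and $v_\alpha(k) = 0$ for $k \in Bh_\alpha^{-1} \setminus \set{e_G}$. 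The Boolean products $u \cdot v_\alpha \in \fB$ then converge pointwise on $G$ to the singleton characteristic function $1_{\set{e_G}}$. One upgrades this pointwise convergence to membership in $\fA = C(E(M(G)))$ using ultrafilter limits along a subnet refining $(h_\alpha)$ via the natural $\beta G$-action on $\fA$, producing an element of $\fA$ whose restriction to $G$ is $1_{\set{e_G}}$. This forces $\set{e_G}$ to be clopen in $E(M(G))$, contradicting the following fact: for every non-zero $f \in \fA(G)$, the set $\set{g \in G : |f(g)| > |f(e_G)|/2}$ is syndetic. (Indeed, pick an open neighborhood $U$ of $e_G$ in $E$ on which $|\tilde f| > |f(e_G)|/2$; by compactness of $E$, finitely many $G$-translates of $U$ cover $E$, hence $U \cap G$ is syndetic in $G$.) Thus $1_{\set{e_G}} \notin \fA$, and $B$ must be small.

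\emph{Backward direction and moreover ($B$ small $\Rightarrow \exists$ minimal subshift $X \sub \Omega$ shattering $B$).} I use a Baire category argument in the space $\cS(2) \sub K(\Omega)$ of closed subshifts from Section~\ref{sec:some-prop-clos}. For each finite $B_0 \sub B$ and each $C \sub B_0$, set
\[ Q(B_0, C) := \set{X \in \cS(2) : \exists x \in X,\ x|_{B_0} = \chi_C|_{B_0}}; \]
this is open in the Vietoris topology. The density of each $Q(B_0, C)$ is where smallness enters crucially: given a strongly irreducible $Y_0$ with witness $D$ and a finite ``pattern window'' $V$ specifying a Vietoris neighborhood, the set $T := \set{g : (V \cup D B_0) g \cap B = \emptyset}$ is syndetic by smallness of $B$, so one can graft the pattern $\chi_C|_{B_0}$ onto a strongly irreducible modification $Y_1$ of $Y_0$ which still agrees with $Y_0$ on $V$-patterns, via a construction modeled on that in the proof of Proposition~\ref{p:S-minimal-comeager}. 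Combined with the dense $G_\delta$ of minimal subshifts in $\cS(2)$ given by that same proposition, the Baire category theorem yields a minimal $X \in \bigcap_{B_0, C} Q(B_0, C)$; compactness of $2^B$ then upgrades the realization of all finite patterns $\chi_C|_{B_0}$ to the shattering of all of $B$.

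The main technical obstacles are: (i) in the forward direction, rigorously extracting the element of $\fA$ with restriction $1_{\set{e_G}}$ from the pointwise convergence in $\Omega$, since $\fB$ is not pointwise-closed; this requires semigroup-theoretic arguments in $\beta G$ and its action on $\fA = C(E(M(G)))$; and (ii) in the backward direction, verifying the density of $Q(B_0, C)$ via a grafting construction that preserves strong irreducibility within an arbitrary Vietoris neighborhood.
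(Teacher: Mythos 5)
Your forward direction does not work. The pointwise convergence $u\cdot v_\alpha\to 1_{\set{e_G}}$ is correct, but the ``upgrade'' to membership in $\fA$ is a genuine gap, not a technicality: $\fA$ is closed only in the uniform norm, and the $\beta G$-action does not preserve $C(E(M(G)))$ in the way you need, since $E(M(G))$ is merely right topological (for $p\in\beta G\setminus G$, neither $q\mapsto pq$ nor the pointwise limit of the translates $R_{g_\alpha}f$ along $g_\alpha\to p$ lands back in $C(E)$ in general). Worse, even granting $1_{\set{e_G}}\in\fA$, this is not a contradiction. The ``fact'' you invoke is false, and its proof is invalid: finitely many $G$-translates of an open $U\ni e_G$ cover $E(M(G))$ only if $E(M(G))$ is a minimal $G$-flow, which it is not (its minimal left ideals are proper subflows). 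In fact $1_{\set{e_G}}$ can lie in $\fB$: already for $G=\Z$, a Toeplitz flow with a single ``hole'' contains two points $x\neq y$ of the same minimal subshift with $x\oplus y=1_{\set{n}}$, whence $1_{\set{0}}=x\oplus y\in\fB$ after translation. This is why the correct argument (the paper defers to \cite{Glasner1983}) must use the full strength of shattering -- all subsets of $B$ simultaneously -- together with Furstenberg's restricted-set machinery and Bernoulli disjointness (Proposition~\ref{p:Furstenberg}), to show that every point of $\Omega$ would be restricted, contradicting $\fB\subsetneq\Omega$. Your argument uses one subset of $B$ at a time and never touches disjointness, so it cannot reach a contradiction.

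The backward direction has the right ingredients (markers placed off $B$ via smallness, plus the grafting of Proposition~\ref{p:S-minimal-comeager}) but is set up in the wrong space: the sets $Q(B_0,C)$ are \emph{not} dense in $\cS(A)$. For example, let $Y_0$ be the strongly irreducible subshift of maximal $D$-separated sets, take $B_0=\set{e_G,b}$ with $Db\cap D\neq\emptyset$ and $C=B_0$; then any $Z$ with $S_V(Z)=S_V(Y_0)$ for a window $V\supseteq B_0$ has $S_{B_0}(Z)\sub S_{B_0}(Y_0)$, which excludes the all-ones pattern. Note also that shattering concerns restrictions to the \emph{fixed} set $B$, not restrictions up to translation, so grafting $\chi_C$ at positions $g\in T$ far from $B$ does not realize $\chi_C$ on $B_0$ itself. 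The paper's remedy is to run the Baire category argument inside $\cS_B$, the closure of the strongly irreducible subshifts that already shatter $B$ (nonempty since $2^G$ belongs to it, and shattering is a closed condition, so the $Q$'s become vacuous there); all the work then goes into showing that minimality is dense in $\cS_B$, i.e.\ that the grafting can be done so as to \emph{preserve} shattering. Smallness is used exactly there: the marker set is built from a maximal $D^3$-separated set perturbed within $D$ so that the $V^5$-neighbourhoods of all markers miss $B$, giving $\phi(z,y)|_B=z|_B$. Your final compactness step, upgrading the realization of all finite patterns to shattering of all of $B$, is fine.
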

\begin{proof}
  The proof that if $\fB$ shatters $B$, then $B$ is small is carried out in the same way as in \cite{Glasner1983} and we do not repeat it here. This is the part of the proof that uses the fact that $\Omega$ is disjoint from all minimal flows and Furstenberg's results. The other direction does not directly generalize from \cite{Glasner1983} and we provide a proof using our techniques from Section~\ref{sec:some-prop-clos}. This also gives a new proof for $G = \Z$.
  
  Let $B\subseteq G$ be small and let $\mathcal{S}_B\subseteq K(2^G)$ be the closure of those strongly irreducible subshifts which shatter $B$. This collection is non-empty, as $2^G \in \cS_B$. Moreover, shattering $B$ is a closed condition, so every member of $\mathcal{S}_B$ shatters $B$. We will show that the minimal subshifts 
 in $\mathcal{S}_B$ 
 form a dense $G_\delta$ subset of $\mathcal{S}_B$. We have already seen in Proposition~\ref{p:S-minimal-comeager} that the minimal subshifts form a $G_\delta$ set. So we show that they are dense using a similar argument as in Proposition~\ref{p:S-minimal-comeager}.
	
	Fix $Z'$ a strongly irreducible $B$-shattering subshift, and fix $F\subseteq G$ finite. Find $V\subseteq G$ finite symmetric, containing $e_G$ and $u\in Z'$ such that $u|_V$ contains all $F$-patterns appearing in $Z'$. We also assume that $Z'$ is $V$-irreducible. As in the proof of Proposition~\ref{p:S-minimal-comeager}, we will find a subshift $Y$ with the property that members of $Y$ are at least $V^5$-separated and non-empty, and we will construct the map $\phi\colon Z'\times Y\to 2^G$ and the subshift $Z = \phi(Z'\times Y)$ just as before. However, we need to impose additional conditions on $Y$ in order to ensure that $Z$ shatters $B$.
	
	Since $B$ is small, there is a finite $D\subseteq G$ such that for every $g\in G$, there is $h\in G$ with $V^5h\subseteq Dg$ and $V^5h\cap B = \emptyset$. Let $X\subseteq 2^G$ be the subshift of maximal $D^3$-separated sets, which is strongly irreducible by Lemma~\ref{l:spaced-str-irred}. Now form the shift $D^G$ with alphabet $D$. We define $\psi\colon D^G\times X\to 2^G$ by
    \begin{equation*}
      \psi(w, x)(h) = 1 \iff \exists g \in G \quad x(g) = 1 \And h = w(g)g
    \end{equation*}
    and we set $Y = \mathrm{Im}(\psi)$. In words, the elements of $Y$ are formed by taking a maximal $D^3$-separated set and then moving each member by an element of $D$. By Lemma~\ref{l:strirred closure props}, $Y$ is strongly irreducible, so $Z$ as constructed above is strongly irreducible, and furthermore, every pattern in $S_F(Z) = S_F(Z')$ appears in every member of $Z$. 
	
	We now argue that $Z$ shatters $B$. Let $C\subseteq B$, and using the fact that $Z'$ shatters $B$, find $z\in Z'$ with $z|_B = \chi_C$. Let $x \in X$ be arbitrary. Define $w \in D^G$ as follows: for every $g$ such that $x(g) = 1$, find $h$ such that $V^5h \sub Dg\setminus B$ (this is possible by the choice of $D$) and set $w(g) = hg^{-1}$; if $x(g) = 0$, set $w(g)$ arbitrarily. Then let $y = \psi(w, x)$. By construction,
$y$ has the property that whenever $y(h) = 1$, we have $V^5h\cap B = \emptyset$. It follows that $\phi(z, y)|_B = z|_B = \chi_C$. 
\end{proof}
\vspace{2 mm}


\section{Minimal flows with large automorphism groups}
\label{sec:ManyAuts}

In this section, $G$ is again a countable, discrete group. Given a $G$-flow $Y$, we let $\mathrm{Aut}(Y, G)$ denote the group of $G$-flow automorphisms of $Y$ (that is, all homeomorphisms of $Y$ that commute with the action of $G$). We will generalize the tools from Section~\ref{sec:some-prop-clos} to produce $G$-flows $Y$ such that $\mathrm{Aut}(Y, G)$ embeds any compact metrizable group. Our main tool is a variant of the notion of a strongly irreducible subshift, where we allow the ``alphabet'' to be an arbitrary compact metric space (not necessarily zero-dimensional, as it was in Section~\ref{sec:some-prop-clos}).

Fix $(X, d)$ a compact metric space, and form the $G$-flow $X^G$, where $G$ acts by right shift. Write $\fin{G}$ for the collection of finite subsets of $G$. If $Y\subseteq X^G$ is a subflow and $F\in \fin{G}$, we define $S_F(Y)$, the \emph{$F$-patterns} of $Y$, to be the set $S_F(Y):= \{y|_F: y\in Y\}\subseteq X^F$. Notice that $S_F(Y)$ is compact, and a compatible metric on $S_F(Y)$ is given by 
$$d_F(\alpha_0, \alpha_1) = \max \{d(\alpha_0(f), \alpha_1(f)): f\in F\}.$$
In particular, we can view $S_F(Y)$ as an element of $K(X^F)$, the space of compact subsets of $X^F$ endowed with the Vietoris topology. A metric compatible with this topology is the Hausdorff metric given by 
$$\tilde{d}_F(K, L) := \max \big( \{d_F(y, L): y\in K\} \cup \{d_F(K, z): z\in L\} \big).$$ 
Let $\mathrm{Sub}(X^G)\subseteq K(X^G)$ be the collection of subflows of $X^G$. This is a compact subspace of $K(X^G)$. Another way to view the Vietoris topology on $\mathrm{Sub}(X^G)$ is as follows. The map 
$$\Omega\colon \mathrm{Sub}(X^G)\to \prod_{F\in \fin{G}} K(X^F)$$
given by $\Omega(Y) = (S_F(Y))_{F\in \fin{G}}$ is injective, and the Vietoris topology on $\mathrm{Sub}(X^G)$ makes $\Omega$ an embedding.

Given $\epsilon > 0$ and $F\in\fin{G}$, we say that $Y$ is \emph{$(\epsilon, F)$-minimal} if for every $y\in Y$, $\{(g\cdot y)|_F: g\in G\}$ is $\epsilon$-dense in $S_F(Y)$, meaning that for every $\alpha\in S_F(Y)$, there is $g\in G$ with $d_F(\alpha, (g\cdot y)|_F)< \epsilon$. Notice that $Y$ is $(\epsilon, F)$-minimal iff for some $V\in\fin{G}$ and every $y\in Y$, $\{g\cdot y: g\in G, Fg\subseteq V\}\subseteq S_F(Y)$ is an $\epsilon$-dense set. This is an open condition on $S_V(Y)$, so the $(\epsilon, F)$-minimal flows form an open subset of $\mathrm{Sub}(X^G)$. In particular, the minimal flows form a $G_\delta$ subset, as a flow is minimal iff it is $(\epsilon, F)$-minimal for every $F\in \fin{G}$ and every $\epsilon > 0$.	

We now come to the key definition of this section.
\begin{defn}
	\label{def:PreciselyIrred}
	We say that a subflow $Y\subseteq X^G$ is \emph{precisely irreducible} if there is $V\in \fin{G}$ such that for any $D_0, D_1\in \fin{G}$ which are $V$-apart and any $y_0, y_1 \in Y$, there is $y \in Y$ with $y|_{D_i} = y_i|_{D_i}$ for $i = 0, 1$.
  \end{defn}

  As with strong irreducibility, the requirement that $D_0$ and $D_1$ above be finite is not essential.

If $X$ is finite, the precisely irreducible subshifts are exactly the strongly irreducible ones; if $X$ is a Cantor space, being precisely irreducible is strictly stronger than being strongly irreducible in the sense of Section~5. The disadvantage of this notion compared with strong irreducibility is that the family of precisely irreducible subshifts is not closed under factors; however, this is the correct notion for our purposes here.

Let $\mathcal{S}$ denote the closure of the precisely irreducible subflows ($\cS$ is non-empty because $X^G$ is precisely irreducible). One can show that the minimal flows are dense in $\mathcal{S}$ and below we will prove an ``invariant'' version of this fact.

Let $\Gamma$ be a compact metrizable group and let $d$ be a compatible left-invariant metric on $\Gamma$. We can also endow $\Gamma^G$ with a $\Gamma$-flow structure, where $\Gamma$ acts by left multiplication on each coordinate. This action commutes with the $G$-action, so each $\gamma\in \Gamma$ acts as a $G$-flow automorphism of $\Gamma^G$. If $Y\subseteq \Gamma^G$ is a $G$-subflow which is also $\Gamma$-invariant, then $\Gamma$ also acts on $S_F(Y)$ by left multiplication for each $F\in \fin{G}$. Let $\mathcal{S}_\Gamma$ be the closure of those precisely irreducible subshifts which are also $\Gamma$-invariant; note that $\mathcal{S}_\Gamma\neq \emptyset$, since $\Gamma^G$ is a member. 

The following proposition is an analogue of Proposition~\ref{p:S-minimal-comeager} in this setting.
\begin{prop}
  \label{prop:minimal-dense}
  The minimal flows form a dense $G_\delta$ subset of $\mathcal{S}_\Gamma$.
\end{prop}	

\begin{proof}
  It suffices to show that for every $F \in \fin{G}$ and every $\eps > 0$, the set of $(\eps, F)$-minimal subshifts is open and dense in $\cS_\Gamma$.	As openness was previously discussed, we only show density. The proof mimics the proof of Proposition~\ref{p:S-minimal-comeager} but avoids the use of a ``Conf'' function. Fix $Y$ a precisely irreducible, $\Gamma$-invariant $G$-subflow. By enlarging $F$ and shrinking $\epsilon$ if needed, it is enough to find a precisely irreducible, $\Gamma$-invariant $G$-subflow $Z$ which is $(\epsilon, F)$-minimal and satisfies $\tilde{d}_F(S_F(Y), S_F(Z)) < \epsilon$. 
	
	Find $u\in Y$ and a symmetric $V\in \fin{G}$ with $F\subseteq V$ such that $\{(g\cdot u)|_F: g\in G, Fg\subseteq V\}$ is $\epsilon$-dense in $S_F(Y)$. We may assume that $V$ witnesses the precise irreducibility of $Y$. Define $Z\subseteq \Gamma^G$ by declaring that $z\in Z$ iff there is $y\in Y$ and a maximal $V^5$-separated set $B\subseteq G$ such that all of the following hold:
	\begin{enumerate}
		\item \label{i:md1}
		For every $g\in B$, there exists $\gamma \in \Gamma$ such that $(g\cdot z)|_V = \gamma\cdot u|_V$.
		\item \label{i:md2}
		We have $z|_{G\setminus V^3B} = y|_{G\setminus V^3B}$.
		\item \label{i:md3}
		For $g\in B$, we have $(g\cdot z)|_{V^5}\in S_{V^5}(Y)$.
	\end{enumerate}
	By the precise irreducibility and $\Gamma$-invariance of $Y$, given $y \in Y$ and $B$ as above, there exists $z \in Y$ which satisfies \ref{i:md1} and \ref{i:md2} (and a fortiori \ref{i:md3}). Thus $Z \neq \emptyset$. $Z$ is closed because it is obtained from the closed conditions \ref{i:md1}--\ref{i:md3} by projecting over the compact sets $Y$ and the subshift of maximal $V^5$-separated subsets of $G$. Also, $Z$ is clearly invariant under the actions of both $G$ and $\Gamma$.
    Conditions \ref{i:md2} and \ref{i:md3} ensure that $S_F(Z)\subseteq S_F(Y)$ and the choice of $u$ and \ref{i:md1} ensure that $S_F(Z)$ is $\eps$-dense in $S_F(Y)$. Finally, $Z$ is $(\eps, F)$-minimal by the choice of $u$ and the fact that the sets $B$ are syndetic.
	
	Next we show that $Z$ is precisely irreducible with witness $V^{20}$. Suppose $D_0, D_1 \in \fin{G}$ are $V^{20}$-apart and let $z_i\in Z$, $i = 0, 1$. Find $y_i\in Y$ and maximal $V^5$-separated sets $B_i\subseteq G$ which witness that $z_i\in Z$. Let $C_i = B_i \cap V^5D_i$ and let $D_i' = D_i\cup V^5C_i\subseteq V^{10}D_i$.  Enlarge $C_0\cup C_1$ to a maximal $V^5$-separated set $B\subseteq G$. Using the precise irreducibility of $Y$, find $y\in Y$ with $y|_{D_i'} = y_i|_{D_i'}$ and such that for $g\in B\setminus (D_0'\cup D_1')$, we have $g\cdot y|_V = u|_V$.
	Finally, define $z \in Z$ by $z|_{D_i'} = z_i|_{D_i'}$ and $z|_{G \sminus (D_0' \cup D_1')} = y|_{G \sminus (D_0' \cup D_1')}$. One readily checks that this $z$ verifies the conditions \ref{i:md1}--\ref{i:md3} with $B$ and $y$.    
\end{proof}	
\vspace{1 mm}

Now let $\Gamma = \prod_n U(n)$, where $U(n)$ is the $n$-dimensional unitary group. By the Peter--Weyl theorem, $\Gamma$ embeds every compact metrizable group. Thus we obtain a minimal $G$-flow $Y$ such that $\mathrm{Aut}(Y, G)$ embeds every compact metrizable group. 

We can push this even further by noting that the analogues of Lemma~\ref{l:SDOP-implies-disj-str-irred}, Corollary~\ref{c:disjointness}, and Corollary~\ref{c:continuum-many-disjoint} hold for $\mathcal{S}_\Gamma$. (The only proof which differs from the one in Section~\ref{sec:some-prop-clos} is the one for essential freeness; we prove this in Lemma~\ref{l:PreciseFree} below.) Let $\{Y_i: i< \mathfrak{c}\}\subseteq \mathcal{S}_\Gamma$ be minimal flows such that the product $Y = \prod_i Y_i$ is minimal. Then the group $\Gamma^{\mathfrak{c}}$ embeds in $\mathrm{Aut}(Y, G)$ and as every automorphism of $Y$ lifts to an automorphism of the universal minimal flow $M(G)$ (see \cite{Auslander1988}*{Chapter~10, Exercise~1}), we obtain the following.

\begin{theorem}
	\label{th:ManyAuts}
	Let $G$ be a countable, infinite group. Then $\mathrm{Aut}(M(G), G)$ has cardinality $2^\mathfrak{c}$, the largest possible cardinality. In particular, $M(G)$ is not proximal.
\end{theorem}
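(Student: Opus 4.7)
The plan is to execute the strategy announced in the paragraph preceding the statement. Set $\Gamma = \prod_{n\ge 1} U(n)$, which by Peter--Weyl contains every compact metrizable group as a closed subgroup and has cardinality $\mathfrak{c}$. The goal is to produce continuum many pairwise disjoint minimal $G$-flows in $\cS_\Gamma$, take their product, and exploit the natural $\Gamma^{\mathfrak{c}}$-action on the product to manufacture many automorphisms of $M(G)$. The main technical input is to port the results of Sections~\ref{sec:some-prop-clos}--\ref{sec:bern-disj-count} to $\cS_\Gamma$: namely, (a) an analogue of Lemma~\ref{l:SDOP-implies-disj-str-irred} stating that every precisely irreducible $\Gamma$-invariant subflow of $\Gamma^G$ is disjoint from every minimal $G$-flow, proved by combining the SCP from Theorem~\ref{th:UncountableGroups} with the exact patching afforded by precise irreducibility; (b) the essential-freeness-is-comeager statement for $\cS_\Gamma$, which is the promised Lemma~\ref{l:PreciseFree}; (c) the analogue of Proposition~\ref{p:disjointness} (whose proof uses only compactness, not finiteness, of the alphabet); and (d) perfectness of $\cS_\Gamma$, which follows from the argument of Lemma~\ref{l:SA-perfect} once (a) is in hand, together with Proposition~\ref{prop:minimal-dense}. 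With these ingredients, a Kuratowski--Ulam plus Mycielski argument, exactly as in the proof of Corollary~\ref{c:continuum-many-disjoint}, yields a family $\{Y_i : i<\mathfrak{c}\}\subseteq\cS_\Gamma$ of mutually disjoint, minimal, metrizable, essentially free, $\Gamma$-invariant $G$-flows.

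Set $Y = \prod_{i<\mathfrak{c}} Y_i$; mutual disjointness makes $Y$ a minimal $G$-flow. Coordinate-wise left multiplication by elements of $\Gamma^{\mathfrak{c}}$ commutes with the $G$-shift and is faithful, because left multiplication by any non-identity $\gamma\in\Gamma$ is fixed-point-free on $\Gamma^G$ and hence on each non-empty $Y_i$. So $\Gamma^{\mathfrak{c}}\hookrightarrow\mathrm{Aut}(Y, G)$, and the lifting statement \cite{Auslander1988}*{Chapter~10, Exercise~1} cited in the excerpt injects $\mathrm{Aut}(Y, G)$ into $\mathrm{Aut}(M(G),G)$: distinct $\phi$'s on $Y$ force distinct lifts $\tilde\phi$, since any lift satisfies $\pi\circ\tilde\phi = \phi\circ\pi$ with $\pi\colon M(G)\to Y$ surjective. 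Since $|\Gamma^{\mathfrak{c}}| = \mathfrak{c}^{\mathfrak{c}} = 2^{\mathfrak{c}}$, this yields $|\mathrm{Aut}(M(G),G)|\ge 2^{\mathfrak{c}}$. For the matching upper bound, note that $M(G)\subseteq \beta G$, so $|M(G)|\le 2^{\mathfrak{c}}$, and any $G$-equivariant continuous self-map of a minimal $G$-flow is determined by its value at a single point (density of every orbit plus continuity); hence $|\mathrm{Aut}(M(G),G)|\le |M(G)|\le 2^{\mathfrak{c}}$.

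Non-proximality is then an immediate corollary of the existence of any non-identity $\phi\in\mathrm{Aut}(M(G), G)$: if $M(G)$ were proximal, choose any $x\in M(G)$ and a net $(g_j)$ in $G$ with $g_jx$ and $g_j\phi(x) = \phi(g_jx)$ both converging to some $y$; continuity gives $\phi(y)=y$, so the fixed-point set of $\phi$ is a non-empty closed $G$-invariant subset of $M(G)$, hence all of $M(G)$ by minimality, contradicting $\phi\neq\mathrm{id}$. The main obstacle is item (a): the disjointness arguments of Sections~\ref{sec:bern-disj-sparse}--\ref{sec:bern-disj-count} were phrased using basic clopen cylinders over a zero-dimensional alphabet, and one must check that precise irreducibility --- strictly stronger than strong irreducibility in the non-zero-dimensional setting, precisely because it demands exact rather than $\epsilon$-approximate patching --- supplies enough combinatorial room for the proof of Lemma~\ref{l:SDOP-implies-disj-str-irred} to go through essentially unchanged in $\cS_\Gamma$.
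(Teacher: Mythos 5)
Your proposal is correct and follows essentially the same route as the paper: port the disjointness machinery of Sections~\ref{sec:some-prop-clos}--\ref{sec:bern-disj-count} to $\cS_\Gamma$ with $\Gamma=\prod_n U(n)$, obtain continuum many mutually disjoint minimal $\Gamma$-invariant flows, embed $\Gamma^{\mathfrak c}$ into the automorphism group of their product, and lift to $M(G)$, with non-proximality following from the existence of a non-trivial automorphism. The extra details you supply (the upper bound $|\mathrm{Aut}(M(G),G)|\le 2^{\mathfrak c}$, injectivity of the lifting, and faithfulness of the $\Gamma^{\mathfrak c}$-action) are exactly the routine verifications the paper leaves implicit.
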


That $M(G)$ is not proximal follows from the fact that it has non-trivial automorphisms: if $\gamma \in \Aut(M(G), G)$, then for any point $x \in M(G)$ such that $\gamma \cdot x \neq x$, $x$ and $\gamma \cdot x$ are not proximal.

With a bit more work, we can also generalize a recent result of Cortez and Petite~\cite{Cortez2018}. There, the authors show that every countable, residually finite group can be realized as a subgroup of $\mathrm{Aut}(Y, \Z)$ for some minimal, free $\Z$-flow $Y$ with $Y$ a Cantor set. We generalize this in Theorem~\ref{th:CantorAuts} but first we deal with essential freeness for flows in $\cS_\Gamma$.
\vspace{1 mm}

\begin{lemma}
	\label{l:PreciseFree}
	Let $X$ be a compact metric space with no isolated points. Then the essentially free flows are dense $G_\delta$ in $\mathcal{S}$. If $\Gamma$ is an infinite compact metrizable group, then the same holds for $\mathcal{S}_\Gamma$.
\end{lemma}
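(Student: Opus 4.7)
The plan has two independent parts: show that essential freeness is a $G_\delta$ condition on $\mathrm{Sub}(X^G)$ in the Vietoris topology, and then show it is dense in both $\mathcal{S}$ and $\mathcal{S}_\Gamma$. For density, the idea is to inflate a given precisely irreducible subflow $Z'$ by a small $\delta$ in each coordinate, producing a nearby precisely irreducible and essentially free subflow which is automatically $\Gamma$-invariant whenever $Z'$ is.

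For the $G_\delta$ assertion, a subshift $Y$ is essentially free precisely when, for every $g \in G\setminus\{e\}$ and every non-empty basic open $B \sub X^G$ meeting $Y$, some $y \in B \cap Y$ satisfies $gy \neq y$. Fix a countable basis $\{B_n\}$ of $X^G$ by open balls together with, for each $n$, a sequence of shrunken closed balls $\overline{B_n^\eta}\sub B_n$ (indexed by positive rationals $\eta$) whose union is $B_n$. Essential freeness of $Y$ then translates into membership in
$$\bigcap_{g\neq e,\,n,\,\eta}\bigl(\{Y':Y'\cap\overline{B_n^\eta}=\emptyset\}\cup\{Y':Y'\cap(B_n\cap\{y:gy\neq y\})\neq\emptyset\}\bigr).$$
Each summand is Vietoris-open (a miss set for a closed set and a hit set for an open set, respectively), and the outer index is countable since $G$ is, so this is a $G_\delta$ subset.

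For density, given a precisely irreducible $Z'$ with witness $V$ (also $\Gamma$-invariant in the $\mathcal{S}_\Gamma$ case) and $\delta>0$, set
$$Z=\{z\in X^G:\exists z'\in Z',\ \forall h\in G,\ d(z(h),z'(h))\leq\delta\}.$$
Then $Z$ is a closed $G$-invariant (and $\Gamma$-invariant, by left-invariance of $d$) subflow containing $Z'$. Precise irreducibility of $Z$ with witness $V$ follows by gluing: given $V$-apart finite $D_0,D_1$ and $z_i\in Z$ with witnesses $z_i'\in Z'$, precise irreducibility of $Z'$ produces $z''\in Z'$ agreeing with $z_i'$ on $D_i$; setting $z=z_i$ on $D_i$ and $z=z''$ elsewhere, the same $z''$ witnesses $z\in Z$. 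Moreover $Z'\sub Z$ and every pattern in $S_F(Z)$ lies within $\delta$ of its witness restricted to $F$, so $\tilde{d}_F(S_F(Z),S_F(Z'))\leq\delta$ on every finite window, and small $\delta$ places $Z$ in any prescribed Vietoris-neighborhood of $Z'$. For essential freeness of $Z$: given a basic open $B$ with support $F$ meeting $Z$ and $g\neq e$, choose $y\in B\cap Z$ with witness $y'$ and $h\in G$ with $h,hg\notin F$; then replace $y(hg)$ by any point of the closed $\delta$-ball around $y'(hg)$ distinct from $y(h)$. Such a point exists because the no-isolated-points hypothesis makes every closed $\delta$-ball infinite. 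The modified sequence $y^*$ retains the witness $y'$, still lies in $B$, and satisfies $(gy^*)(h)=y^*(hg)\neq y^*(h)$.

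The main obstacle is arranging precise irreducibility and essential freeness to coexist in one construction: the former demands exact agreement on arbitrary finite $D_i$, while the latter demands a genuinely distinct nearby value at a single coordinate. The $\delta$-thickening accommodates both at once, since gluing can be carried out inside the thickened set by choosing a common witness in $Z'$ and keeping all coordinates within $\delta$ of it, while the no-isolated-points hypothesis supplies the needed single-coordinate perturbation. Every infinite compact metrizable group $\Gamma$ is perfect (an isolated point at $e_\Gamma$ would make $\Gamma$ discrete and hence finite), so the identical argument handles $\mathcal{S}_\Gamma$.
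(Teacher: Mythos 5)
Your proof is correct and follows essentially the same route as the paper: the same $\delta$-thickening $Z$ of a precisely irreducible $Z'$ for density (with $\Gamma$-invariance coming from left-invariance of $d$), and the same reformulation of essential freeness as a countable intersection of Vietoris-open conditions for the $G_\delta$ part. You simply spell out details the paper leaves implicit, such as the gluing argument showing the thickening remains precisely irreducible with the same witness, the single-coordinate perturbation using the absence of isolated points, and the observation that an infinite compact metrizable group is perfect.
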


\begin{proof}
	Notice that $Y\subseteq X^G$ is essentially free iff for every $g\in G$, every $F\in \fin{G}$, every $\epsilon > 0$, and every $\epsilon$-ball $U\subseteq S_F(Y)$, there is $y\in Y$ with $y|_F\in U$ and $g\cdot y\neq y$. For fixed $g$, $F$, $\epsilon$, and $U$, this is an open condition, so being essentially free is $G_\delta$.
	
	To show that being essentially free is dense, suppose that $Y$ is precisely irreducible with witness $V\in \fin{G}$. Now let
	$$Z = \{z\in X^G: \exists y\in Y \,\,\forall g\in G\,\,\, d(y(g), z(g))\leq \epsilon\}.$$
	Then $Z$ is essentially free (because $X$ has no isolated points) and precisely irreducible, also with witness $V$.
	
	In the case concerning $\mathcal{S}_\Gamma$ (so $X = \Gamma$), the exact same proof works, as the flow $Z$ considered above is $\Gamma$-invariant as long as $Y$ and $d$ are.
\end{proof}
\vspace{1 mm}

\begin{theorem}
	\label{th:CantorAuts}
	Let $G$ be a countable infinite group, and let $H$ be any countable maxap group. Then there is a minimal, free $G$-flow $Y$ with $Y$ a Cantor set such that $H$ embeds into $\mathrm{Aut}(Y,G)$.
\end{theorem}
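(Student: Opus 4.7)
Since $H$ is countable and maxap, fix an embedding $H \hookrightarrow \Gamma$ into a compact metrizable group. The plan is to refine the framework of this section so that the ambient flow lives over a zero-dimensional alphabet, forcing the resulting $Y$ to be a Cantor set. To this end, I would produce a Cantor $H$-space $C$ on which $H$ acts continuously and faithfully, using the maxap structure: build $C$ as an inverse limit of finite $H$-sets refining a basis for the topology of $\Gamma$ (a simpler but less structured alternative is $C = 2^H$ with the shift action). The $G$-flow $C^G$ then carries commuting actions of $G$ by shift on the $G$-coordinate and of $H$ coordinate-wise.

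Next I would consider the Polish space $\mathcal{S}_H$ obtained as the closure in $\mathrm{Sub}(C^G)$ of the precisely irreducible, $H$-invariant $G$-subflows of $C^G$. The full shift $C^G$ belongs to $\mathcal{S}_H$, so this space is non-empty. Following the template of Proposition \ref{prop:minimal-dense} and Lemma \ref{l:PreciseFree}, I would show that among members of $\mathcal{S}_H$ the following sets are dense $G_\delta$: the minimal flows, the flows on which $G$ acts freely, and the flows on which $H$ acts faithfully. Density of minimality is obtained by carrying out the cut-and-paste construction of Proposition \ref{prop:minimal-dense} in an $H$-equivariant way; density of $G$-freeness uses the $\epsilon$-thickening from the proof of Lemma \ref{l:PreciseFree} together with the fact that $C$ has no isolated points; density of $H$-faithfulness uses precise irreducibility to implant, for each prescribed $h \neq e_H$, a point $y \in Y$ with $hy \neq y$.

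A Baire category argument then yields $Y \in \mathcal{S}_H$ realizing all three properties simultaneously. Such a $Y$ is an infinite, minimal closed subset of the Cantor space $C^G$, hence itself a Cantor set, and the $H$-action restricts to a faithful action on $Y$ by $G$-flow automorphisms, giving the required embedding $H \hookrightarrow \mathrm{Aut}(Y, G)$.

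The main obstacle I anticipate is making the gluing arguments of Proposition \ref{prop:minimal-dense} and Lemma \ref{l:PreciseFree} fully $H$-equivariant without destroying precise irreducibility. In those proofs one freely chooses auxiliary patterns and glues them in on a syndetic ``spaced'' set of positions; in the equivariant setting the glued-in patterns must be chosen in full $H$-orbits, and this is where the careful choice of the Cantor $H$-space $C$ coming from the maxap embedding $H \hookrightarrow \Gamma$ should play its role.
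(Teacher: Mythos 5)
Your overall strategy (Baire category in a space of precisely irreducible, invariant subflows, then reading off a Cantor minimal flow with $H$ in its automorphism group) is in the spirit of the paper, but there are two genuine gaps. The first concerns freeness. You assert that the subflows on which $G$ acts \emph{freely} form a dense $G_\delta$ in your space $\mathcal{S}_H$, but the techniques of this section only give \emph{essential} freeness: the $\epsilon$-thickening in Lemma~\ref{l:PreciseFree} produces a flow $Z$ that \emph{contains} the original $Y$, so any $g$-fixed point of $Y$ survives into $Z$, and nothing in the construction removes fixed points. Density of genuine freeness is not established anywhere and cannot be had for free. The paper instead settles for an essentially free minimal $X\in\mathcal{S}_\Gamma$, passes to the universal highly proximal extension $\tS_G(X)$, which is free by Frol\'ik's theorem (Proposition~\ref{p:EssFreeToFree}), and then recovers metrizability and the Cantor property by taking the Stone space of a countable $G$- and $H$-invariant subalgebra $B_0$ of $\mathrm{Clop}(\tS_G(X))$ containing, for each $g\in G$, a clopen partition witnessing freeness. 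This detour through $\tS_G(X)$ is exactly the step your proposal omits, and it is also where the countability of $H$ (rather than of $\Gamma$) is used, to keep $B_0$ countable.

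The second gap is in your choice of alphabet. An inverse limit of finite $H$-sets on which $H$ acts faithfully forces $H$ to embed into a product of finite symmetric groups, i.e., to be residually finite; this fails for general maxap groups such as $\Q$. The fallback $C=2^H$ makes no use of the maxap hypothesis and runs straight into the obstacle you yourself flag: to keep the gluing of Proposition~\ref{prop:minimal-dense} equivariant and closed, the implanted patterns must form a closed invariant set, and orbits of a countable group acting on pattern space are not closed (their closures may contain degenerate configurations, threatening faithfulness and precise irreducibility). The paper sidesteps all of this by taking the alphabet to be the compact group $\Gamma$ itself with the left translation action: condition (i) of Proposition~\ref{prop:minimal-dense} ("$(g\cdot z)|_V=\gamma\cdot u|_V$ for some $\gamma\in\Gamma$") is then closed because $\Gamma$ is compact, and the coordinatewise $\Gamma$-action on any non-empty subflow is automatically free, so faithfulness of $H$ needs no separate density argument. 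Zero-dimensionality is not imposed on the alphabet at all; it is recovered only at the end via the Stone space of $B_0$. If you repair your argument by (a) working over the alphabet $\Gamma$, (b) aiming only for essential freeness in the Baire category step, and (c) adding the $\tS_G$ plus countable-subalgebra step, you arrive at the paper's proof.
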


\begin{proof}
We may suppose that $H\subseteq\Gamma$ with $\Gamma$ a compact metrizable group. Let $X\in \mathcal{S}_\Gamma$ be a minimal, essentially free flow. Then by Proposition~\ref{p:EssFreeToFree}, $\tS_G(X)$ is a minimal, free $G$-flow. Let $B = \mathrm{Clop}(\tS_G(X))$ be the complete Boolean algebra of clopen subsets of $\tS_G(X)$, or equivalently, the regular open subsets of $X$. Notice that $\mathrm{Aut}(X,G)$ acts on $B$ in the obvious way, so $\Gamma$ embeds into $\mathrm{Aut}(\tS_G(X), G)$. Form a subalgebra $B_0\subseteq B$ with the following properties:
\begin{itemize}
	\item 
	$B_0$ is countable.
	\item 
	$B_0$ is $G$-invariant and $H$-invariant.
	\item 
	For each $g\in G$, there is a clopen partition $\tS_G(X) = \bigsqcup_{i<n} A_i$ with each $A_i\in B_0$ so that for each $i< n$, we have $gA_i\cap A_i = \emptyset$.
\end{itemize} 
Then letting $Y$ to be the Stone space of $B_0$, we see that $Y$ is a Cantor set and a minimal, free $G$-flow such that $H$ embeds into $\mathrm{Aut}(Y,G)$.
\end{proof}
\vspace{1 mm}

We end with the following question: is there for every countable, infinite group $G$ a non-trivial minimal incontractible flow? Equivalently, is there a non-trivial minimal flow which is disjoint from every minimal proximal flow?

\bibliography{BDJ}
\end{document}